\newcommand{\aaa}{{\mathcal A}}
\newcommand{\fff}{{\mathcal F}}
\newcommand{\eee}{{\mathcal E}}
\newcommand{\ppp}{{\mathcal P}}
\newcommand{\sss}{{\mathcal S}}
\newcommand{\rrr}{{\mathcal R}}
\newcommand{\vvv}{{\mathcal V}}
\newcommand{\ilim}{\varprojlim}
\newcommand{\poly}{\Delta}
\newcommand{\T}{{\bf T}}
\newcommand{\R}{{\mathbb R}}
\newcommand{\N}{{\mathbb N}}
\newcommand{\Z}{{\mathbb Z}}
\newcommand{\C}{{\mathbb C}}
\newcommand{\vecv}{{\vec{v}}}
\newtheorem{thm}{Theorem}[section]
\newtheorem*{thm*}{Theorem}
\newtheorem{cor}[thm]{Corollary}
\newtheorem{lem}[thm]{Lemma}
\newtheorem{prop}[thm]{Proposition}
\newtheorem{dfn}[thm]{Definition}
\theoremstyle{definition}
\newtheorem{ex}[thm]{Example}
\numberwithin{equation}{section}
\begin{document}
\title{Fusion: a general framework for hierarchical tilings of $\R^d$}
\author{Natalie Priebe Frank and Lorenzo Sadun}

\address{Natalie Priebe Frank\\Department of Mathematics\\Vassar
  College\\Poughkeepsie, NY 12604} \email{nafrank@vassar.edu}
\address{Lorenzo Sadun\\Department of Mathematics\\The University of
  Texas at Austin\\ Austin, TX 78712} \email{sadun@math.utexas.edu}
\thanks{The work of the second author is partially supported by NSF
  grants DMS-0701055 and DMS-1101326} 
\subjclass[2010]{Primary: 37B50 Secondary: 52C23,
  37A25, 37B10} 
\keywords{Self-similar, substitution, mixing,
  dynamical spectrum, invariant measures} 
\date{June 29, 2012}

\begin{abstract}
We introduce a formalism for handling general spaces of hierarchical
tilings, a category that includes substitution tilings, 
Bratteli-Vershik systems, S-adic
transformations, and multi-dimensional cut-and-stack
transformations. We explore ergodic, spectral and topological
properties of these spaces. We show that familiar properties of
substitution tilings carry over under appropriate assumptions, and
give counter-examples where these assumptions are not met. For
instance, we exhibit a minimal tiling space that is not uniquely ergodic, with one
ergodic measure having pure point spectrum and another ergodic measure
having mixed spectrum.  We also  exhibit a 2-dimensional tiling space that has pure point
measure-theoretic spectrum but is topologically weakly mixing.
\end{abstract}

\maketitle

\setlength{\baselineskip}{.6cm}

\setcounter{tocdepth}{3}
\makeatletter
\def\l@subsection{\@tocline{2}{0pt}{25pt}{5pc}{}}
\def\l@subsubsection{\@tocline{2}{0pt}{50pt}{5pc}{}}
\makeatother
\tableofcontents

\section{Introduction}
Hierarchical structures are ubiquitous in the real world. Typically
there are a finite number of levels, ranging from the tiny (say,
subatomic particles) to the huge (say, clusters of galaxies). In many
cases the smallest level is so small that it makes sense to extrapolate
mathematically to infinitely small hierarchical structures -- fractals. 
In this paper we consider the complementary situation where the smallest 
scale may not be small, but the largest scale is so large that it makes sense
to extrapolate to infinite size.

There is an extensive literature devoted to expanding hierarchies,
dating back to the 1800s \cite{Prouhet}, with applications to dynamics
dating back to the early 1900s \cite{Morse}.  Most of the aperiodic
sets of tiles that were discovered over the years, from
Berger \cite{Berger} to Robinson \cite{R.Robinson} to
Penrose \cite{Penrose} to Goodman-Strauss \cite{Chaim.annals} and
others, used hierarchy as means of proving aperiodicity. Tiles group
into clusters that group into larger clusters, etc., so that the
resulting patterns exhibit structure at arbitrarily large length
scales and cannot be periodic.

In most of the literature, it is assumed that the hierarchies have
essentially the same structure at each level, so that the system can
be described by a single substitution map. Indeed, there has been tremendous
progress on substitution sequences,
substitution subshifts, and substitution tilings.
However, there is much to be said
about hierarchical systems where the structure is {\em not} necessarily 
repeated at each level.

The idea of studying general hierarchical systems can be seen in the
cut-and-stack formalism of ergodic theory.  The first example of
Chacon \cite{Chacon}, which exhibited a weakly mixing system that was
not strongly mixing, is a fusion of the sort discussed in this paper.
Over the years the technique has been used to construct many
interesting examples, and it has been shown \cite{AOW} that all
interval exchange transformations, and indeed all aperiodic measure
preserving transformations, can be obtained by cutting and stacking.
Cutting and stacking has been generalized to higher dimensions for
$\Z^d$ actions \cite{Rudolph, Aimee-Ayse}, for $\R^d$-actions on
rectangular domains \cite{Cohen}, and for general locally compact
second countable groups \cite{Danilenko-Silva} and amenable groups.
Progress has recently been made on nonstationary Bratteli-Vershik
systems \cite{Fisher, FFT, BKMS}, most of which can be viewed as a discrete
1-dimensional version of the fusion tilings described in this paper
\cite{BJS}.

This paper provides a framework for studying
the ergodic theory and topology of hierarchical tilings. 
Our formalism encompasses, among other things, 
substitution tilings and substitution subshifts,
cut-and-stack transformations, S-adic transformations \cite{Durand},
and stationary and non-stationary Bratteli-Vershik systems
\cite{Fisher, BKMS}.

Taken to extremes, our formalism can be made {\em too} general. 
Without simplifying assumptions, essentially any tiling space
can be viewed as a fusion, and almost any sort of dynamical behavior is 
possible. For instance, Jewett \cite{Jewett} and Krieger \cite{Krieger}
showed that any ergodic measurable automorphism of a non-atomic 
Lebesgue space system can
be realized topologically as a uniquely ergodic map on a Cantor set; in
most cases these can be viewed as subshifts, and hence as fusion 
tiling spaces.  Downarowicz \cite{Down} showed that there exist Cantor
dynamical systems whose invariant measures match an arbitrary Choquet
simplex. 

In this paper 
we identify appropriate hypotheses that preserve the essential
properties of substitutions while applying to more general systems. 
Certain properties, like minimality or unique
ergodicity, hold under very general conditions. Others, like finitely
generated (rational \v Cech) cohomology or pure point spectrum or (on
the other extreme) topological weak mixing, require stronger assumptions. 

In addition, we develop a number of 
examples that show how these properties can be lost 
when the assumptions are too weak. We hope that these examples will help 
to classify fusion tilings, and to better organize our understanding of
tilings in general. 

Some of our proofs are quite simple, yet determining how to apply
the techniques of substitution systems to fusions is far from
trivial.  The key tools for studying substitution systems are
Perron-Frobenius theory and the existence of a self-map that can be
iterated arbitrarily many times.  Neither of these work for general
fusions.
The new methods devised in this paper provide us with more insight
into how properties of tiling {\em spaces} are related to properties
of {\em tilings}.  Some properties of a hierarchical tiling space are
directly related to the geometry of the individual tiles.  Others come
from the details of how the tiles are assembled into bigger and bigger
clusters. Still others can be deduced from coarser numerical data,
such as from the matrices that count how many of each kind of tile
appear in each kind of cluster.  Because the hierarchy in fusion rules
is less rigid than that of their substitutive counterparts,
combinatorics, geometry, algebra, and topology can have effects that
need to be teased apart.  The challenge is to understand which
properties come from which information, and to organize that
information effectively.

\noindent{\bf Acknowledgments.} We thank Mike Boyle, Lewis Bowen, 
Kariane Calta, Amos Nevo, E.~Arthur Robinson, Jr. and Boris Solomyak
for helpful discussions.
The work of L.S.~ 
is partially supported by 
NSF grants DMS-0701055 and DMS-1101326. 

\section{Definitions}

In this work a {\em prototile} is a labelled, closed topological disk
in $\R^d$.  The label, which can be
thought of as a color or a marking,
is necessary when we wish to
distinguish between prototiles that are geometrically similar.  In
general we assume that we have a finite set $\ppp$ of prototiles
to use as building blocks for our tilings.  (This
assumption is useful but not entirely necessary. In a separate work \cite{ILC}
we consider tilings built from an infinite but compact set $\ppp$.)  We also assume that
we have fixed a closed subgroup $G$ of the Euclidean group $E(d)$ that
contains a full rank lattice of translations; this group $G$ will be
used to construct our tiles, patches, and tilings and can also
serve as the group action of our dynamical system.  (The two standard
translation subgroups that appear in tiling theory are $\Z^d$ and
$\R^d$.) It is possible to act on a prototile by an isometry in $G$ by
applying the isometry to the closed set defining the prototile and
carrying the labelling information along unchanged.  A prototile which
has been so moved is called a {\em tile}.  We will abuse notation by
denoting the application of an isometry $g \in G$ to a prototile $p$
as $g(p)$; when the isometry is translation by $\vecv \in \R^d$ we
denote the translated tile by $p + \vecv$.  A {\em $\ppp$-patch} (or 
{\em patch}, for short) of
tiles is a connected, finite union of tiles that only overlap on their
boundaries; the {\em support} of the patch is the closed set in $\R^d$
that it covers.  Two tiles or patches are considered {\em equivalent}
or {\em copies} of one another if there is an element of $G$ taking
one to the other.  A {\em tiling} $\T$ of $\R^d$ is a collection of tiles that
completely cover $\R^d$ and overlap only on their boundaries.

A tiling is said to have {\em finite local complexity (FLC)} with respect to 
the group $G$ if it contains only finitely many connected two-tile patches up 
to motions from $G$.  Most of the literature
on tiling dynamical systems uses finite local complexity as a key
assumption. This work in this paper is limited to FLC fusion tilings. Fusion tilings with  infinite local
complexity (ILC) will be considered in \cite{ILC}.

%

\subsection{Fusion tilings} Given two $\ppp$-patches $P_1$ and $P_2$
and two isometries $g_1$ and $g_2$ in $G$, if the patches $g_1(P_1)$
and $g_2(P_2)$ overlap only on their boundaries, and if the union $g_1(P_1)\cup
g_2(P_2)$ forms a $\ppp$-patch, we call that union the {\em fusion} of
$P_1$ to $P_2$ via $g_1$ and $g_2$.  When we do not wish to specify
the isometries we may call it a fusion of $P_1$ to $P_2$.
Notice that there will be many ways to fuse two patches together and
that we may attempt to fuse any finite number of patches together.  We
may even fuse a patch to copies of itself.  Patch fusion is simply a
version of concatenation for geometric objects.

The idea behind a ``fusion rule'' is an analogy to an atomic model: we
have atoms, and those atoms group themselves into molecules, which
group together into larger and larger structures.  In this analogy we
think of prototiles as atoms and patches as molecules.  Let $\ppp_0 =
\ppp$ be our prototile set, our ``atoms''.  The first set of
``molecules'' they form will be defined as a set of finite
$\ppp$-patches $\ppp_1$, with notation $\ppp_1 = \{P_1(1), P_1(2),
..., P_1(j_1)\}$.  Next we construct the structures made by these
``molecules'': the set $\ppp_2$ will be a set of finite patches that
are fusions of the patches in $\ppp_1$.  That is $\ppp_2 = \{P_2(1),
P_2(2), ..., P_2(j_2)\}$ is a set of patches, each of which is a
fusion of patches from $\ppp_1$.  While the elements of $\ppp_2$
are technically $\ppp$-patches, we can also think of them 
as $\ppp_1$-patches by considering the elements of $\ppp_1$ as
prototiles.  We continue in this fashion, constructing $\ppp_3$ as a
set of patches that are fusions of patches from $\ppp_2$ and in
general constructing $\ppp_n$ as a set of patches which are fusions of
elements of $\ppp_{n-1}$.
The elements of $\ppp_n$ are called  
$n$-fusion supertiles or {$n$-supertiles}, for 
short.\footnote{\label{labeled.supertiles}If we wish, 
we can also add labels to the supertiles, so that the information carried in   
an $n$-supertile is more than just its composition as a patch in a tiling.
This generalization is useful for collaring constructions, as in Section 5.}
We collect
them together into an atlas of patches we call our {\em fusion rule}:

$$\rrr= \left\{\ppp_n, n \in \N \right\} = \left\{ P_n(j) \,\, | \,\,n \in \N 
 \text{ and } 1 \le j \le j_n \right\}.$$    

A patch is {\em admitted by $\rrr$} if a copy of it can be found
inside some supertile $P_n(j)$ for some $n$ and $j$. A tiling $\T$ of
$\R^d$ is said to be a {\em fusion tiling with fusion rule $\rrr$} if
every patch of tiles contained in $\T$ is admitted by $\rrr$.  We
denote by $X_\rrr$ the set of all $\rrr$-fusion tilings.  Given a fusion
rule, we can obtain another fusion rule $\rrr'$ with $j'_n=j_{n+1}$ 
and $P'_n(j) = P_{n+1}(j)$. We simply ignore the lowest level and
treat the 1-fusion supertiles as our basic tiles. The resulting tiling
space is denoted $X^1_\rrr$. Likewise, $X^k_\rrr$ is the space of tilings
obtained from $\rrr$ in which the $k$-fusion supertiles are considered
the smallest building blocks. 

{\bf Standing assumption (for this entire paper):} If
none of the supertiles in $\rrr$ have inner radii approaching infinity
then $X_\rrr$ will be empty, so for that reason we restrict our attention
to fusion rules that have nontrivial tiling spaces.

When $d=1$ and $G=\Z$, with all tiles having unit length,
fusion tilings correspond to Bratteli-Vershik systems, 
modulo complications having to do with edge sequences that have no
predecessors or no successors. See \cite{BJS} for more about the 
correspondence.  (In addition to subshifts, Bratteli-Vershik systems can model  
non-expansive maps on Cantor sets; these can also be viewed as 
1-dimensional fusion tilings, albeit with infinitely many tile types \cite{ILC}.)  

\begin{ex} {\em The Chacon transformation.} In
  \cite{Chacon} there is an early example of a transformation that is
  weakly mixing but not strongly mixing.  The original
  cutting-and-stacking construction is a self-map on an interval; the
  stacking portion can be seen as a sort of fusion.  However for the
  purposes of an immediate example we use the fact that the Chacon
  space can be viewed symbolically using the substitution rule $$ a
  \to a a b a \qquad b \to b, $$ which can be iterated by substituting
  each letter and concatenating the blocks.  If we begin with an $a$
  we have: $$ a \to a a b a \to a a b a \, a a b a \, b \, a a b a \to
  ...$$ In order to make a Chacon tiling of $\R$ we only need to
  assign closed intervals to the symbols $a$ and $b$ and place them on
  the line according to the symbols in a Chacon sequence.

  We can view a Chacon tiling of $\R$ as a fusion tiling as follows.
  Consider $l_a$ and $l_b$ to be two positive numbers and let $a$
  denote a prototile with support $[0,l_a]$ and $b$ denote a prototile
  with support $[0,l_b]$.  (If $l_a = l_b$ then we use the symbols $a$
  and $b$ as labels to tell the tiles apart).  We define $P_1(a) = a
  \cup (a + l_a)\cup (b + 2l_a)\cup (a + 2l_a + l_b)$ and $P_1(b) =
  b$.  The length of $P_1(a)$ is $3l_a + l_b$.  To
  make $P_2(a)$ we simply fuse three copies of $P_1(a)$ and one
  copy of $P_1(b)$ together in the correct order, and of course
  $P_2(b) = b$ still.  The length of the new $a$ supertile is
  three times that of the previous $a$ supertile plus the length of
  $b$.  We continue recursively to construct all of the $n$-fusion
  supertiles.
\end{ex}



\subsubsection{Transition matrices and the subdivision map}
Given a fusion rule $\rrr$ there is a family of {\em transition
  matrices} that keep track of the number and type of $(n-1)$-fusion
supertiles that combine to make the $n$-supertiles.  The transition
matrix for level $n$, denoted $M_{n-1,n}$, has entries $
M_{n-1,n}(k,l) = $ the number of $(n-1)$-supertiles of type $k$, that
is, equivalent to $P_{n-1}(k)$, in the $ n $-supertile of type $l$,
$P_n(l)$.  If there is more than one fusion of $\ppp_{n-1}$-supertiles
that can make $P_n(l)$, we fix a preferred one to be used in this and
all other computations.  For levels $n < N \in \N$, we likewise define
the transition matrix from $n$- to $N$-supertiles as $M_{n,N} =
M_{n,n+1} M_{n+1,n+2} \cdots M_{N-1,N}$.  The $(i,j)$ entry of
$M_{n,N}$ is the number of $n$-supertiles of type $i$ in the
$N$-supertile of type $j$.  Another way to think about this is to
imagine a ``population vector'' $ v \in \Z^{j_N}$ of a patch of
$N$-supertiles: the entries represent the number of $N$-fusion
supertiles of each type appearing in the patch.  Then $M_{N-1,N} 
v$ gives the population of this patch in terms of $(N-1)$-supertiles,
$M_{N-2, N-1} M_{N-1,N} v$ gives the population in terms of
$(N-2)$-supertiles, and $M_{n,N} v$ gives the population of this
patch in terms of $n$-supertiles.

Any self-affine substitution tiling, in any dimension, can be viewed 
as a fusion tiling. An $n$-supertile is what we get by applying the 
substitution $n$ times to an ordinary tile, and can be decomposed into
$(n-1)$-supertiles according to the pattern of the substitution. 
For such tilings, 
the matrix $M_{n,N}$ is just the $(N-n)$th power of the usual substitution 
matrix.
However, there is an important difference in perspective between substitutions
and fusions. 

A substitution can be viewed as a map from a tiling space to itself,
in which all tiles are enlarged and then broken into smaller
pieces. This map can be repeated indefinitely. In a fusion tiling, we
can likewise break each $n$-fusion tile into level $(n-1)$-supertiles
using the {\em subdivision map} $\sigma_n$, which is a map from
$X^{n}_\rrr$ to $X^{n-1}_\rrr$.  Unlike the substitution map for
self-affine tilings, it cannot go from $X_\rrr$ to itself, and this
map cannot be repeated more than $n$ times. Once you are down to the
atomic level (i.e., ordinary tiles), you cannot subdivide further!
The proofs of theorems about substitution tilings often involve taking
an arbitrary tiling and applying a substitution, or sometimes its
inverse, enough times to achieve a desirable result. For general
fusion tilings, this line of reasoning usually does not work.

\subsubsection{Induced fusions} Let $\{N(n)\}_{n = 1}^\infty$ be an
increasing sequence of positive integers.  The {\em induced fusion on
  $N(n)$ levels}, $\rrr^{ind}$, is obtained from a given fusion $\rrr$
by composing the fusions for levels $N(n)+ 1, ..., N(n+1)$ into one
step. In this case the supertiles of $\rrr^{ind}$ are given by
$\ppp_n^{ind} = \ppp_{N(n)}$, where the $N(n)$-supertiles are seen as
fusions of $N(n-1)$-supertiles.  The transition matrices for
$\rrr^{ind}$ are given by $M_{n,n+1}^{ind} = M_{N(n),N(n+1)}$.

\subsubsection{All FLC tilings are fusion tilings}
\label{alltilingsarefusions}
It is possible to view {\em any} tiling $\T$ of $\R^d$ from a given
prototile set $\ppp_0$ as a fusion tiling, as long as it has finite
local complexity.  Let the set $\ppp_n$ consist of all connected
patches containing $n$ tiles or less.  By finite local
complexity this is a finite
set. Each element of $\ppp_n$ is either an element of $\ppp_{n-1}$ or
is the fusion of two elements of $\ppp_{n-1}$.  (The fact that these
fusions typically are not unique does not matter).

\subsection{Common assumptions}
The previous section shows that the category of fusion tilings is
extremely general. To prove meaningful results, we have to impose
additional conditions on our fusion rules.  We collect several of them
into this section.

\subsubsection{Prototile- and transition-regularity}
These are the cases that are most similar to the usual definitions of
symbolic and tiling substitution.  When the number of supertiles at
each level is constant, we can associate each $n$-supertile to a
specific prototile, regardless of whether there is a geometric connection
between the two.
When we do this we call the fusion rule {\em
 prototile-regular} and rewrite it as:
$$\rrr= \left\{ P_n(p) \,\, | \,\,n \in \N  \text{ and } 
p \in \ppp_0 \right\}.$$

If the number $j_n$ of supertiles at the $n$th level of a fusion rule
$\rrr$ has $J = \liminf j_n$ for some finite $J$, then the fusion rule
is equivalent to a prototile-regular fusion rule by inducing on the
levels for which $j_n = J$.  The price we pay for taking such an
induced fusion is that the transition matrices can become wildly
unbounded. 

In the special case where the number of supertiles at each level is a
fixed constant $J$, if the transition matrices are all equal to a
single matrix we call the fusion rule a {\em transition-regular}
fusion rule.  Being transition-regular is considerably stronger than
being prototile-regular.  All substitution sequences and self-affine
tilings as defined in, for instance,
\cite{Robinson.ams,Sol.self.similar} are transition-regular, but not
every transition-regular fusion tiling comes from a substitution.  The
combinatorics and geometry of how the $(n-1)$-supertiles join to form
$n$-supertiles can change from level to level.

\begin{ex}{\em A fusion that is transition-regular but not a substitution.}
\label{not.substitution}
Consider a 1-dimensional fusion rule with transition matrix
  $\left ( \begin{smallmatrix} 2&1 \cr 1&2 \end{smallmatrix} \right )$
  in which $P_n(a)$ is always given by the word
  $P_{n-1}(a)P_{n-1}(a)P_{n-1}(b)$, and in which $P_n(b)$ is given by
  $P_{n-1}(b)P_{n-1}(b)P_{n-1}(a)$ if $n$ is prime, and is given by
  $P_{n-1}(a)P_{n-1}(b)P_{n-1}(b)$ if $n$ is composite.
\end{ex}

\noindent{\bf Remarks.} \begin{enumerate}  
\item Pseudo-self-similar (or self-affine) tilings, such as the
  Penrose tiling with kites and darts, are also transition-regular
  fusion tilings. In many cases these are asymptotically self-similar,
  and this asymptotic structure was used \cite{Me.Boris, BG} 
   to show that such tilings are topologically equivalent
  to self-similar tilings with fractal boundaries.
\item In the correspondence between one-dimensional
fusion tilings with $G=\Z$ and Bratteli-Vershik systems, 
prototile-regular tilings correspond to finite Bratteli diagrams.   The finite list
of vertices on the $n$th level of the Bratteli diagram represents the  finite
set of $n$-supertiles.
\item The one-dimensional $S$-adic substitution sequences of Durand 
\cite{Durand}
  can be recast as fusion tilings, as can the linearly recurrent
  Delone sets and tower systems in \cite{BBG,APC}. Example
\ref{not.substitution} is $S$-adic.
\item The ``non-constructive'' combinatorial substitutions in \cite{My.primer}
 are exactly the class of prototile-regular fusion tilings.
\end{enumerate}

\subsubsection{Primitivity}

A fusion rule is said to be {\em primitive} if, for each non-negative
integer $n$, there exists another integer $N$ such that every
$n$-fusion supertile is contained in every $N$-supertile.  When the
fusion rule is transition-regular this is equivalent to some power of
the transition matrix having strictly positive entries.  In general it
is equivalent to there existing an $N$ for each $n$ such that $M_{n,N}$ has 
all positive entries. A fusion rule is called {\em strongly
  primitive} if for every $n \ge 1$, each $(n+1)$-supertile contains at
least one copy of every $n$-supertile.  That is, all of the
transition matrices $M_{n,n+1}$ have strictly positive entries.  Any
primitive fusion rule is equivalent to a strongly primitive one by
inducing on enough levels.

Primitivity is one of the most common assumptions used in the literature on
substitution sequences and tilings.   It allows for Perron-Frobenius theory
to be applied to the systems to determine natural frequencies, volumes,
and expansion rates.    We will adapt  this analysis to the fusion 
situation in Section \ref{Dynamics.section}.

\subsubsection{Recognizability} 
%
A fusion rule $\rrr$ is said to be {\em recognizable} if, for each
$n$, the subdivision map $\sigma_n$ from $X^n_\rrr$ to $X^{n-1}_\rrr$ is
a homeomorphism. If so, then every tiling in $X_\rrr$ can be
unambiguously expressed as a tiling with $n$-supertiles for every $n$.
The uniform continuity of the inverse subdivision maps then implies that
there exists a family of {\em recognizability radii} $r_n$
($n=1,2,\ldots$), such that, whenever two tilings in $X_\rrr$ have the
same patch of radius $r_n$ around a point $\vecv \in \R^d$, then the
$n$-supertiles intersecting $\vecv$ in those two tilings are
identical.

For substitution sequences and tilings, recognizability is closely
related to non-periodicity \cite{Mosse,Sol.u.comp}. Recognizability
implies that none of the tilings are periodic. Conversely, if $G$
consists only of translations \cite{Sol.u.comp}, or if $G$ contains a
set of rotation about the origin with no invariant subspaces then the
absence of periodic tilings in $X_\rrr$ implies recognizability
\cite{HRS}.  However, it is easy to construct fusion rules that are
nonperiodic but not recognizable. For instance, the Fibonacci tiling can
be generated either from the substitution $a \to ab$, $b \to a$ or from
the substitution $a \to ba$, $b \to a$. By including both sets of supertiles
in our fusion rule, we obtain a description of the non-periodic 
Fibonacci tiling space in which each tiling has at least two (actually more)
decompositions into $n$-supertiles for $n>0$.

We now show that fusion tiling spaces are topological factors of
recognizable fusion tiling spaces using a construction inspired by the
work of Robinson \cite{R.Robinson} and Mozes \cite{Mozes}.

\begin{ex} \label{recognize-ex}{\em Constructing a recognizable extension.}
Let $\rrr_0$ be a 1-dimensional fusion rule on the letters $a$ and $b$,
each of which is viewed as a tile of length 1.  If we let the $n$-supertiles
be all possible sequences of $a$s and $b$s of length $5^n$, then 
the space $X_{\rrr_0}$ is just the space of all bi-infinite tilings by 
$a$'s and $b$'s and $\rrr_0$ is clearly not recognizable. 

Now let $\rrr$ be 
a 1-dimensional fusion with four letters, $a_1$, $a_2$, $b_1$ and $b_2$.
We call $a_1$ and $b_1$ ``type 1'', and write $x_1$ to mean either
$a_1$ or $b_1$. Likewise $x_2$ means either $a_2$ or $b_2$.  
The 1-supertiles are all 5-letter words of the general form $x_2x_1x_1x_1x_1$
(where each $x_i$ denotes a separate choice of $a_i$ or $b_i$) 
or $x_2x_1x_2x_2x_1$. We will use $s^1_1$ are shorthand for supertiles 
of the first type and $s^1_2$ for the second. Note that each supertile
begins with an isolated $x_2$, and that isolated $x_2$'s appear only
at the beginning of supertiles. This makes the map from $X^1_\rrr$ to
$X_\rrr$ invertible. 

We repeat the coding at higher levels. Second-order supertiles can either
take the form $s^2_1 = s^1_2s^1_1s^1_1s^1_1s^1_1$ or 
$s^2_2 = s^1_2s^1_1s^1_2s^1_2s^1_1$, and generally $(n+1)$-supertiles 
can either take the form
$s^{n+1}_1 = s^n_2s^n_1s^n_1s^n_1s^n_1$ or 
$s^{n+1}_2 = s^n_2s^n_1s^n_2s^n_2s^n_1$. By the same reasoning, all decomposition
maps are invertible, and $\rrr$ is recognizable. Finally, the factor map
$X_\rrr \to X_{\rrr_0}$ just erases the subscripts on all of the letters. 
\end{ex}

The details of the construction will be different for different
examples and can get complicated if the supertiles have wild shapes
or combinatorics, but the basic idea is universal. Pick sufficiently
many copies of your original tile set.  Use some of the labels within
a first-order supertile to indicate which tiles are in the supertile,
and the rest to give the first-order supertiles labels. Use some of
those first-order labels to define the boundaries of the second-order
supertiles, and the rest to label the second-order
supertiles. By continuing the process ad infinitum we obtain a recognizable
fusion tiling space that factors onto the original.  How close to
 this factor map is to being one-to-one becomes an important question.

\subsubsection{Van Hove sequences and fusion rules}
\label{VanHove-sec}
A van Hove sequence $\{A_m\}$ of subsets of $\R^d$ consists of sets
whose boundaries are increasingly trivial relative to their interiors
in a precise sense.  In many cases it will be convenient to consider
only fusion rules where the supertiles share this property. The use of
van Hove sequences, which for $\R^d$ is equivalent to F\o lner
sequences, is adopted from statistical mechanics.  We follow the
notation of \cite{Sol.self.similar} here and define, for any set $A
\in \R^d$ and $r > 0$:
$$ A^{+r} = \{x\in \R^d : \text{ dist}(x, A) \le r\}, $$
where ``dist'' denotes Euclidean distance. A sequence of sets $\{A_n\}$ of sets
in $\R^d$ is called a {\em van Hove} sequence if for any $r \ge 0$
$$\lim_{ n \to \infty} \frac{\text{Vol}\left((\partial A_n)^{+r}\right)}
{\text{Vol}(A_n)} 
= 0,$$ 
where $\partial A$ is the boundary of $A$ and $\text{Vol}$ is Euclidean
volume. 

Given a fusion rule $\rrr$, we may make a sequence of sets in $\R^d$
by taking one $n$-supertile for each $n$ and calling its support
$A_n$.  We say $\rrr$ is a {\em van Hove fusion rule} if every such
sequence is a van Hove sequence. Equivalently, a fusion rule is van Hove
if for each $\epsilon >0$ and each $r>0$ 
there exists an integer $n_0$ such that each 
$n$-supertile $A$, with $n \ge n_0$, has $\text{Vol}(\partial A)^{+r}<
\epsilon \text{Vol}(A)$. 

\subsection{Notational conventions}

Entries of vectors and matrices are indicated as arguments, while
subscripts are used to distinguish between different vectors and
matrices. Thus, $M_{1,2}(3,4)$ is the (3,4) entry of the matrix
$M_{1,2}$ and $v_5(2)$ is the second entry of $v_5$. Vectors are
viewed as columns, so that the product $Mv$ of a matrix and a vector
is well-defined.  Groups are denoted by capital letters, as are
subsets of groups, while elements of groups are lower
case. Collections of patches of tilings are given by calligraphic
letters $\ppp, \rrr$, etc, and in particular our fusion rules are so
denoted. Tilings are bold face. Elements of physical space $\R^d$
are marked with arrows, and the dot product is
reserved for this setting.

\section{Dynamics of fusion tilings}
\label{Dynamics.section}

Let $G = G_t \rtimes G_r$, where $G_t$ is the
translation subgroup and $G_r$ is the point group $G/ G_t$.  By
assumption $G$ contains a full rank lattice of translations, and $G_r$
is a closed subgroup of $O(n)$.  
Let $Vol$ be
Haar measure on $G_t$, a product of Lebesgue measure in
the continuous directions of $G_t$ and counting measure in the discrete
directions, and let $\lambda_0$ be normalized Haar measure on
$G_r$. Let $\lambda$ be a measure on $G$ with
$\lambda(U_t \rtimes U_r) = Vol(U_t) \lambda_0(U_r)$ for every pair of 
measurable sets $U_t \in G_t$, $U_r \in G_r$.  We 
assume that  we have a metric on $G$ whose
restriction to $G_t$ is Euclidean distance and whose restriction to
$G_r$ is bounded by 1.

\subsection{Tiling metric topology and dynamical system}
\label{metric-sec}
Let $X_\ppp$ be the set of all possible tilings using some fixed
prototile set $\ppp$ and some group $G$ of isometries.  (That is, 
a point in this space is an entire tiling of $\R^d$.)  We turn
$X_\ppp$ into a metric space using the so-called ``big ball'' metric
using the metric on $G$ as follows. Two tilings $\T_1$ and $\T_2$ are
$\epsilon$-close if there exist group elements $g_1$ and $g_2$, each
of size less than or equal to $\epsilon$, such that $g_1(\T_1)$ and
$g_2(\T_2)$ exactly agree on the ball of radius $1/\epsilon$ around the
origin.

This metric is not $G$-invariant, as it gives greatest weight to
points close to the origin, but the resulting topology {\em is\/} 
$G$-invariant. A sequence of tilings $\T_i$ converges to a tiling $\T$ if
there exists a sequence of group elements $g_i$, converging to the identity,
such that for every compact subset $K$ of Euclidean space, the tilings
$g_i(\T_i)$ eventually agree with $\T$ on $K$. 

%

\begin{dfn} Let $G' \subseteq G$ contain $G_t$ and let $X$ be a
  closed, $G'$-invariant subset of $X_\ppp$.  A {\em tiling dynamical
    system $(X,G')$} is the set $X$ together with the action of $G'$
  on $X$.
\end{dfn}

It is usually assumed in the tiling literature that $G' = G_t$.  This
can be assumed without loss of generality when $G_r$ is a finite group
simply by making extra copies of each prototile, one for each element
of $G_r$.  The situation is more complicated in cases such as the
pinwheel tiling, where $G_r$ is infinite.  

\subsection{Minimality} A topological dynamical system $(X, G')$ is
{\em minimal} if $X$ is the orbit closure of any of its elements.  

\begin{prop}
  If the fusion rule $\rrr$ is primitive, then the dynamical system
  $(X_\rrr, G)$ is minimal.
\end{prop}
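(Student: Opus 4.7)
My plan is to reduce minimality of $(X_\rrr,G)$ to showing that every patch admitted by $\rrr$ occurs as a sub-patch of every tiling $\T \in X_\rrr$. Indeed, given $\T,\T' \in X_\rrr$ and $R > 0$, the patch $Q$ of $\T'$ lying in the ball of radius $R$ about the origin is, as a patch of a tiling in $X_\rrr$, admitted by $\rrr$. If every admitted patch has an isometric copy inside $\T$, the isometry realizing that copy produces a $g \in G$ with $g(\T)$ agreeing with $\T'$ on $B_R(0)$; letting $R \to \infty$ gives $\T' \in \overline{G\T}$. So the task reduces to: given any admitted patch $Q$ and any $\T \in X_\rrr$, produce a copy of $Q$ inside $\T$.

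By the definition of admissibility, $Q$ sits inside some supertile $P_{n_0}(j_0)$, and primitivity then supplies an integer $N$ such that $P_{n_0}(j_0)$ (and hence $Q$) is contained in every $N$-supertile. So it suffices to show that $\T$ contains an isometric copy of some $N$-supertile. To locate one, I would fix any tile $t \in \T$ and consider the patch $P^r$ of $\T$ consisting of all tiles meeting $B_r(t)$. Since $P^r$ is admitted, there is an isometry $g_r \in G$ and a supertile $S_r = P_{n_r}(j_r)$ with $g_r(P^r) \subset S_r$; equivalently, $g_r^{-1}(S_r)$ is an isometric copy of $S_r$ placed in $\R^d$, and because tiles do not overlap, its tiles lying inside $B_r(t)$ are exactly the tiles of $P^r$, i.e., of $\T$. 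Since $P^r$ covers $B_r(t)$, we have $\text{diam}(S_r) \ge 2r$; and because at each fixed level there are only finitely many supertiles while the standing assumption forces their inner radii (hence diameters) to tend to infinity, $n_r \to \infty$ as $r \to \infty$. Once $n_r \ge N$, the fusion construction canonically decomposes $g_r^{-1}(S_r)$ into $N$-supertiles; the one, call it $S^*$, containing the tile $t$ has diameter at most $c_N := \max_j \text{diam}(P_N(j))$, so for $r \ge c_N$ we get $S^* \subset B_r(t)$. The tiles of $S^*$ are then tiles of $\T$, and $\T$ contains an isometric copy of an $N$-supertile, as required.

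The main obstacle is the absence of a canonical supertile decomposition for a generic $\T \in X_\rrr$: unlike in substitution tilings, one cannot invert a self-map to extract supertiles, and without recognizability the decomposition need not even be unique. The argument above sidesteps this by inflating the patch around $t$ until it is trapped inside a containing supertile of level high enough that the fusion construction produces an $N$-supertile around $t$ whose diameter is small compared to the radius of the patch; that $N$-supertile then automatically lies inside the patch, hence inside $\T$. Everything else is just unwinding the definitions of admissibility and primitivity.
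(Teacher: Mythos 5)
Your proposal is correct and follows the same overall strategy as the paper: reduce minimality to showing that the central patch $[\T']_{R}$ of an arbitrary $\T'$, which by admissibility sits inside some $P_{n_0}(j_0)$, can be found in every $\T \in X_\rrr$ after applying the $N$ supplied by primitivity. The difference is in how the two arguments get a copy of an $N$-supertile (and hence of $P_{n_0}(j_0)$) inside $\T$. The paper simply asserts that ``$\T$ is a union of $N$-supertiles,'' which does not follow immediately from the definition of $X_\rrr$ (membership only requires every finite patch of $\T$ to be admitted; it does not hand you a global hierarchical decomposition, and without recognizability there may be no canonical one). You supply the missing justification: trap the patch $P^r$ of $\T$ around a tile $t$ inside some supertile $S_r$ of level $n_r$, observe that $\mathrm{diam}(S_r)\ge 2r$ forces $n_r\to\infty$ because each level contributes only finitely many (hence boundedly sized) supertiles, decompose $S_r$ into $N$-supertiles once $n_r\ge N$, and note that the one containing $t$ lies entirely in $B_r(t)$ for $r$ large, so its tiles are genuinely tiles of $\T$ by the non-overlapping argument. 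This buys a complete proof at the cost of a little extra bookkeeping; the paper's version is shorter but leans on an unproved (though standard and true) structural claim. One could shave your argument slightly by noting that you only need \emph{some} decomposition of $S_r$ into $N$-supertiles, which the fusion construction provides by iterating the defining fusions downward, exactly as you say.
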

  
\begin{proof}
  Let $\T \in X_\rrr$ be any fixed tiling.  We will show that given $\T'
  \in X_\rrr$ and $\epsilon > 0$ there is a group element $g$ such that
  $d(g(\T), \T') < \epsilon$.  Denote by
  $[\T']_r$ the patch of tiles in $\T'$ that intersect the ball of
  radius $r$ centered at the origin.  By definition we know
  that any such patch is admissible by $\rrr$ and so there is an $n \in
  \N$ and a $i \in \{1, ..., j_n\}$ for which $[\T']_{1/\epsilon}$ is a
  subpatch of $P_n(i)$.

  On the other hand, primitivity means that there is an $N$ such that
  every $N$-supertile contains a copy of
  $P_n(i)$.  Since $\T$ is a union of $N$-supertiles, it
  contains many copies of $P_n(i)$.  Pick $g$ to 
  bring some particular copy of $P_n(i)$ to the
  origin in agreement with $[\T']_{1/\epsilon}$.  Since $\T'$ and $g(\T)$
  are identical on the ball of radius $1/\epsilon$ about
  the origin, $d(g(\T), \T') < \epsilon$.
\end{proof}
\noindent{\bf Remarks.} \begin{enumerate}  
\item It is not necessarily true that $(X_\rrr,G_t)$, i.e. the
  dynamical system with only translations acting, is minimal.
  Consider any fusion rule having only finitely many relative
  orientations of the prototiles, but which for some reason we took
  $G$ to be the full Euclidean group. In this case $(X,G)$ would be
  minimal but $(X_\rrr,G_t)$ would not.  No tiling could approximate
  an irrational rotation of itself.
\item On the other hand, the pinwheel tilings \cite{Radin.annals}
  provide an example where $G$ is the full Euclidean group but $(X,
  G_t)$ is minimal.
\item Primitivity is sufficient but not necessary for minimality. In
  particular, the Chacon transformation is not primitive, but is
  minimal. For each $n$ there does not exist an $N$ for which $P_N(b)$
  contains $P_n(a)$. However, there does exists a radius $r_n$ such
  that every ball of radius $n$ contains at least one $P_n(a)$ and at
  least one $P_n(b)$, so the patch $[\T']_{1/\epsilon}$ can be found in
  every $\T$.
\end{enumerate}


\subsection{Invariant measures in general tiling dynamical systems}
\label{frequency-sec}
We begin our treatment of the ergodic theory of fusion tilings with a
discussion of how invariant measures work for a general FLC tiling
dynamical systems $X$, with a focus on patch frequency.  For
convenience,  we assume for the remainder of Section \ref{Dynamics.section}
that our action is by $G = \R^d$ only. See section \ref{othergroups} 
for the modifications needed to apply this theory to other subgroups of the 
Euclidean group.   

Let $P$ be any patch of
tiles containing the origin.     
Let $U$ be a measurable subset of $\R^d$, let 
$X_{P,U}$ be the cylinder set
of all tilings in $X$ that contain $P - \vec v$ for some $\vec v \in U$, and
let $\chi_{P,U}$ to be the indicator function of this cylinder set.
The sets $X_{P,U}$, plus translates of these sets, 
generate our $\sigma$-algebra of measurable sets in
$X$.  Let $\mu$ be an invariant measure on $X$. 
If $U$ is sufficiently small, then for every tiling $\T \in X$, 
there is at most one $\vec v \in U$ 
for which $P - \vec v \subset \T$. Since the measure is additive and
translation-invariant, 
$\mu(X_{P,U})$ must be proportional to the volume of $U$ and we define
\begin{equation}\label{freq.def}
freq_\mu(P) =  \frac{1}{Vol(U)} \mu(X_{P,U}),
\end{equation}
a quantity that is independent of $U$.

For any $A \subset \R^d$ we denote the number of times an equivalent
copy of $P$ appears in $\T$, completely contained in the set $A$, as
$\#(P \text{ in } A \cap \T)$.  As a special case, if $P'$ is another
patch (usually some supertile), we denote by $\#(P \text{ in } P')$
the number of equivalent copies of $P$ completely contained in
$P'$. Next we pick a specific $U_0$ that is a small ball centered at the origin
and define the function
\begin{equation}
f_P(\T) = \frac{1}{Vol(U_0)} \chi_{P,U_0}(\T). 
\end{equation}
This is a smeared $\delta$-function that counts the appearances of $P$. 
$\int_A f_P(\T-\vec v) d\vec v$ is essentially $\#(P\text{ in }A
\cap \T)$, with small corrections for patches that come within the diameter
of $U_0$ of the boundary of $A$.  Note that 
$\int_X f_P(\T) d\mu = \frac{1}{Vol(U_0)} \mu(X_{P,U_0})=freq_\mu(P)$.

We use the following version of the pointwise ergodic theorem:
\begin{thm} \label{ergodic.from.tiling.freqs} Let $(X, \R^d)$ be a
  tiling dynamical system with invariant Borel probability measure
  $\mu$.  Let $\{A_m\}$ be a sequence of balls centered at the origin,
with radius going to infinity, and let $P$ be any finite
  patch.  Then for $\mu$-almost every tiling $\T$ the limit
\begin{equation} \label{invariantaverage}
\lim_{m \to \infty}\frac{1}{Vol(A_m)} \int_{A_m} f_P(\T-\vecv)
d\lambda(\vecv) = \bar f_P(\T)
\end{equation}
exists. Furthermore, $\int_X \bar f_P(\T) d\mu = \int_X f_P(\T) d\mu
= freq_\mu(P)$. If $\mu$ is ergodic, then for $\mu$-almost every $\T$,
$\bar f_P(\T) = freq_\mu(P)$. 
\end{thm}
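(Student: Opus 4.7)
The plan is to recognize this as a direct application of the Wiener pointwise ergodic theorem for $\R^d$-actions. The function $f_P$ is bounded (equal to $1/\mathrm{Vol}(U_0)$ on the cylinder set $X_{P,U_0}$ and zero elsewhere), hence lies in $L^1(X,\mu) \cap L^\infty(X,\mu)$. The expression in \eqref{invariantaverage} is precisely the ergodic average of $f_P$ along the sequence of balls $A_m$ centered at the origin with radii tending to infinity. Balls in $\R^d$ form a tempered F\o lner sequence, so Wiener's theorem (or more generally Lindenstrauss's theorem for amenable groups) applies.

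First I would verify measurability and integrability: since $X_{P,U_0}$ is a generator of the $\sigma$-algebra (by definition of the topology), $f_P$ is measurable; since $0 \le f_P \le 1/\mathrm{Vol}(U_0)$, it lies in every $L^p$. Then I would cite the Wiener ergodic theorem to conclude that the limit $\bar f_P(\T)$ exists for $\mu$-almost every $\T$, that $\bar f_P$ is $\R^d$-invariant, and that $\bar f_P \in L^1(X,\mu)$ with $\int_X \bar f_P\, d\mu = \int_X f_P\, d\mu$.

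Next, to identify the integral, I would invoke the computation done immediately before the theorem statement, namely
\begin{equation*}
\int_X f_P(\T)\, d\mu \;=\; \frac{1}{\mathrm{Vol}(U_0)}\,\mu(X_{P,U_0}) \;=\; freq_\mu(P),
\end{equation*}
where the second equality uses the defining relation \eqref{freq.def} for $freq_\mu(P)$ (here $U_0$ is small enough that each tiling in $X$ contains at most one translate $P - \vec v$ with $\vec v \in U_0$, which follows from finite local complexity by choosing the diameter of $U_0$ smaller than the minimum distance between two distinct translates of any given patch). Combining these gives $\int_X \bar f_P\, d\mu = freq_\mu(P)$.

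Finally, for ergodic $\mu$: since $\bar f_P$ is $\R^d$-invariant and $\mu$ is ergodic, $\bar f_P$ is $\mu$-a.e.\ constant, and its value must equal its integral, namely $freq_\mu(P)$. The one mildly delicate point, and what I would flag as the main subtlety, is making sure the averaging sets being balls (rather than cubes, say) is compatible with the version of the ergodic theorem cited; this is resolved because balls centered at the origin with radii $\to\infty$ are a classical example of a tempered F\o lner sequence in $\R^d$, so Wiener's theorem applies without modification.
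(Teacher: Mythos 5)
Your proposal is correct and matches the paper's treatment: the paper gives no proof of this statement, presenting it simply as "the following version of the pointwise ergodic theorem," with a footnote noting that balls suffice as the averaging sequence even though such theorems are often stated for tempered or regular F\o lner/van Hove sequences. Your fleshed-out argument via Wiener's theorem for $\R^d$-actions, the identification $\int_X f_P\,d\mu = freq_\mu(P)$ from the definition preceding the theorem, and the a.e.-constancy of the invariant limit in the ergodic case is exactly the standard justification the paper implicitly relies on.
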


The quantity $\bar f_P(\T)$ corresponds to the usual notion of 
frequency as ``number of occurrences per unit area'' in $\T$, as computed with
an expanding sequence of balls around the origin.\footnote{Ergodic theorems are
often stated not with balls, but 
in terms of F\o lner or van Hove sequences that have special
properties, such as being ``regular'' or ``tempered''. That generality is
useful for computing frequencies using different sampling regions,
or when considering more complicated groups than $\R^d$.
For our purposes, however, balls are sufficient.} We will use the term
``frequency'' for this quantity, and will call $freq_\mu(P)$ an ``abstract
frequency''. The ergodic theorem says
that almost all tilings have well-defined frequencies, and that
the abstract frequency $freq_\mu(P)$, 
while not necessarily the frequency of $P$ 
in any specific
tiling, is the average over all tilings of the frequency of $P$. Thus, 
any upper or lower bounds on the frequency of $P$ that apply to $\mu$-almost
every $\T$ result in upper or lower bounds on the abstract frequency. If every
tiling has the same frequency of $P$, then there is only one possible value for
the abstract frequency of $P$, and thus for the measure of any $X_{P,U}$.
Tiling spaces where all tilings have the same set of frequencies are
uniquely ergodic.  

For an FLC tiling, the set of all patches 
(up to translation) is countable, and the intersection
of a countable number of sets of full measure has full measure. 
As a result, $\mu$-almost every tiling $\T$ has well-defined frequencies for
{\em every} patch $P$. 

Conversely, if a tiling $\T$ has well-defined patch frequencies, then 
we can construct a probability measure on $X$ by taking
$\mu(X_{P,U}) = \bar f_P(\T) Vol(U)$ for small $U$ and extending by 
additivity to larger $U$'s.  The additivity properties of measures 
follow from the addititivity of frequencies. For instance, if a patch
$P$ can be extended in two ways, to $P'$ or $P''$, then 
$X_{P,U} = X_{P',U} \coprod X_{P'',U}$. The identity $\mu(X_{P,U})=
\mu(X_{P',U}) + \mu(X_{P'',U})$ follows from $\bar f_{P}(\T)
= \bar f_{P'}(\T) + \bar f_{P''}(\T)$. Countable additivity is not an issue,
since the tiling space is locally the product of Euclidean space (where
Lebesgue measure has all the desired properties) and a Cantor set (where
the $\sigma$-algebra is based on finite partitions into clopen sets). 

A measure defined in this way may or may not be
ergodic.  For instance, if $\T$ is a one-dimensional tiling with the
pattern $a^\infty b^\infty$, with $a$ tiles to the left of the origin
and $b$ tiles to the right, then the resulting measure on the orbit
closure of $\T$ is the average of the two ergodic measures.

\subsection{Invariant measures and fusion tilings}   
The possibilities for invariant measures of fusion tilings are
intimately connected to the asymptotic behavior of the transition
matrices $M_{n,N}$ as $N \to \infty$.  Our analysis of these matrices takes the
place of the standard Perron-Frobenius theory used so fruitfully for
substitution systems.  The results of this section and the 
next closely 
parallel those of \cite{Fisher, BKMS}, the difference being that
those papers consider discrete systems in 1 dimension, while we consider
continuous systems in an arbitrary number of dimensions. 
We assume that our fusion rule is van Hove,
recognizable, and primitive; these properties are essential.  
%
%

We define the frequency $\tilde f_{P_n(j)}$ of a supertile $P_n(j)$
in a tiling $\T$ to be its frequency {\em as a supertile}, not as a patch. 
In other
words, $\tilde f_{P_n(j)}(\T)$ 
is obtained by viewing $\T$ as an element of 
$X^n_\rrr$, thereby excluding patches that have the same composition
as $P_n(j)$, but are actually proper subsets of another $n$-supertile
or straddle two or more $n$-supertiles. The abstract supertile
frequency of $P_n(j)$ 
is obtained by averaging $\tilde f_{P_n(j)}$ over all tilings. 
By recognizability, each occurrence of a supertile $P_n(j)$ is marked by
an element of a set of larger patches $S_i$. We then have 
$\tilde f_{P_n(j)}(\T) = \sum_i \bar f_{S_i}(\T)$, and 
the abstract supertile frequency of $P_n(j)$ is $\sum_i freq_\mu(S_i)$.

Consider a sequence $\rho = \{\rho_n\}$ where each $\rho_n \in \R^{j_n}$ has
all nonnegative entries.  We say that $\rho$ is {\em
  volume-normalized} if for all $n$ we have $\sum_{i = 1}^{j_n} \rho_n(i)
Vol(P_n(i)) = 1$.  We say that it has {\em transition consistency} if
$\rho_n = M_{n,N} \rho_N$ whenever $n < N$.  A transition-consistent
sequence $\rho$ that is normalized by volume is called a sequence of
{\em well-defined supertile frequencies}.  This terminology will be
justified by the proof of Theorem \ref{measures.equiv.rns}.

\begin{thm} \label{measures.equiv.rns}
Let $\rrr$ be a recognizable, primitive, van Hove fusion rule.
There
is a one-to-one correspondence between the set of all invariant Borel
probability measures on $(X_\rrr, \R^d)$ and the set of all sequences
of well-defined supertile frequencies with the correspondence 
that, for all patches $P$, 
\begin{equation} \label{measure.from.frequency}
freq_\mu(P) = \lim_{n \to \infty} \sum_{i = 1}^{j_n} \#\left(P \text{
  in } P_n(i)\right) \rho_n(i)
\end{equation}
\end{thm}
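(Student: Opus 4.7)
The plan is to establish the bijection in both directions, using recognizability to pass between supertiles and patches, and the van Hove property to control boundary effects.

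First I would prove the forward direction: given an invariant Borel probability measure $\mu$, define $\rho_n(i)$ to be the abstract supertile frequency of $P_n(i)$, that is, the average over $\T$ of the supertile frequency $\tilde f_{P_n(i)}(\T)$ defined just before the theorem. Transition consistency is then immediate from recognizability: each occurrence of an $n$-supertile of type $i$ in any tiling $\T \in X_\rrr$ lies inside exactly one $N$-supertile, and an $N$-supertile of type $j$ contains exactly $M_{n,N}(i,j)$ of them, so $\tilde f_{P_n(i)}(\T) = \sum_j M_{n,N}(i,j)\tilde f_{P_N(j)}(\T)$, and averaging gives $\rho_n = M_{n,N}\rho_N$. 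Volume normalization comes from the fact that $\sum_i \tilde f_{P_n(i)}(\T) Vol(P_n(i)) = 1$ for almost every $\T$ (the $n$-supertiles partition $\R^d$), which integrates to $\sum_i \rho_n(i)Vol(P_n(i)) = 1$.

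Next I would verify formula \eqref{measure.from.frequency}. Fix a patch $P$ and a large ball $A_m$. For each $n$, classify the copies of $P$ inside $A_m \cap \T$ according to whether they lie inside some $n$-supertile of $\T$ or straddle the boundary between $n$-supertiles. The interior count, divided by $Vol(A_m)$, tends to $\sum_i \#(P \text{ in } P_n(i))\tilde f_{P_n(i)}(\T)$ as $m \to \infty$ by the ergodic theorem applied at the $n$th level. The boundary count is bounded by $\#(P \text{ in } A_m \cap \T)$ times the fraction of $A_m$ within $\text{diam}(P)$ of the boundary of some $n$-supertile, and by the van Hove hypothesis this fraction tends to $0$ as $n \to \infty$ uniformly in $m$. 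Integrating against $\mu$ and taking $n \to \infty$ yields \eqref{measure.from.frequency}; in particular the limit on the right-hand side exists and equals $freq_\mu(P)$.

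For the reverse direction, given a volume-normalized, transition-consistent $\rho$, define $freq(P)$ by the right-hand side of \eqref{measure.from.frequency}; transition consistency together with $\#(P \text{ in } P_n(i)) \le \sum_j M_{n,N}(i,j)\#(P\text{ in }P_N(j))$ plus van Hove corrections shows the sequence is monotone up to a van-Hove-vanishing error, hence has a limit. Set $\mu(X_{P,U}) = freq(P)Vol(U)$ on cylinder sets with sufficiently small $U$ (so the $\vec v$ translating $P$ into $\T$ is unique), and extend by translation invariance and additivity. Additivity on the patch side (as discussed in Section \ref{frequency-sec} for the decomposition $X_{P,U} = X_{P',U}\coprod X_{P'',U}$ when $P$ extends to $P'$ or $P''$) follows because $\#(P \text{ in }P_n(i)) = \#(P' \text{ in }P_n(i)) + \#(P'' \text{ in }P_n(i))$ up to boundary terms that vanish in the van Hove limit; this is where volume normalization pins down the total mass to be $1$. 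Countable additivity is not an issue because, as noted in Section \ref{frequency-sec}, the $\sigma$-algebra is generated locally by a Cantor structure times Lebesgue measure. The two constructions are inverse to one another since $\mu$ is determined by the values $freq_\mu(P)$ on all patches $P$, and these are in turn determined by $\rho$ through \eqref{measure.from.frequency}.

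The main obstacle I anticipate is the uniform control of the boundary-straddling patches: one must show that the error term introduced by copies of $P$ that cross between $n$-supertiles vanishes as $n \to \infty$, uniformly enough that it does not compete with the ergodic limit as $m \to \infty$. Primitivity is needed here to guarantee a positive lower bound on $Vol(P_n(i))$-weighted frequencies so that van Hove's volume ratio translates into a genuine frequency estimate, and recognizability is needed so that the classification of a $P$-occurrence as ``interior'' or ``boundary-crossing'' is unambiguous at each level.
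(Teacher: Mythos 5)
Your proposal follows essentially the same route as the paper's proof: extract $\rho_n$ as the abstract supertile frequencies (transition consistency from decomposing each occurrence of an $n$-supertile according to the $N$-supertile containing it, volume normalization from the supertile partition of $\R^d$), verify the frequency formula by splitting $P$-occurrences in a large ball into interior and supertile-straddling ones with the van Hove property controlling the latter, and conversely define the measure from the right-hand side of \eqref{measure.from.frequency} and check convergence, normalization, and additivity. The only blemish is a transposed index in your convergence inequality --- the relevant bound is $\#\left(P \text{ in } P_N(j)\right) \ge \sum_i M_{n,N}(i,j)\,\#\left(P \text{ in } P_n(i)\right)$ --- but the surrounding argument makes the intent clear.
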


\begin{proof}  Suppose $\mu$ is an invariant measure.   For each 
$n \in \N$ and each $i \in \{1, 2, ..., j_n\}$, define $\rho_n(i)$ to be
  the abstract supertile frequency of $P_n(i)$.  
For a fixed $n$, $X_\rrr$ is the union
  of cylinder sets given by which $n$-supertile is at the origin.
  Since the measure of $X_\rrr$ is 1 and the measure of each of these
  cylinder sets is $\rho_n(i) Vol(P_n(i))$, the sequence
  $\rho$ is volume-normalized.

The set of tilings where the origin lies in an $n$-supertile of type $i$ is
the union of disjoint sets where the origin lies in a supertile
of type $i$, which in turn sits in an particular way in an $N$-supertile. 
There are $M_{n,N}(i,j)$ ways for $P_n(i)$ to sit in $P_N(j)$.
The additivity of the measure implies that $\rho_n(i)
= \sum_{j = 1}^{j_N}M_{n,N}(i,j) \rho_N(j)$.  Hence 
$\mu$ gives rise to a sequence of well-defined supertile frequencies.

To see that 
equation (\ref{measure.from.frequency}) applies, let $P$ be any patch and call its
diameter $L_P$.  Since the fusion rule is van Hove, we can pick an $n$
large enough that the fraction of each $n$-supertile within $L_P$ of
the boundary is so small that $P$ patterns appearing in this region
can only contribute $\epsilon$ or less to the frequency of $P$'s in a
union of $n$-supertiles. 

To count the number of $P$'s in a large ball $A_m$ around the origin, 
we must count the
number of $P$'s in each $n$-supertile contained in that ball, plus the
number of $P$'s that straddle two of more $n$-supertiles,
plus the $P$'s in an $n$-supertile that is 
only partially in the ball. As a fraction of the whole, the third set goes
to zero as $m \to \infty$ and the second goes to zero as $n \to \infty$. 
Thus $\#(P \text{ in } A_m \cap \T) = \sum_{i = 1}^{j_n} \#(P \text{ in }
P_n(i)) \#(P_n(i) \text{ in } A_m) + $ boundary occurrences.  
Dividing by $Vol(A_m)$ and taking
limits, first as $m \to \infty$ and then as $n \to\infty$,
gives the identity
$$
\bar f_P(\T) = \lim_{n \to \infty} \sum_{i = 1}^{j_n} \#\left(P \text{
  in } P_n(i)\right) \tilde f_{P_n(i)}(\T)
$$
for all tilings $\T$ with well-defined patch frequencies.
Integrating this identity 
over all tilings then gives equation 
(\ref{measure.from.frequency}).

Now suppose that $\{\rho_n\}$ is a sequence of well-defined supertile
frequencies.  To establish
the existence of an invariant measure $\mu$ for which $\{\rho_n\}$
represents the abstract supertile frequencies, we simply define
$freq_\mu(P)$, and hence the measure of each cylinder set $X_{P,U}$,
by equation (\ref{measure.from.frequency}). 

To see convergence of 
the limit on the right hand side, note that, if
$n<N$, the number of $P$ in $P_N(j)$ is
the sum of the number of $P$ in each $n$-supertile in $P_N(j)$, plus
a small contribution from $P$'s that straddle two or more supertiles. 
That is,
$\#(P \text{ in } P_N(j))
\approx \sum_i \#(P \text{ in }P_n(i)) M_{n,N}(i,j)$, so 
$$\sum_{j} \#(P \text{ in } P_N(j)) \rho_N(j)
\approx \sum_{i,j} \#(P \text{ in }P_n(i)) M_{n,N}(i,j) \rho_N(j)
= \sum_{i} \#(P \text{ in } P_n(i)) \rho_n(i).$$
As $n \to \infty$ 
the contribution of $P$'s that straddle two $n$-supertiles goes to
zero, so the right hand side of 
(\ref{measure.from.frequency}) is a Cauchy sequence. 

The non-negativity of the measure follows 
from the non-negativity of each $\rho_n$. The identity $\mu(X_\rrr)=1$
follows from volume normalization. Finite 
additivity follows from the observation
that, if a patch $P$ can be extended to $P'$ or $P''$, then
$\#(P \text{ in }P_n(i)) = \#(P' \text{ in }P_n(i)) + \#(P'' \text { in }
P_n(i))$, plus a small correction for the situations where $P$ is completely
contained in $P_n(i)$ but $P'$ or $P''$ is not, a correction that does
not affect the limit as $n \to \infty$. As noted earlier, countable 
additivity is not an issue for tiling spaces. Thus $\mu$ is a well-defined
measure. 

\end{proof}

\subsubsection{Parameterization of invariant measures.}
In Theorem \ref{measures.equiv.rns} we showed how measures relate to
well-defined sequences of supertile frequencies. We now
give an explicit parametrization of the invariant measures in terms
of the transition matrices $M_{n,N}$, a parametrization that we will
then use to investigate unique ergodicity.

The {\em direction} of each
column of $M_{n,N}$ is defined to be the volume-normalized vector in $\R^{j_n}$ collinear
with it, and we define the {\em direction matrix} $D_{n,N}$ to be the
matrix whose columns are the directions of the columns
of $M_{n,N}$.  That is, 
$$D_{n,N}(*,k) = \frac{M_{n,N}(*,k)}{\sum_{l = 1}^{j_n} M_{n,N}(l,k)Vol(P_n(l))}.$$ 
%

Let $\poly_{n,N}$ be the convex hull of the
columns of $D_{n,N}$, sitting within the hyperplane of volume-normalized  
vectors in $\R^{j_n}$. 
Note that the extreme points of $\poly_{n,N}$ are columns of 
$D_{n,N}$, but not every column need be an extreme point. 
Since each column of $M_{n,N+1}$ is a sum of columns of $M_{n,N}$,
each column of $D_{n,N+1}$ is a weighted average 
of the columns of $D_{n,N}$, so $\poly_{n,N+1} \subset \poly_{n,N}$. 
Let $\poly_n = \cap_{N=n+1}^\infty \poly_{n,N}$.   

The matrix $M_{n,N}$ defines an affine map sending $\poly_N$
to $\poly_n$, since if $\rho_N$ is volume-normalized in $\R^{j_N}$, then so 
is $M_{n,N} \rho_N \in \R^{j_n}$. 
We define $\poly_\infty$ to be the inverse limit of the polytopes
$\poly_n$ under these maps. 

\begin{cor} \label{poly.parameterizes} Let $(X_\rrr, \R^d)$ be the
  dynamical system of a recognizable, primitive, van Hove fusion rule.
  The set of all invariant Borel probability measures is parameterized
  by $\poly_\infty$.
\label{parameterize.measures}
\end{cor}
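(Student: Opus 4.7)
The plan is to combine Theorem \ref{measures.equiv.rns}, which already identifies invariant measures with sequences $\{\rho_n\}$ of well-defined supertile frequencies, with a direct verification that such sequences are exactly the points of the inverse limit $\poly_\infty$. The content of the corollary is thus essentially bookkeeping: one must match the conditions ``nonnegative, volume-normalized, transition-consistent'' against the inverse-limit structure of $\poly_\infty$.

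The first key step is the identity
$$M_{n,N}(*,k) = Vol(P_N(k))\, D_{n,N}(*,k),$$
which follows straight from the definition of $D_{n,N}$ once one notes that $\sum_l M_{n,N}(l,k)\,Vol(P_n(l))$ is just the total volume of $P_N(k)$. Using this, for any nonnegative volume-normalized $\rho_N \in \R^{j_N}$,
$$M_{n,N}\rho_N = \sum_{k=1}^{j_N} \bigl(\rho_N(k)\,Vol(P_N(k))\bigr)\, D_{n,N}(*,k),$$
a convex combination of the columns of $D_{n,N}$, since the coefficients sum to $1$ by volume normalization. Hence $M_{n,N}\rho_N \in \poly_{n,N}$. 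Combined with the composition identity $M_{n,N}D_{N,N'}(*,k) = D_{n,N'}(*,k)$ for $N < N'$, the same argument shows $M_{n,N}\rho_N \in \poly_{n,N'}$ for every $N' > n$, hence $M_{n,N}\rho_N \in \poly_n$, so the affine maps $M_{n,N}\colon \poly_N \to \poly_n$ making up the inverse system are well-defined.

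With that in hand, the bijection follows directly. Given a sequence $\{\rho_n\}$ of well-defined supertile frequencies, transition consistency is precisely the bonding condition $\rho_n = M_{n,N}\rho_N$, and the computation above places each $\rho_n$ in $\poly_n$; thus $\{\rho_n\}$ determines a point of $\poly_\infty$. Conversely, a point of $\poly_\infty$ is by definition a compatible sequence $\{\rho_n\}$ with $\rho_n \in \poly_n$, and each such $\rho_n$ is automatically nonnegative (as $\poly_n$ lies in the nonnegative orthant, being built as convex hulls of nonnegative vectors) and volume-normalized (as each column of every $D_{n,N}$ is, and this property is preserved by convex combinations and intersections), making $\{\rho_n\}$ a sequence of well-defined supertile frequencies. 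Applying Theorem \ref{measures.equiv.rns} then yields the parameterization. The main ``obstacle'' is purely notational, namely keeping track of the scaling factors $Vol(P_N(k))$ that connect $M_{n,N}$ to $D_{n,N}$; no genuinely new dynamical or combinatorial ingredient beyond Theorem \ref{measures.equiv.rns} is required.
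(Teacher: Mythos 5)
Your proposal is correct and follows essentially the same route as the paper: reduce to Theorem \ref{measures.equiv.rns}, observe that points of $\poly_\infty$ are automatically nonnegative, volume-normalized and transition-consistent, and conversely use $\rho_n = M_{n,N}\rho_N$ to place each $\rho_n$ in every $\poly_{n,N}$ and hence in $\poly_n$. The only difference is that you make explicit the scaling identity $M_{n,N}(*,k) = Vol(P_N(k))\,D_{n,N}(*,k)$ and the fact that the weights $\rho_N(k)\,Vol(P_N(k))$ sum to $1$, details the paper leaves as the phrase ``weighted average of the columns of $D_{n,N}$.''
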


\begin{proof}
  By Theorem \ref{measures.equiv.rns}, we need only show that each
  element of $\poly_\infty$ gives rise to a sequence $\{\rho_n\}$ of
  well-defined supertile frequencies and vice versa.

By construction, each point in $\poly_\infty$ is a sequence of 
well-defined supertile frequencies, since each point in $\Delta_n$ is
volume-normalized and non-negative, and since the sequence has
transition consistency. For the converse, suppose that $\{ \rho_n\}$
is a sequence of well-defined supertile frequencies. We must show that  
$\rho_n \in \poly_n$. Since $\rho_n = M_{n,N} \rho_N$, $\rho_n$ is 
a non-negative linear combination of the columns of $M_{n,N}$, and so is
a weighted average of the columns of $D_{n,N}$. Thus $\rho_n \in 
\poly_{n,N}$. Since this is true for every $N$, $\rho_n \in \poly_n$.
\end{proof}

\subsubsection{Measures arising from supertile sequences}
In this section we provide a concrete way of visualizing certain invariant
measures, in particular the ergodic ones.   The way to do it is by looking
at frequencies of patches as they occur in specific sequences of
nested supertiles.

\begin{dfn}
Let $\kappa = \{k_n\}$ be a sequence of supertile labels, 
with $k_n \in \{1,2,\ldots,j_n\}$. For each $n<N$, we consider the 
frequency of each $n$-supertile $P_n(i)$ within $P_N(k_N)$:
$$\rho_{n,N}(i)= M_{n,N}(i,k_N)/Vol(P_N(k_N)).$$
We say that $\kappa$ has {\em well-defined supertile
frequencies} if $\rho_n(i) = \lim_{N \to \infty} \rho_{n,N}(i)$ exists
for every $n$ and every $i \in \{1,\ldots, j_n\}$.
\end{dfn}

Note that the vectors $\rho_n(i)$, if they exist, do indeed form a
sequence of well-defined supertile frequencies. For $n<n'<N$,
$\rho_{n,N} = M_{n,n'}\rho_{{n'}\!,N}$. Taking a limit as $N \to \infty$ gives
$\rho_n = M_{n,n'} \rho_{n'}$, so the sequence has transition consistency.  
Likewise, it is easy to check volume normalization. 
We can therefore associate an invariant measure to every sequence 
$\kappa$ that has well-defined supertile frequencies.

The purpose of using a sequence $\kappa$ is to visualize a
measure.  Given such a sequence, one can imagine a tiling $\T$ where
the origin sits inside a $k_1$ 1-supertile, which sits inside a $k_2$
2-supertile, etc.  Under mild assumptions, the supertile 
frequencies $\tilde f_{P_n(i)}(\T)$ will then equal $\rho_n(i)$, and 
for any patch $P$, $\bar f_P(\T)$ will equal $freq_\mu(P)$, where 
$\mu$ is constructed from the sequence $\{\rho_n\}$.
The concept of using sequences $\kappa$ to
obtain measures applies even to non-primitive fusions, as long as the
supertile frequencies are well-defined.

\begin{ex}{\em A minimal fusion rule with two ergodic measures.}
This is a variation on an example found in \cite{FFT} and illustrates
the results of \cite{Down}.
Consider a prototile-regular 1-dimensional fusion rule with two unit
  length tiles $a$ and $b$ and let $G = \R$. Let $P_n(a) =
  (P_{n-1}(a))^{10^n} P_{n-1}(b)$ and $P_n(b) = (P_{n-1}(b))^{10^n}
  P_{n-1}(a)$, so that $P_1(a) = aaaaaaaaaab$ and $P_1(b) =
  bbbbbbbbbba$, etc.  
  $M_{n-1,n}=\left ( \begin{smallmatrix} 10^n & 1 \\ 1 & 10^n 
\end{smallmatrix} \right )$
%
%
%
which has eigenvalues $10^n - 1$ and
  $10^n + 1$.  Elementary linear algebra allows us to
  compute the frequencies as follows.  Let $\alpha_n = \prod_{k=1}^n
  \frac{10^k-1}{10^k+1}$, which approaches a limit of just over 0.8 as
  $n \to \infty$.  The fraction of $a$'s in $P_n(a)$ is
  $(1+\alpha_n)/2 \approx 0.9$, while the fraction of $a$'s in
  $P_n(b)$ is $(1-\alpha_n)/2 \approx 0.1$. 
 
  There are exactly two ergodic measures on this system.
$\Delta_n$ is an interval for every value of $n$,
with endpoints defined by the limits of the first and second columns of 
$D_{n,N}$. Likewise, $\Delta_\infty$ is an interval, whose endpoints
$\mu_a$ and $\mu_b$ can be obtained from the supertile sequences 
$\kappa=(a,a,a,a,\ldots)$ and 
$\kappa=(b,b,b,b,\ldots)$. 
  The first ergodic measure, $\mu_a$, sees the frequencies of $a$'s and
  $b$'s as measured in the type-$a$ supertiles and thus is $a$-heavy;
  the second, $\mu_b$, reverses the roles of $a$ and $b$ and is
  $b$-heavy.  The measure $\mu =
  (\mu_a + \mu_b)/2$ is invariant but not ergodic; this measure
  corresponds to Lebesgue measure when the system is
  seen as a cut-and-stack transformation.

\label{not-uniquely-ergodic-ex}\end{ex}

In general, a prototile-regular fusion with $j$ species of tiles can
have at most $j$ ergodic measures. Of course, there can be fewer, if one
or more columns of $D_{n,N}$ are in the convex hull of the others for large
$N$. The following example shows how a sequence $\kappa$ may lead to a 
measure that is not ergodic. 

\begin{ex}{\em A non-ergodic measure coming from a sequence $\kappa$.}
\label{non-ergodic.ex}
Consider the following variant of the previous example. Instead of having
two species of tiles or supertiles, we have three, with the fusion rules\\
$P_n(a) = (P_{n-1}(a))^{10^n}P_{n-1}(b)P_{n-1}(c)(P_{n-1}(a))^{10^n}$, \\
$P_n(b) = (P_{n-1}(b))^{10^n}P_{n-1}(a)P_{n-1}(c)(P_{n-1}(b))^{10^n}$, \\
$P_n(c) = (P_{n-1}(a))^{10^n}P_{n-1}(c)P_{n-1}(c)(P_{n-1}(b))^{10^n}$, \\
with
transition matrix $M_{n-1,n}=\left ( \begin{smallmatrix}
2\times 10^n & 1 & 10^n \cr 1&2\times 10^n & 10^n \cr
1 & 1& 2 \end{smallmatrix} \right )$. As before, the measure $\mu_a$
coming  from the sequence $a,a,\ldots$ is ergodic and describes the patterns in
a high-order $a$ supertile, which is rich in $a$ tiles, 
while the measure $\mu_b$ describes the
patterns in a high-order $b$ supertile, which is similarly rich in $b$
tiles. The measure $\mu_c$ from $c,c,\ldots$ describes a high-order
$c$ supertile, which is (essentially) half high-order $a$ supertiles and half 
high-order $b$ supertiles. In other words, $\mu_c = (\mu_a+\mu_b)/2$ is
not ergodic.
\label{kappa-ex}\end{ex}

\subsection{Unique ergodicity}
A system is {\em uniquely ergodic} if it has exactly one ergodic
probability measure, in which case this measure is the only invariant
probability measure whatsoever.  Tiling dynamical
systems are uniquely ergodic when there are uniform patch frequencies
that can be computed regardless of the tiling
(see e.g. Theorem 3.3 of \cite{Sol.self.similar}).

For each $n$, we say that the family of matrices $D_{n,N}$
is {\em asymptotically rank 1} if there is a vector $d_n \in \R^{j_n}$
such that the columns of $D_{n,N}$ all approach 
$d_n$ as $N \to \infty$. Put another way, $D_{n,N}$ is asymptotically
rank one if $\poly_n$ consists of a single point. 

\begin{thm}\label{unique}
  If a primitive fusion rule $\rrr$ is van Hove and recognizable, then $D_{n,N}$
  is asymptotically rank 1 for every $n$ if and only if the tiling
  dynamical system $(X_\rrr, \R^d)$ is uniquely ergodic.
\end{thm}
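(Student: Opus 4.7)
The plan is to recognize the theorem as a statement about the shape of $\poly_\infty$. By Corollary~\ref{poly.parameterizes}, invariant Borel probability measures on $(X_\rrr,\R^d)$ are in bijection with $\poly_\infty$, so unique ergodicity is equivalent to $|\poly_\infty|=1$. The remark following the definition of asymptotic rank one notes that this condition at level $n$ is exactly $|\poly_n|=1$. Thus the theorem reduces to showing that $|\poly_n|=1$ for every $n$ if and only if $|\poly_\infty|=1$. The forward direction is immediate, since an inverse limit of singletons is a singleton.

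For the converse, I would show that the canonical projection $\pi_n\colon\poly_\infty\to\poly_n$ is surjective for every $n$; once that is known, $|\poly_\infty|=1$ forces $|\poly_n|=1$. Fix $\rho_n\in\poly_n$ and, for each $N\ge n$, set
\[
A_N=\bigl\{\rho\in\R^{j_N}_{\ge 0}:\ \textstyle\sum_k\rho(k)\,Vol(P_N(k))=1\text{ and }M_{n,N}\rho=\rho_n\bigr\}.
\]
Each $A_N$ is compact and nonempty: writing $\rho_n=\sum_k c_k D_{n,N}(\cdot,k)$ with $c_k\ge 0$ and $\sum_k c_k=1$ (possible because $\rho_n\in\poly_{n,N}$), the vector defined by $\rho_N(k):=c_k/Vol(P_N(k))$ lies in $A_N$, using the identity $\sum_l M_{n,N}(l,k)\,Vol(P_n(l))=Vol(P_N(k))$ to check volume normalization.

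To extract a coherent transition-consistent extension I would equip $\prod_{N\ge n}A_N$ with the product topology (compact by Tychonoff) and apply the finite-intersection property. For each $K>n$ let $B_K\subset\prod_{N\ge n}A_N$ consist of sequences that begin with the prescribed $\rho_n$ and satisfy $\rho_N=M_{N,N+1}\rho_{N+1}$ for $n\le N<K$. Each $B_K$ is closed, the $B_K$ are nested, and each is nonempty (choose any $\rho_K\in A_K$, define $\rho_N:=M_{N,K}\rho_K$ for $n\le N<K$, and fill in the tail arbitrarily). Any point of $\bigcap_K B_K$ is a transition-consistent sequence $(\rho_N)_{N\ge n}$, and each $\rho_N$ automatically lies in $\poly_N$, because $\rho_N=M_{N,N'}\rho_{N'}$ realizes it as a convex combination of columns of $D_{N,N'}$ for every $N'>N$. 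This sequence is thus a point of $\poly_\infty$ projecting to $\rho_n$, establishing surjectivity. The main obstacle I anticipate is precisely this surjectivity step; without it one could imagine $\poly_\infty$ collapsing to a single point while some $\poly_n$ retained extremal directions that fail to lift to invariant measures.
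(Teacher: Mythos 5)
Your proof is correct and follows essentially the same route as the paper's: both reduce the theorem, via Corollary~\ref{poly.parameterizes}, to the equivalence between $\poly_\infty$ being a single point and every $\poly_n$ being a single point. The paper asserts this equivalence in one line, whereas you supply the nontrivial half --- the surjectivity of the projection $\pi_n\colon \poly_\infty \to \poly_n$, proved by a compactness/finite-intersection argument --- which is a worthwhile detail the paper leaves implicit.
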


\begin{proof}
  By Corollary \ref{poly.parameterizes}, having a unique measure is
  the same as $\poly_\infty$ being a single point, which is equivalent
  to each $\poly_n$ being a single point.
\end{proof}
 
\begin{cor}
  The tiling dynamical system of a transition-regular fusion rule that
  is recognizable, van Hove and primitive is uniquely ergodic.
\end{cor}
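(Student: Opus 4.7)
The plan is to reduce unique ergodicity, via Theorem \ref{unique}, to showing that the direction matrices $D_{n,N}$ are asymptotically rank one, and then to apply the classical Perron--Frobenius theorem to the single transition matrix $M$ that governs a transition-regular fusion rule.

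First I would record the structural observation that transition-regularity means there is one matrix $M$ with $M_{n,n+1} = M$ for every $n$, and hence $M_{n,N} = M^{N-n}$. Primitivity of the fusion rule then becomes the classical statement that some power of $M$ has strictly positive entries, so $M$ is a primitive nonnegative matrix. Since $\rrr$ is also recognizable and van Hove, the hypotheses of Theorem \ref{unique} are in force, and it suffices to prove that for each fixed $n$, the columns of $D_{n,N}$ converge to a common vector as $N \to \infty$.

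Next I would invoke Perron--Frobenius: $M$ has a simple positive eigenvalue $\lambda$ equal to its spectral radius, with strictly positive right and left eigenvectors $v$ and $w$, which we normalize by $w^T v = 1$. Then $\lambda^{-(N-n)} M^{N-n} \to v\, w^T$ as $N \to \infty$. In particular, the $k$-th column of $M^{N-n}$ satisfies
\[
M^{N-n}(*,k) = \lambda^{N-n}\bigl(w(k)\,v + \varepsilon_{N,k}\bigr),
\]
where $\varepsilon_{N,k} \to 0$ as $N \to \infty$, uniformly in $k$ (there are only finitely many columns). Thus every column of $M^{N-n}$ is, up to a scalar depending on $k$ and an error that vanishes, a positive multiple of the single Perron vector $v$.

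Now I would pass to the direction matrix. By definition $D_{n,N}(*,k)$ is obtained from $M^{N-n}(*,k)$ by dividing by the positive scalar $\sum_l M^{N-n}(l,k)\mathrm{Vol}(P_n(l))$. That scalar is itself $\lambda^{N-n}w(k)\sum_l v(l)\mathrm{Vol}(P_n(l))$ up to a vanishing error, so the common prefactor $\lambda^{N-n}w(k)$ cancels between numerator and denominator. What remains is
\[
D_{n,N}(*,k) \xrightarrow[N\to\infty]{} \frac{v}{\sum_l v(l)\mathrm{Vol}(P_n(l))},
\]
independent of $k$. Hence $D_{n,N}$ is asymptotically rank one for every $n$, so Theorem \ref{unique} applies and $(X_\rrr,\R^d)$ is uniquely ergodic.

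The only subtle point, and really the only place the argument could go wrong, is the cancellation of the $k$-dependent scalar $w(k)$ in both numerator and denominator of $D_{n,N}(*,k)$; this is why working with the volume-normalized direction matrix rather than with $M^{N-n}$ itself is essential. Everything else is routine Perron--Frobenius applied to a single primitive matrix, exploiting the fact that transition-regularity collapses the whole family $\{M_{n,N}\}$ into powers of $M$.
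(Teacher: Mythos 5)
Your proof is correct and follows exactly the route the paper intends: the corollary is stated without proof immediately after Theorem \ref{unique}, the implicit argument being precisely that transition-regularity gives $M_{n,N}=M^{N-n}$ for a single primitive matrix $M$, whose powers have columns aligning with the Perron eigenvector, so each $D_{n,N}$ is asymptotically rank one. Your careful treatment of the cancellation of the $k$-dependent factor $w(k)$ in the volume-normalization is the right detail to check, and it goes through since $w$ is strictly positive.
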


What remains is to find checkable conditions on the transition
matrices $M_{n,N}$ that imply that the direction matrices $D_{n,N}$
are asymptotically rank one.  For the $n$-th transition matrix
$M_{n-1,n}$, let
\begin{equation} \delta_n = \min_k \left ( \frac{\min_i 
M_{n-1,n}(i,k)}{\max_i M_{n-1,n}(i,k)}
\right )
\label{checkable_condition}
\end{equation}
This measures the extent to which the columns of $M_{n-1,n}$ are
unbalanced.

\begin{thm} \label{unique0}
If $\sum_n \delta_n$ diverges, then $\rrr$ is primitive and
for each $n$ the family $D_{n,N}$
is asymptotically rank 1. 
\end{thm}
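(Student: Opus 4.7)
The plan is to treat the two assertions separately, with the main effort on the asymptotic rank-one claim.

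For primitivity: $\delta_n > 0$ is equivalent to $M_{n-1,n}$ having strictly positive entries, and divergence of $\sum_n \delta_n$ guarantees infinitely many such $n$. Given $n_0$, choose $m > n_0$ with $\delta_m > 0$. Each column of $M_{n_0, m} = M_{n_0, m-1} M_{m-1, m}$ is a strictly positive linear combination of the columns of $M_{n_0, m-1}$, and under the standing nontriviality assumption on $X_\rrr$ (enlarging $m$ if necessary so every $n_0$-supertile type has appeared in some $(m-1)$-supertile), these combinations have all positive entries. Hence $M_{n_0, m}$ is strictly positive and $\rrr$ is primitive.

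The core of the proof is a one-step contraction of the polytopes $\poly_{n,N}$. A direct unpacking of the definitions (using $M_{n,N+1}(*,k)=\sum_j M_{N,N+1}(j,k) M_{n,N}(*,j)$ and $Vol(P_{N+1}(k)) = \sum_l M_{N,N+1}(l,k) Vol(P_N(l))$) shows
\[
D_{n,N+1}(*,k) = \sum_j \alpha_{j,k}\, D_{n,N}(*,j), \qquad
\alpha_{j,k} = \frac{M_{N,N+1}(j,k)\,Vol(P_N(j))}{Vol(P_{N+1}(k))},
\]
with $\sum_j \alpha_{j,k} = 1$. The key arithmetic estimate, obtained by dividing numerator and denominator of $\alpha_{j,k}$ by $\max_l M_{N,N+1}(l,k)$ and invoking the definition of $\delta_{N+1}$, is
\[
\alpha_{j,k} \geq \delta_{N+1}\,\gamma_j, \qquad \gamma_j := \frac{Vol(P_N(j))}{\sum_l Vol(P_N(l))}.
\]
Since $\sum_j \gamma_j = 1$, I can then split
\[
D_{n,N+1}(*,k) = \delta_{N+1}\,\bar v + (1-\delta_{N+1})\,u_k,
\]
where $\bar v := \sum_j \gamma_j\, D_{n,N}(*,j)$ is the volume-weighted centroid of the columns of $D_{n,N}$ (crucially, independent of $k$) and each $u_k$ is a convex combination of those columns, hence lies in $\poly_{n,N}$.

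Taking differences for two indices $k, k'$ cancels $\bar v$, so
\[
D_{n,N+1}(*,k) - D_{n,N+1}(*,k') = (1-\delta_{N+1})(u_k - u_{k'}),
\]
and hence in any norm on the hyperplane of volume-normalized vectors,
\[
\mathrm{diam}(\poly_{n,N+1}) \leq (1-\delta_{N+1})\,\mathrm{diam}(\poly_{n,N}).
\]
Iterating yields $\mathrm{diam}(\poly_{n,N}) \leq \mathrm{diam}(\poly_{n,n+1}) \prod_{k=n+2}^{N}(1-\delta_k)$, and since $\delta_k \in [0,1]$ with $\sum \delta_k = \infty$, this product vanishes as $N \to \infty$. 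Because the polytopes are nested, $\poly_n = \bigcap_{N > n}\poly_{n,N}$ is a single point, which is precisely the asymptotic rank-one property.

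The only nontrivial ingredient is the pointwise lower bound $\alpha_{j,k} \geq \delta_{N+1}\gamma_j$; once it is in hand, the observation that the anchor $\bar v$ does not depend on the target column $k$ is what upgrades a columnwise estimate into a uniform contraction of the whole polytope, and the remainder is bookkeeping. Primitivity, by contrast, is combinatorial and presents no real obstacle.
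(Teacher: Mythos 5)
Your proof is correct and follows essentially the same route as the paper's: your anchor $\bar v=\sum_j \gamma_j D_{n,N}(*,j)$ is exactly the paper's volume-normalized direction $\hat v_{n,N}$ of the column sum of $M_{n,N}$, and your coefficient bound $\alpha_{j,k}\ge \delta_{N+1}\gamma_j$ is just an explicit rewriting of the paper's step of extracting $m_{N+1,i}$ times the all-ones vector from each column of $M_{N,N+1}$, yielding the identical contraction $\mathrm{diam}(\poly_{n,N+1})\le(1-\delta_{N+1})\,\mathrm{diam}(\poly_{n,N})$ and the same conclusion via the divergent sum forcing $\prod_k(1-\delta_k)=0$. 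The primitivity argument is likewise the same observation (made slightly more carefully than in the paper) that $\delta_m>0$ infinitely often suffices.
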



\begin{proof} 
First we show that the diameter of $\poly_{n,N+1}$ is bounded 
by $( 1 - \delta_{N+1} )$
times the diameter of $\poly_{n,N}$.
Let $v_{n,N}$ be the sum of the columns of $M_{n,N}$, and let
$\hat v_{n,N}$ be the direction of $v_{n,N}$. Let $m_{N+1,i}$ be the
smallest entry of the $i$th column of $M_{N, N+1}$ (which may be
zero).  The $i$th column of $M_{N,N+1}$ is then $m_{N+1,i}\left
  ( \begin{smallmatrix} 1 \cr \vdots \cr 1 \end{smallmatrix} \right
)$, plus additional terms, so the $i$th column of $M_{n,N+1}$ is
$m_{N+1,i} v_{n,N}$, plus an additional linear combination of
columns of $M_{n,N}$. This means that the direction of the $i$th
column of $M_{n,N+1}$ is a weighted average of $\hat v_{n,N}$ and an
unknown element of $\poly_{n,N}$, with $\hat v_{n,N}$ having weight at
least $\delta_{N+1}$. Thus the direction of each column of
$M_{n,N+1}$, and hence $\poly_{n,N+1}$ lies in the convex set
$\delta_{N+1} \hat v_{n,N}+ (1 - \delta_{N+1})\poly_{n,N}$, a set whose
diameter is $(1 - \delta_{N+1})$ times the diameter of $\poly_{n,N}$.

If $\sum \delta_n$ diverges, then $\delta_n$ is nonzero
infinitely often, so the fusion rule is primitive.
Furthermore, the infinite product $\prod_{k=n+1}^\infty
(1 - \delta_k)$ equals zero. Thus
$\poly_n$ has diameter zero, and is a single point. 
\end{proof}

\begin{cor}\label{primitive.ergodic}
If $\rrr$ is a strongly primitive, van Hove and recognizable fusion rule whose transition matrices
$M_{n-1,n}$ have uniformly bounded elements, then $(X_\rrr, \R^d)$ is 
uniquely ergodic.
\end{cor}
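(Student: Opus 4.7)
The plan is to reduce the corollary to Theorem \ref{unique0} combined with Theorem \ref{unique}. Since the hypotheses already include van Hove, recognizable, and (strong) primitivity, by Theorem \ref{unique} it suffices to show that for each $n$ the family $D_{n,N}$ is asymptotically rank 1. Theorem \ref{unique0} gives this conclusion whenever $\sum_n \delta_n$ diverges, where $\delta_n$ is the ratio defined in equation (\ref{checkable_condition}). So the entire task becomes a straightforward lower bound on $\delta_n$.

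First, I would observe that the entries of each transition matrix $M_{n-1,n}$ are non-negative integers (they count supertiles). Strong primitivity says every entry $M_{n-1,n}(i,k)$ is strictly positive, hence $M_{n-1,n}(i,k) \geq 1$ for all $i,k$. The hypothesis of uniformly bounded entries gives a constant $B$, independent of $n$, such that $M_{n-1,n}(i,k) \leq B$ for all $i,k,n$. Combining these bounds, for every column index $k$,
\[
\frac{\min_i M_{n-1,n}(i,k)}{\max_i M_{n-1,n}(i,k)} \;\geq\; \frac{1}{B},
\]
so $\delta_n \geq 1/B$ for all $n$. Consequently $\sum_n \delta_n = \infty$.

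Applying Theorem \ref{unique0}, we conclude that for each $n$ the family $D_{n,N}$ is asymptotically rank 1; equivalently, $\poly_n$ is a single point for every $n$, and so $\poly_\infty$ is a single point. By Corollary \ref{poly.parameterizes}, the set of invariant Borel probability measures on $(X_\rrr,\R^d)$ is a singleton, which is precisely unique ergodicity via Theorem \ref{unique}.

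The only place requiring any care is verifying the hypotheses of Theorem \ref{unique0}: we used that strong primitivity gives the uniform lower bound of $1$ on matrix entries, and that this is compatible with Theorem \ref{unique0}'s conclusion that $\rrr$ is primitive (which is automatic here since strong primitivity is stronger). There is no genuine obstacle — the proof is essentially a one-line estimate — but the key observation is that strong primitivity combined with a uniform upper bound on matrix entries forces the columns of $M_{n-1,n}$ to be uniformly well-balanced in the sense measured by $\delta_n$.
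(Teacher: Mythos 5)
Your proof is correct and follows essentially the same route as the paper's: strong primitivity gives integer entries at least $1$, the uniform bound gives entries at most $B$, hence $\delta_n \ge 1/B$, so $\sum_n \delta_n$ diverges and Theorem \ref{unique0} together with Theorem \ref{unique} yields unique ergodicity. The paper's own proof is exactly this one-line estimate.
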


\begin{proof} If the smallest matrix element of $M_{n-1,n}$ is at least 1 and
the largest is at most $K$, then each $\delta_n \ge 1/K$. Thus 
$\sum_n \delta_n$ 
diverges and every $D_{n,N}$ is asymptotically rank 1.  By Theorem \ref{unique},
$(X_\rrr,\R^d)$ is uniquely ergodic. \end{proof}

\subsection{Transversals, towers, and rank}
In tiling theory, especially the aperiodic order and quasicrystal
branches, the concept of the transversal is an essential component to
many arguments.  For instance, it is used for computing the
$C^*$-algebras and $K$-theory as in \cite{Kel-Put} and references
therein, and it is used for gap-labelling in \cite{BBG}.  In ergodic
theory, the concept of towers and especially the Rohlin Lemma (also
called the Kakutani-Rokhlin or Halmos-Rokhlin Lemma) is a tool that
has been used to great effect (see for instance
\cite{Pet,Ornstein-Weiss}) .  One notable result \cite{AOW} that uses
towers and the lemma is that any aperiodic measure-preserving
transformation on a standard Lebesgue space can be realized as a
cutting and stacking transformation.  Towers are used to define the
notion of rank, which is intimately related to spectral theory.

For convenience, we will assume that $G = G_t$ and that our supertile
sets $\ppp_n$ are described as follows. We position each prototile in
$\ppp_0$ so that the origin is in its interior, and the place where
the origin sits is called the {\em control point} of the prototile.
In a prototile that has been translated by some element $\vec v \in
\R^d$ we call $\vec v$ the {\em control point} of the new tile.  Each
element of $\ppp_1$ is positioned such that the origin is on the
control point of one of the prototiles it contains, and this point
will be considered to be the control point of the 1-supertile.
Likewise, we situate the elements of $\ppp_2, \ppp_3,$ etc. in such a
way that the origin forms the control point of the $n$-supertile and
lies atop the control point of the $\ppp_{n-1}$-tile at the origin.

\begin{dfn}
  The {\em transversal} of $X_{\rrr}$ is the set of all tilings
  positioned with the origin at the control point of the tile that
  contains it.  If $\rrr$ is recognizable, the {\em $n$-transversal}
  of $X_{\rrr}$ is the set of all tilings positioned so that the
  origin is at the control point of the $n$-supertile containing it.
\end{dfn}

(If $G_r$ is nontrivial, the situation is only slightly more complicated.
For each prototile, we fix a
preferred orientation. The transversal of a tiling space is the set of
tilings with the origin at a control point, and with the tile
containing the origin in the chosen orientation. The $n$-transversals
are defined similarly.)

The transversal of $X_\rrr$ has a natural partition into
$j_0$ disjoint sets, one for each type of tile. Each of these can be
decomposed into pieces, one for each way that the tile containing the origin
can sit in a 1-supertile, and this partitioning process can be continued
indefinitely. 
The $n$-transversal of $X_{\rrr}$ can be thought of as the
transversal of $X_{\rrr}^n$, and can likewise be partitioned.  
When the fusion rule has finite local
complexity, the transversal is a totally disconnected set.  The $n$-transversals
will form the {\em base} for the $n$th tower representation.

In one-dimensional discrete dynamical systems, phase space can be visualized 
as a stack of Borel sets placed one above the other with
the transformation taking each set to the one directly above it,
except the top one, on which the action is not visualized.  This
representation of the system is known as a Rohlin tower.
When the action is continuous,
multidimensional, or by an unusual group, the ``towers'' no longer
resemble physical towers, but the term still applies. 
The concept of Rohlin towers for groups other than $\Z$,
and in particular for $\R^d$, is
investigated in \cite{Ornstein-Weiss,
  Aimee-Ayse,Danilenko-Silva,Robbie-Ayse} and our definitions are
drawn from these.  Let $(X,\mathfrak{B},\mu)$ be a probability space
acted on by some amenable group $G$ to produce an ergodic dynamical system.

\begin{dfn}
\begin{enumerate}
\item  Let $B \subset \mathfrak{B}$ and let $F \subset \R^d$ be relatively
  compact, and suppose that $g(B) \cap h(B) = \emptyset$ for any $g
  \neq h \in F$.  In this case we call $(B,F)$ a {\em Rohlin tower}
  with {\em base} $B$, {\em shape} $F$, and {\em levels} $g(B)$.
\item A {\em tower
    system} is a finite list of towers $\mathcal{F}= (B_1, F_1), ..., (B_n,F_n)$
  such that all levels are pairwise disjoint.  
 \item The {\em support} of a tower system is the union of its levels and the {\em
 residual set} is the complement of the support in $X$.
 \item
  A sequence $\mathcal{F}^k$ of tower
  systems is said to converge to $\mathfrak{B}$ if for every Borel set
  $A \in \mathfrak{B}$ and every $\epsilon > 0$ there is an $N$ such
  that for all $k > N$ 
there is a union of levels of $\mathcal{F}^k$ whose symmetric difference
from $A$ measures less than $\epsilon$.
\end{enumerate}
\end{dfn}

Recognizable fusion tiling dynamical systems come automatically
equipped with tower systems that converge to the Borel
$\sigma$-algebra $\mathfrak{B}(X_{\rrr})$.  The $n$th tower system
will have one tower for each prototile $P_n(j) \in \ppp_n$, for a
total of $j_n$ towers.  The base of the $j$th tower is the set
$B_n(j)$ of all tilings in the $n$-transversal that have a copy of
$P_n(j)$ with its control point at the origin.  The shape of the $j$th
tower, denoted $F_n(j)$, depends on whether $G_r$ is trivial.  If so,
then the shape is the set of all $\vec v \in G_t$ such that the origin
is in the interior of $P_n(j) + \vec v$.  This shape is, up to sign,
the same as the interior of $P_n(j)$ itself, thus earning the name
``shape''.  If instead $G_r$ is nontrivial, the tower construction
must be modified to accomodate tilings that have discrete rotational
symmetry.  In this case $F_n(j)$ is the set of all $g \in G$ for which
the origin is in $g(P_n(j))$, provided $P_n(j)$ has no symmetry.  If
it does, we must restrict the rotational portion of $F_n(j)$ to keep
the levels disjoint.

In ergodic theory an important idea is that of rank.  A
dynamical system is said to have {\em rank $r$} if for every $\epsilon>0$
there is a tower system $(B_1, F_1), ..., (B_r,F_r)$ that approximates
all elements of $\mathfrak{B}$ up to measure $\epsilon$, where $r$ is
the smallest integer for which this is possible.  It is well-known for
substitution sequences and self-affine tilings that the rank is
bounded by the size of the alphabet or the number of prototiles, since
every tile type gets its own tower for each application of the
substitution.  In the case of fusion tilings, the situation is only
slightly more complicated and we can say
\begin{equation}
\text{ rank}(X_{\rrr},G) \,\, \le \,\, \liminf_{n \to \infty} j_n
\end{equation}
In general, rank bounds spectral multiplicity.

\subsection{Groups other than $\R^d$}\label{othergroups}
For much of this section we have assumed that $G=\R^d$. However, the results
can readily be adapted to tiling spaces that involve other groups. 
In this section
we indicate what changes have to be made when $G_t$ is a proper subgroup
of $\R^d$, when $G_r$ is nontrivial, or both. 

In general, $G_t$ is the product of two groups, namely a continuous 
translation in a subspace $E$ of $\R^d$, and a discrete lattice $L$ in the
orthogonal complement of $E$.  In place of Lebesgue measure on $\R^d$,
the measure on $G_t$ is the product of Lebesgue measure on $E$ 
and counting measure on $L$. Frequencies are defined as before
as occurrences per unit volume in $G_t$. In fact, the ergodic theorem and 
Rohlin towers were first developed for discrete group actions and only later
extended to continuous groups. 

Having $G_r$ nontrivial is more of a complication, especially if $G_r$
is continuous, as with the pinwheel tiling. The ergodic theorem still
applies, since we can first average over $G_r$ and then average
over $G_t$, but the $G$-orbit of a tiling
can no longer be identified with Euclidean $\R^d$.  
When $G_r$ is continuous, the frequency of a patch is no longer ``number per
unit volume'', but is ``number per unit volume per unit angle'', and may 
depend on angle.  

If the group $G'$ that defines our dynamics is the same as the group $G$
used to construct the tiling, then invariant measures are parametrized exactly
as before, by $\poly_\infty$, or equivalently by 
sequences of well-defined supertile frequencies. 
The only difference is that 
$\rho_n(i)$ is the sum or integral over angle of the frequency of the
supertile $P_n(i)$. That is, it counts the average number per unit area 
of $P_n(i)$'s appearing in {\em any} orientation.

If $G'$ is different from $G$, then we must distinguish between the
$G$-invariant measures, which are parametrized by $\Delta_\infty$, and
the $G'$-invariant measures, which may not be. Determining whether
every $G'$-invariant measure is $G_r$-invariant is a separate
computation.

\section{Spectral theory, entropy, and mixing}
\label{Spectral.section}

A vector $\vec \alpha \in \R^d$ is a {\em topological eigenvalue} of
translation if there is a continuous map $f: X_\rrr \to S^1$, where
$S^1$ is the unit circle in $\C$, such that, for every $\T \in X_\rrr$
and every $\vec v \in \R^d$,
\begin{equation}\label{eig-def}
f(\T-\vec v) = \exp(2 \pi i {\vec\alpha} \cdot \vec v) f(\T).
\end{equation} 
The map $f$ is called a {\em topological eigenfunction}.
{\em Measurable} eigenvalues and eigenfunctions 
are defined similarly, only for $f$
measurable rather than continuous. Of course, since
continuous functions are measurable, every topological eigenvalue is a
measurable eigenvalue.  Given a translation-invariant measure $\mu$ on
$X_\rrr$, one can also ask how translations act on $L^2(X_\rrr,
\mu)$. The pure point part of the spectral measure of the translation
operators is closely related to the set of measurable eigenvalues.
$(X_\rrr,\mu)$ is said to have ``pure point spectrum'' if the span of
the eigenfunctions is dense in $L^2(X_\rrr,\mu)$.

The set of topological eigenvalues, the set of measurable eigenvalues, and
the spectral measure of the translation operators are all $G_r$-invariant. 
If $g \in G_r$ and $f$ is an eigenfunction with eigenvalue ${\vec\alpha}$,
then $f \circ g$ is an eigenfunction with eigenvalue $g^{-1}({\vec\alpha})$. 
If $G_r$ is continuous,
then this means that the number of nontrivial eigenvalues is either uncountable
or zero. The first is impossible, as $L^2(X_\rrr,\mu)$ is separable,
and as every topological eigenvalue is a measurable eigenvalue. 
The search for (topological or measurable) eigenvalues only has meaning, then,
when $G_r$ is discrete, in which case we can increase our prototile set by
counting each orientation separately and take $G_r$ trivial.

{\bf Standing assumption for Section \ref{Spectral.section}:}  For the remainder
of this section, we assume $G=\R^d$.  

A measurable dynamical system 
is said to be {\em weakly mixing} 
if there are no nontrivial measurable eigenvalues.
A topological dynamical system is {\em topologically weakly mixing} if there
are no nontrivial topological eigenvalues. For primitive 
substitution tiling spaces
there is no distinction between the two sorts of weak mixing, 
as it has been proven \cite{Host, Clark-Sadun1} that every 
measurable eigenfunction (with respect to the unique invariant measure) can
be represented by a continuous function. 
For fusion tilings, the situation is more subtle. In Theorem \ref{eig-criterion} we develop
necessary and sufficient conditions for a vector to be a topological
eigenvalue.   This theorem is similar to 
Theorem 3.1 of \cite{CGM}, and of earlier 1-dimensional results of \cite{BDM}. 
The key differences are that we work with $\R^d$ actions rather than
$\Z^d$ actions and that we do not assume linear repetitivity.

Unlike the substitution situation it is possible for a fusion tiling space to have a 
measurable eigenvalue that is
not a topological eigenvalue. In example \ref{scrambledFib} we exhibit a fusion tiling space that
has pure point measurable spectrum but that is topologically weakly mixing. After this example was 
announced, it was noted \cite{KS} that the vertices of this tiling form a diffractive point pattern that is not Meyer.  

\subsection{Topological eigenvalues}

For self-affine substitution tilings  there are well-established criteria
for a vector being an (topological or measurable) 
eigenvalue \cite{Sol.self.similar}. 
Given a substitution with 
stretching map $L$,  there is a finite list of vectors
$\vec v_i$ such that ${\vec\alpha}$ is an eigenvalue if and only if 
\begin{equation}
\label{sub-test}
 {\vec\alpha} \cdot L^n (\vec v_i) \to 0 \pmod 1
\end{equation}
for each $i$. 

Our first task is to construct an analogous criterion for 
topological eigenvalues of fusion
tilings. Assuming strong primitivity, each $(n+2)$-supertile contains
multiple copies of each $n$-supertile (at least one per $(n+1)$
supertile).  Let $\vvv^n$ be the set of relative positions of two
$n$-supertiles, of the same type, within an $(n+2)$-supertile.  This
is a finite set, since there are only finitely many kinds of
$(n+2)$-supertiles and each $(n+2)$-supertile contains only finitely
many $n$-supertiles.  We call the elements of $\vvv^n$ {\em return
  vectors}.
For each $\vec\alpha \in \R^d$, let 
$\eta_n({\vec\alpha}) = \max_{\vec v \in \vvv^n} 
|\exp(2 \pi i {\vec\alpha} \cdot \vec v)- 1|.$ 
\begin{thm}\label{eig-criterion}
Let $\rrr$ be a strongly primitive and recognizable van Hove
fusion rule with finite local complexity. A vector
${\vec\alpha} \in \R^d$ is a topological eigenvalue if and only if 
$\sum_n \eta_n({\vec\alpha})$ converges. 
\end{thm}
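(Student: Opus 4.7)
The plan is to establish both directions of the biconditional by linking \emph{approximate periods} of a reference tiling $\T_0 \in X_\rrr$ with the level-wise return vector sets $\vvv^n$.

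For sufficiency, assume $\sum_n \eta_n(\vec\alpha)<\infty$. Since $\rrr$ is primitive the orbit of any $\T_0$ is dense in $X_\rrr$, so we attempt to define $f(\T_0-\vec v):=e^{2\pi i \vec\alpha\cdot\vec v}$ on the orbit and extend continuously to $X_\rrr$. This extension exists if and only if for every sequence $\vec u_k\in\R^d$ with $\T_0-\vec u_k\to\T_0$ (approximate periods of $\T_0$) one has $e^{2\pi i\vec\alpha\cdot\vec u_k}\to 1$. Recognizability gives radii $r_n$ such that agreement on the ball of radius $r_n$ forces agreement of $n$-supertiles at the origin. Hence, for $\vec u_k$ with agreement at level $n(k)\to\infty$, the $n(k)$-supertile of $\T_0$ at the origin reappears at the point $\vec u_k$, and we decompose $\vec u_k=\sum_{m\ge n(k)}\vec\delta_m$ where each $\vec\delta_m\in\vvv^m$ arises by walking the two copies of the supertile up the fusion hierarchy one level at a time and recording, at each level, the return vector needed to move the canonical sibling into the actual one. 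The estimate $|e^{2\pi i\vec\alpha\cdot\vec u_k}-1|\le\sum_{m\ge n(k)}\eta_m$ together with summability yields continuity, and the eigenfunction identity holds by construction.

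For necessity, let $f$ be a continuous eigenfunction with eigenvalue $\vec\alpha$. Uniform continuity of $f$ on the compact $X_\rrr$, combined with strong primitivity (which, for each $\vec v\in\vvv^n$, supplies tilings $\T_1$ and $\T_2=\T_1-\vec v$ agreeing on a common $(n+2)$-supertile around the origin) and the van Hove property (so that the inradius of those $(n+2)$-supertiles tends to infinity), immediately yields $\eta_n(\vec\alpha)\to 0$ from $|e^{2\pi i\vec\alpha\cdot\vec v}-1|=|f(\T_2)/f(\T_1)-1|$. To upgrade this to summability we argue by contradiction: assuming $\sum_n\eta_n=\infty$, inductively pick $\vec\delta_n\in\vvv^n$ realizing (up to a universal constant) $\eta_n$ and with the arguments of $e^{2\pi i\vec\alpha\cdot\vec\delta_n}$ aligned in sign. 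By strong primitivity and recognizability, splice these $\vec\delta_n$ together into a single approximate period $\vec u_K=\sum_{n\le K}\vec\delta_n$ of some tiling $\T$, by nesting inside ever larger supertiles pairs of sibling supertiles realizing each $\vec\delta_n$. Then $e^{2\pi i\vec\alpha\cdot\vec u_K}\to 1$ by the eigenfunction identity, whereas the aligned construction forces $|e^{2\pi i\vec\alpha\cdot\vec u_K}-1|$ to stay bounded away from zero, a contradiction.

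The main obstacle in both directions is the same: connecting individual level-wise return vectors to honest approximate periods of a tiling. In sufficiency, the issue is decomposing an arbitrary approximate period as a telescoping sum of $\vvv^m$-vectors, which requires recognizing the correct $(m+2)$-supertile containing the ``residual'' misalignment at each level. In necessity, the harder inverse step is to synthesize an approximate period with prescribed level-wise contributions; this is where the 1-dimensional arguments of BDM and CGM must be genuinely lifted to the $d$-dimensional van Hove setting, using the fusion hierarchy in place of a single self-map.
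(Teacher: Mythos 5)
Your overall strategy is the same as the paper's. The sufficiency half is essentially identical: define $f$ on a single orbit by $f(\T-\vec x)=e^{2\pi i\vec\alpha\cdot\vec x}$, decompose the displacement between two corresponding points in same-type $n$-supertiles inside a common $N$-supertile as a telescoping sum $\sum_{k=n}^{N-2}\vec v_k$ with $\vec v_k\in\vvv^k$ (this is exactly Lemma \ref{lemma1}, proved by induction on $N$ using strong primitivity to find the intermediate sibling at each level), and then bound $|e^{2\pi i\vec\alpha\cdot(\vec y-\vec x)}-1|$ by the tail $\sum_{k\ge n}\eta_k$. The only detail you omit is the case where the two points never lie in a literal common supertile of $\T$, which the paper handles by FLC (the patch containing both is congruent to one inside some $N$-supertile); this is minor.

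In the necessity direction your plan is again the paper's contrapositive, but two steps would fail as written. First, the splicing: you propose to realize $\vec u_K=\sum_{n\le K}\vec\delta_n$ by nesting, level by level, the pair of siblings witnessing $\vec\delta_n$ inside the pair witnessing $\vec\delta_{n+1}$. But $\vec\delta_n\in\vvv^n$ is witnessed by two $n$-supertiles inside a common $(n+2)$-supertile, whereas the siblings for $\vec\delta_{n+1}$ are $(n+1)$-supertiles; an $(n+2)$-supertile cannot be embedded in an $(n+1)$-supertile, so consecutive levels cannot be nested. The paper's Lemma \ref{lem3} fixes this by splicing only return vectors from levels $n,n+3,n+6,\ldots$ (each new sibling pair sits three levels higher, leaving room to place a copy of the previous configuration via strong primitivity), together with the observation that a divergent series of non-negative terms diverges along at least one residue class mod $3$. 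Second, you cannot in general choose a single $\vec\delta_n$ that both nearly realizes $\eta_n$ and has the argument of $e^{2\pi i\vec\alpha\cdot\vec\delta_n}$ of a prescribed sign; the maximizer's sign is not at your disposal. The paper instead accumulates a finite block of levels whose $\eta$-sum exceeds a fixed threshold and then discards the minority quadrant, keeping a sub-block with aligned phases whose total phase is still bounded away from $0$. Both repairs are routine, but together they constitute the one genuinely new device of the necessity direction, and your sketch is missing it.
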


For primitive substitution tilings,
$\vvv^{n+1} = L \vvv^n$ and $\eta_n({\vec\alpha})$ either goes to zero
exponentially fast, or does not go to zero at all
\cite{Sol.self.similar}. In such cases, the convergence of $\sum_n
\eta_n({\vec\alpha})$ is equivalent to $\eta_n({\vec\alpha})\to 0$, which is
equivalent to the criterion (\ref{sub-test}), where the vectors $\vec
v_i$ range over $\vvv^0$.  

\begin{proof}
  Since $X_\rrr$ is minimal, a continuous eigenfunction with a given
  eigenvalue is determined by its value on a single tiling $\T$.  Fix
  $\T \in X_\rrr$ and ${\vec\alpha} \in \R^d$ and define $f(\T)=1$. For
  each $\vec x \in \R^d$ let $f(\T-\vec x) = \exp(2 \pi i {\vec\alpha} \cdot 
\vec x)$.  If this function is continuous on the orbit of $\T$, then it extends to an eigenfunction
  on all of $X_\rrr$. If it is not continuous, then ${\vec\alpha}$ cannot be
  a topological eigenvalue.


  Suppose that $\sum_n \eta_n({\vec\alpha})$ converges.  We will show that $f$
  is continuous on the orbit of $\T$.  Choose
  $\epsilon > 0$ and pick $n$ large enough that $\sum_{k=n}^\infty
  \eta_n({\vec\alpha}) < \epsilon$. We will show that if $\T-\vec x$ and 
$\T-\vec y$
  agree to the $n$th recognizability radius $\rho_n$, then $f(\T-\vec x)$ and
  $f(\T-\vec y)$ are within $\epsilon$.  The following lemma states that
  $\vec y-\vec x$ can be expressed as a sum of return vectors.

\begin{lem}\label{lemma1}
Suppose that $\vec x$ and $\vec y$ are corresponding points in
$n$-supertiles of the same type within the same $N$-supertile, with
$N \ge n+2$. 
Then $\vec y-\vec x$ can be written as 
$\sum _{k=n}^{N-2} \vec v_k$, where $\vec v_k \in \vvv^k$.
\end{lem}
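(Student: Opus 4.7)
The plan is to prove the lemma by induction on $N - n$, using strong primitivity at each step to extract one return vector at a time.

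For the base case $N = n+2$, the conclusion is immediate: $\vec x$ and $\vec y$ are corresponding points in two $n$-supertiles of the same type sitting inside a common $(n+2)$-supertile, so by the definition of $\vvv^n$ their difference $\vec y - \vec x$ is itself an element of $\vvv^n$, and the sum $\sum_{k=n}^{n} \vec v_k$ consists of the single vector $\vec v_n = \vec y - \vec x$.

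For the inductive step (assuming $N \ge n+3$), I will peel off one return vector at the top level. Let $S^{N-2}_x$ denote the $(N-2)$-supertile of $\T$ containing $\vec x$, and let $S^{N-1}_y$ denote the $(N-1)$-supertile containing $\vec y$; both sit inside the common $N$-supertile $S^N$. By strong primitivity (applied at level $N-2 \to N-1$, which is legitimate because $N-2 \ge 1$ in this case), $S^{N-1}_y$ contains at least one $(N-2)$-supertile $T$ of the same type as $S^{N-2}_x$. Since this $T$ and $S^{N-2}_x$ are $(N-2)$-supertiles of the same type both lying in $S^N$, the vector $\vec v_{N-2}$ joining the point in $T$ corresponding to $\vec x$ to $\vec x$ itself is by definition an element of $\vvv^{N-2}$. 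Let $\vec z$ denote that corresponding point in $T$, so $\vec z - \vec x = \vec v_{N-2}$.

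The point $\vec z$ sits in an $n$-supertile of type $i$ inside $T$ (namely the $n$-supertile corresponding under the type-preserving identification to $S^n_x \subset S^{N-2}_x$), and its position inside that $n$-supertile matches the position of $\vec x$ inside $S^n_x$. Thus $\vec z$ and $\vec y$ are corresponding points in $n$-supertiles of type $i$ lying inside the common $(N-1)$-supertile $S^{N-1}_y$. Since $(N-1) - n = (N-n) - 1$, the inductive hypothesis applies and produces $\vec y - \vec z = \sum_{k=n}^{N-3} \vec v_k$ with $\vec v_k \in \vvv^k$. Adding $\vec v_{N-2} = \vec z - \vec x$ yields $\vec y - \vec x = \sum_{k=n}^{N-2} \vec v_k$, as required.

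The only real obstacle is bookkeeping: making sure the phrase ``corresponding point'' is used consistently, so that the intermediate point $\vec z$ really does sit at the same relative location inside its $n$-supertile as $\vec x$ does inside $S^n_x$, and so that the return vector $\vec v_{N-2}$ genuinely encodes the offset between two same-type $(N-2)$-supertiles inside an $N$-supertile (rather than inside an $(N-1)$-supertile). The strong primitivity hypothesis is essential precisely to guarantee the existence of the intermediate $(N-2)$-supertile $T$ inside $S^{N-1}_y$; the telescoping structure then does the rest.
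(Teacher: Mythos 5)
Your proof is correct and is essentially the paper's own argument: both proceed by induction, peeling off a single return vector in $\vvv^{N-2}$ by using strong primitivity to locate a copy of the $(N-2)$-supertile containing $\vec x$ inside the $(N-1)$-supertile containing $\vec y$, and then applying the inductive hypothesis to the intermediate point $\vec z$ and $\vec y$ within that $(N-1)$-supertile. The only differences are cosmetic (indexing the induction by $N-n$ rather than $N$, and your explicit bookkeeping remarks).
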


\begin{proof}[Proof of lemma] 
  For each $n$, we work by induction on $N$. The base case $N=n+2$
  follows from the definition of $\vvv^n$. Now suppose the lemma is
  true for $N=N_0$, and suppose that $\vec x$ and $\vec y$ sit in the same
  $(N_0+1)$-supertile. The point $\vec x$ sits in an $(N_0-1)$ supertile
  $S_x$, say of type $i$, and $\vec y$ sits in an $N_0$-supertile, say of
  type $j$. By strong primitivity, there is an $(N_0-1)$ supertile
  $S_y$ of type $i$ in the $N_0$-supertile that contains $\vec y$. Let $\vec z$
  be the point in $S_y$ corresponding to where $\vec x$ sits in $S_x$.
  (See Figure \ref{evalu-lemma-1}.)
\begin{figure}[h]
\includegraphics[width=6in]{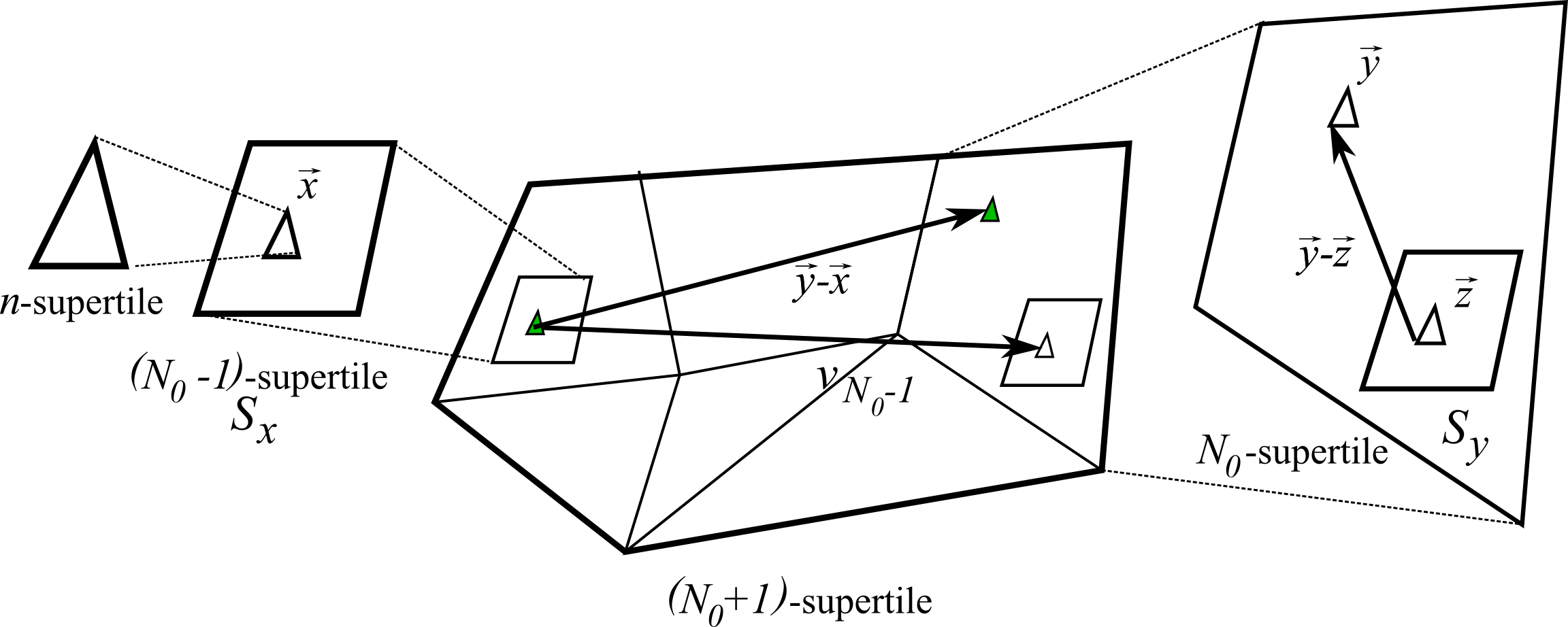}
\caption{The induction step identifies a return between the
  $n$-supertiles (shown as shaded triangles) inside their $(N_0 +
  1)$-supertile.}
\label{evalu-lemma-1}
\end{figure}
Then $\vec z-\vec x$ is a return vector 
from $S_x$ to $S_y$ which we denote $\vec v_{N_0-1}
\in \vvv^{N_0-1}$.
Meanwhile, $\vec y$ and $\vec z$ sit in the same 
kind of $n$-supertile within the
same $N_0$-supertile, so $\vec y-\vec z = 
\sum_{k=n}^{N_0-2} \vec v_k$.   This means that
$\vec y-\vec x = \sum_{k=1}^{N_0-1} \vec v_k$, as desired. 
\end{proof}
 
If $\vec x$ and $\vec y$ lie in the same 
$N$-th order supertile, the lemma implies that 
$\vec y-\vec x = \sum_{k=n}^{N-2} \vec v_k$, so
\begin{eqnarray*} 
|f(\T-\vec y)-f(\T-\vec x)| &=& \left |\exp\left (2 \pi i {\vec\alpha} \cdot 
(\vec y-\vec x)\right )
- 1 \right | \\
\le \sum_{k=n}^{N-2} |\exp(2 \pi i {\vec\alpha} \cdot \vec v_k) - 1| 
& \le & \sum_{k=n}^{N-2} \eta_n({\vec\alpha}) < \epsilon.
\end{eqnarray*}
Even if $\vec x$ and $\vec y$ do not lie in the same $N$-supertile of $T$ for
any $N$, it is still true that any patch containing $\vec x$ and $\vec y$ is
congruent to a patch that lies within an $N$-supertile, so $\vec y-\vec x$ 
still
takes the form $\sum_{k=n}^{N-2} \vec v_k$ for some $N$ and we still
obtain that $|f(\T-\vec y)-f(\T-\vec x)| < \epsilon $. This estimate proves that
$f$ is continuous on the orbit of $\T$. By minimality, it extends to a
continuous eigenfunction on all of $X_\rrr$.  This proves half of
Theorem \ref{eig-criterion}.

For the converse, suppose that $\sum_n \eta_n({\vec\alpha})$ diverges. Then
there exists a subsequence $\sum_k \eta_{n+3k}({\vec\alpha})$ that also
diverges.   We have the following lemma, that states that any finite sum of 
return vectors separated by three levels appears in $X_\rrr$ as the return of
some $n$-supertile to itself.

\begin{lem}\label{lem3} For a given $n$, pick $N$ such that $N+1-n$ is divisible by 3.
For $k = n, n+3, \ldots, 
N-2$ pick $\vec v_k \in \vvv^k$ and let 
$\vec v = \vec v_n + \vec v_{n+3} + \cdots + \vec v_{N-2}$. 
For every such set of choices, there exists 
an  $N$-supertile containing two $n$-supertiles of the same type, 
such that the relative position of the two $n$-supertiles is $\vec v$. 
\end{lem}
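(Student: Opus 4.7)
The plan is to induct on the number of return vectors in the sum, maintaining at stage $j$ a distinguished $(n+3j+2)$-supertile $S_j$ containing two sub-$n$-supertiles of a common type $i_0$ at relative position $\vec w_j := \vec v_n + \vec v_{n+3} + \cdots + \vec v_{n+3j}$. The base case $j=0$ is immediate from the definition of $\vvv^n$, which hands us an $(n+2)$-supertile $S_0$ containing two $n$-supertiles of some common type $i_0$ at relative position $\vec v_n$; this choice of $i_0$ is then carried through the whole induction.

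For the inductive step, $\vec v_{n+3(j+1)} \in \vvv^{n+3(j+1)}$ is realized, by definition, within some $(n+3(j+1)+2)$-supertile $T$ as the relative position of two $(n+3(j+1))$-supertiles of the same type; under the standing assumption $G=\R^d$ of this section, same type means literal translate, so these two supertiles are related by a translation by exactly $\vec v_{n+3(j+1)}$. Strong primitivity then supplies at least one copy of $S_j$ inside the first of these two $(n+3(j+1))$-supertiles, and the translate of that copy by $\vec v_{n+3(j+1)}$ is automatically a copy of $S_j$ sitting inside the second. Picking the ``first'' type-$i_0$ $n$-supertile in the first copy of $S_j$ and the ``second'' in the second copy yields two copies of $P_n(i_0)$ inside $T$ at relative position $\vec w_j + \vec v_{n+3(j+1)} = \vec w_{j+1}$. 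Setting $S_{j+1}:=T$ closes the step, and the divisibility hypothesis $3\mid N+1-n$ ensures that the iteration terminates with $S_j$ being an actual $N$-supertile.

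The main thing to be careful about in the writeup is why the gap of $3$ between successive indices is sharp: two of the three levels are consumed in unpacking a return vector at level $k$ (which lives inside a $(k+2)$-supertile), and the third is needed so that strong primitivity can embed the current $(k+2)$-supertile $S_j$ inside the $(k+3)$-supertiles between which the next return vector $\vec v_{k+3}$ operates. Apart from this bookkeeping, the proof uses only strong primitivity and the fact that under $G=\R^d$ same-type supertiles are translates; neither finite local complexity nor the van Hove property is invoked.
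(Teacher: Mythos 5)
Your proof is correct and is essentially the paper's own argument: an induction in which each step realizes the new top-level return vector as the displacement between two same-type supertiles inside a supertile two levels up, uses strong primitivity to embed the previously constructed supertile into each of them in corresponding positions, and adds the displacements. The only differences are cosmetic (indexing by the number of summands rather than by $N$, and making explicit that the two embedded copies sit in corresponding positions because same-type supertiles are translates of one another).
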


\begin{proof}
  Again we work by induction on $N$. If $N=2$, then this follows from
  the definition of $\vvv^n$. Now suppose it is true for $N=N_0$, and
  we shall attempt to prove it for $N=N_0+3$. By the inductive
  hypothesis, there exist points $\vec x_0$ and $\vec y_0$ in corresponding
  $n$-supertiles within the same $N_0$-supertile $S_1$ such that
  $\vec y_0-\vec x_0 = \vec v = \vec v_n + \vec v_{n+3} + \cdots + \vec
  v_{N_0-2}$, and there exist two $(N_0+1)$-supertiles $S_2$ and
  $S_3$, of the same type and with relative position $\vec v_{N_0+1}$,
  within an $(N_0+3)$ supertile.  (See Figure \ref{evalu-lemma-2}).
\begin{figure}[ht]
\includegraphics[width=5in]{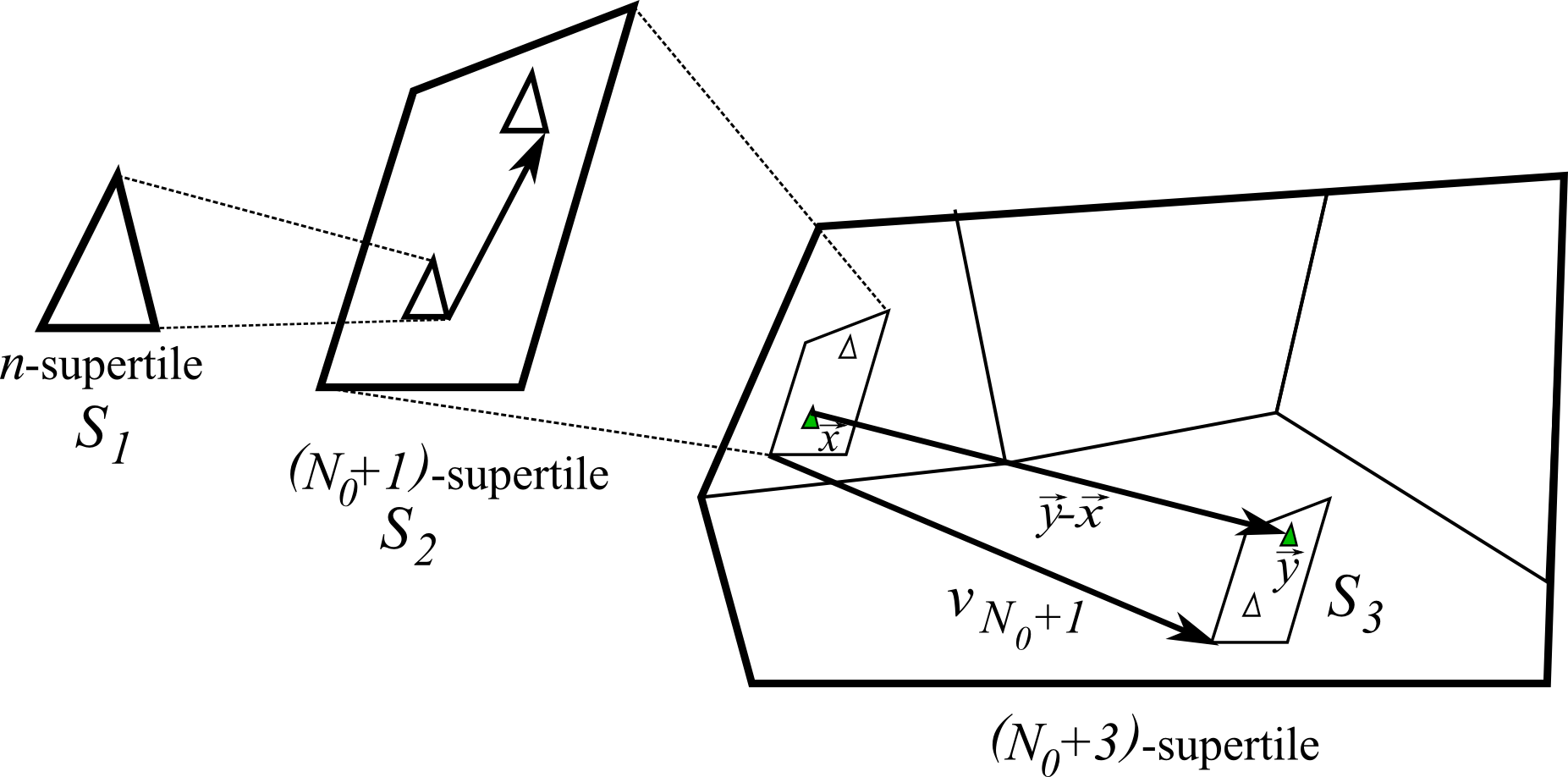}
\caption{The induction step gives a return vector $\vec y-\vec x$ between two
  copies of $S_1$, shown shaded inside an $(N_0 + 3)$-supertile.}
\label{evalu-lemma-2}
\end{figure}
By primitivity, $S_2$ and $S_3$ each contain
copies of $S_1$ (in corresponding positions). 
Let $\vec x$ be the point corresponding
to $\vec x_0$ in the copy of $S_1$ inside $S_2$, and let $\vec y$ be the point 
corresponding to $\vec y_0$ in the copy of $S_1$ inside $S_3$. Then
$\vec y-\vec x = \vec v$.  
\end{proof}

Thus for any $\epsilon$ and for infinitely many values of $n$, one can
find vectors $\vec v_n, \vec v_{n+3}, \cdots, \vec v_{N-2}$ with $\vec
v_k \in \vvv^k$, such that $|\exp(2 \pi i {\vec\alpha} \cdot \sum \vec v_k)
- 1| > 2 \epsilon$.  By restricting to a subsequence we can assume that
the complex numbers $\exp(2 \pi i {\vec\alpha} \cdot \vec v_k)$ are either
all in the first quadrant or all in the fourth quadrant, and that 
$|\exp(2 \pi i {\vec\alpha} \cdot \sum \vec v_k)
- 1| > \epsilon$.  
By Lemma \ref{lem3} there then exist, for $n$
arbitrarily large, two $n$-supertiles of the same type with relative
position $\sum \vec v_k$.  Pick $\vec x$ and $\vec y$ to be corresponding points
of these supertiles in $\T$, such that a big ball around $\vec x$ and $\vec y$
lies entirely in the supertile. (This is possible since the supertiles
form a van Hove sequence.) Then $f(\T-\vec x)$ and $f(\T-\vec y)$ differ
in phase by at least $\epsilon$, 
so our purported eigenfunction is not continuous.
\end{proof}

\subsection{Measurable eigenvalues}

In this section 
we provide an example that has pure discrete spectrum from a measurable
standpoint while being weakly mixing from a topological one.

\begin{ex}{\em The scrambled Fibonacci tiling.}\label{scrambled.fib.1d}
\label{scrambledFib}
We consider four fusions, denoted by the letters $\fff$, 
$\aaa$,  $\eee$ and $\sss$,
with the last being the ``scrambled Fibonacci" fusion whose tiling space has the desired properties.
All use the prototile set $\{a,b\}$ where the length of $a$ is
the golden mean $\phi$ and the length of $b$ is $1$.  

The first fusion
rule is the usual Fibonacci rule $\fff$, which is prototile- and
transition-regular with $(n+1)$-supertiles given by $F_{n+1}(a) =
F_n(a) F_n(b), F_{n+1}(b) =F_n(a)$.  This fusion rule generates the
self-similar Fibonacci tiling space $X_\fff$, which is known to have
measurable and topological
pure point spectrum with eigenvalue set $\frac{1}{\sqrt{5}}\Z[\phi]$.  
Importantly for
our calculations, the Euclidean length of $F_{n-1}(a)$ and $F_{n}(b)$ is
$\phi^{n}$, which deviates from an integer by 
$\pm\phi^{-n}$, and differs from an integer multiple of $1/\phi$ 
by $\pm\phi^{-(n+2)}$.
The transition matrix is $M_0 = \left ( \begin{smallmatrix}
      1&1\\1&0
\end{smallmatrix} \right )$.
%
%
To make the second fusion rule, called ``accelerated''
Fibonacci, we first fix some increasing sequence of positive integers
$\{N(n)\}_{n =1}^\infty$, where we assume
$N(n) - N(n-1) > 2$.  We define $\aaa$ to be the induced fusion rule
on $N(n)$ levels so that $A_n(a) = F_{N(n)}(a)$ and $A_n(b) =
F_{N(n)}(b)$.  The lengths of the $a$ and $b$ $n$-supertiles for
$\aaa$ are $\phi^{N(n)+1}$ and $\phi^{N(n)}$, respectively.  This
fusion rule is prototile-regular but not necessarily transition-regular, since now
the transition matrices $M_n$ are given by $M_0^{N(n) - N(n-1)}$.

The third fusion, which we call ``exceptional'', is
derived from the accelerated rule by introducing a third supertile
type on all odd levels and using it to introduce a relatively small
defect in the next (even) level.  On both the odd and even levels of
$\eee$, the supertiles $E_n(a)$ and $E_n(b)$ are constructed with the
same populations of prototiles and $(n-1)$ supertiles as $A_n(a)$
and $A_n(b)$.  When $n$ is odd, and only when $n$ is odd, there is an
additional tile $E_{n}(e)$ with the same population of $(n-1)$-supertiles
as $E_n(b)$. The
fusion rules for $\eee_n$ are as follows:

When $n$ is odd, $E_n(a)$ and $E_n(b)$ are built from $\eee_{n-1}$ in
exactly the same way that the $\aaa_n$-supertiles are built from
$\aaa_{n-1}$.  The exceptional supertile $E_n(e)$ is obtained from
$E_n(b)$ by permuting the $(n-1)$-supertiles so that all of the
$E_{n-1}(a)$-supertiles come before any of the $E_{n-1}(b)$-supertiles.  
When $n$ is
even, $E_n(a)$ and $E_n(b)$ are built from $\eee_{n-1}$ in exactly the
same way that the $\aaa_n$-supertiles are built from $\aaa_{n-1}$,
except that one copy of $E_{n-1}(b)$ is replaced with $E_{n-1}(e)$.

We can make the fusion rule prototile-regular by taking the induced
fusion of $\eee$ on even levels.  We call this last fusion rule the
{\em scrambled Fibonacci} $\sss$, but most of our proofs center on the
equivalent space $X_{\eee}$.
By controlling the sequence $N(n)$ we can change spectral properties
of the scrambled Fibonacci fusion.  

The space $X_\fff$ is well known to be recognizable, and the recognizability
of $X_\aaa$ is similar. The same patterns that allow us to recognize supertiles
in $X_\fff$ also work (with small modifications) in $X_\eee$ and $X_\sss$. We 
can thus freely speak of the (unique) $n$-supertile containing a 
particular tile. 

\begin{prop}\label{fib.even} 
If $N(2n) - N(2n-1)$ goes to infinity fast enough that 
$\sum_n \phi^{N(2n-1)-N(2n)}$ converges, 
then all four fusion spaces have pure point measurable spectrum with
eigenvalues $\frac{1}{\sqrt{5}}(\Z + \phi \Z)=\frac{1}{\sqrt{5}}\Z[\phi]$.
\end{prop}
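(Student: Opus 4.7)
The proof plan is to reduce to the classical Fibonacci spectrum via a measurable conjugacy. First, I observe that $X_\aaa$ coincides with $X_\fff$ (since $\aaa$ is just an induced fusion of $\fff$ on a subsequence of levels), and similarly $X_\sss$ coincides with $X_\eee$. The self-similar Fibonacci tiling $X_\fff$ is classically known to have pure point measurable and topological spectrum equal to $\frac{1}{\sqrt{5}}\Z[\phi]$. Hence it suffices to transfer this result to $X_\eee$.

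To do so, I construct a measurable factor map $\Phi: X_\eee \to X_\fff$ by ``unscrambling'' defects. By recognizability, every $\T \in X_\eee$ admits a canonical hierarchical decomposition into $n$-supertiles. By the construction of $\eee$, each $E_{2n}(\cdot)$-supertile contains a unique copy of the exceptional $E_{2n-1}(e)$ at a prescribed position; this exceptional supertile has the same $(2n-2)$-supertile population as $E_{2n-1}(b)=F_{N(2n-1)}(b)$ but with all $E_{2n-2}(a)$'s gathered before the $E_{2n-2}(b)$'s. I define $\Phi(\T)$ by permuting, for every $n\ge 1$, the $(2n-2)$-subsupertiles inside each such $E_{2n-1}(e)$ back to the Fibonacci ordering of $F_{N(2n-1)}(b)$. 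When well-defined, this yields a tiling in $X_\fff$.

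The central task is to show $\Phi$ is defined almost surely. The unscrambling at level $2n$ shifts any tile it affects by at most $\phi^{N(2n-1)+1}$, the length of $E_{2n-1}(e)$, and only if the tile lies inside that $E_{2n-1}(e)$. By translation invariance of $\mu_\eee$, the probability that the origin lies in such a defective sub-supertile equals the length ratio $\mathrm{length}(E_{2n-1}(e))/\mathrm{length}(E_{2n}(\cdot))\sim \phi^{N(2n-1)-N(2n)}$. The hypothesis $\sum_n \phi^{N(2n-1)-N(2n)}<\infty$, combined with the Borel--Cantelli lemma (no independence required), yields that for $\mu_\eee$-a.e.\ $\T$ the origin lies in a defective sub-supertile for only finitely many $n$, so the cumulative shift at the origin is finite. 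Translation invariance and Fubini then propagate this to every tile of $\mu_\eee$-a.e.\ $\T$ (any tile with infinite shift would produce a positive-measure set of points with infinite shift, contradicting the a.s.\ statement for Lebesgue-a.e.\ point), so $\Phi(\T)\in X_\fff$ is well-defined almost surely.

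Finally, $\Phi$ commutes with translations because the unscrambling depends only on the translation-covariant hierarchical structure. Recognizability of $\fff$ provides a canonical inverse: given $\T_F\in X_\fff$, identify its Fibonacci hierarchy and re-scramble at every even level according to the $\eee$ rule to obtain a unique preimage. Hence $\Phi$ is a measurable isomorphism of translation dynamical systems pushing $\mu_\eee$ onto $\mu_\fff$, and $(X_\eee,\mu_\eee)$ inherits pure point measurable spectrum $\frac{1}{\sqrt{5}}\Z[\phi]$ from $(X_\fff,\mu_\fff)$. The main obstacle is the Borel--Cantelli step, where one must carefully justify the probability computation and the almost-sure finiteness of the accumulated shift; the summability hypothesis is calibrated precisely so that this argument succeeds.
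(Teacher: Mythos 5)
Your proof is correct and follows essentially the same route as the paper: reduce to a measurable conjugacy between $X_\eee$ and the (accelerated) Fibonacci space, defined by unscrambling the exceptional supertiles, with the summability hypothesis guaranteeing that almost every point lies inside a defect at only finitely many levels. The paper packages the same estimate slightly differently --- it replaces the nested chain of \emph{unexceptional} supertiles containing the origin by their $\aaa$-counterparts and checks level-to-level consistency, rather than composing per-tile permutations via Borel--Cantelli and Fubini --- but the map and the key computation are the same.
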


\begin{proof} 
We will show that the four spaces $X_\fff$, $X_\aaa$, $X_\eee$ and $X_\sss$ are 
all measurably conjugate.  Then, since $X_\fff$ is well known to have
pure point spectrum with eigenvalue set $\frac{1}{\sqrt{5}}\Z[\phi]$, 
the others must as well.
Since $X_\fff$ and $X_\aaa$ are manifestly the same, and since $X_\eee=X_\sss$,
we need only show that $X_\aaa$ and $X_\eee$ are measurably conjugate. 

In the tilings in $X_{\eee}$, we call a supertile of any level {\em
  exceptional} if it lies in an $E_m(e)$-supertile of some level. Note
that $E_{2m+2}(a)$ and $E_{2m+2}(b)$ each contain only one
$E_{2m+1}(e)$ supertile and a large number (of order $\phi^{N(2m+2)-N(2m+1)}$)
of supertiles of type $E_{2m+1}(a)$ and $E_{2m+1}(b)$. The fraction of
$(2n)$-supertiles that are exceptional in $X_\eee^{2n}$ is thus
bounded by a constant times $\epsilon_n=\sum_{m=n}^\infty \phi^{-(N(2m+2)
  -N(2m+1))}$, which by assumption goes to zero as $n \to \infty$.  

Suppose $T$ is a tiling in $X_\eee$. If the origin lies in an
unexceptional supertile of some level $n$, and hence also at levels
$n+1$, $n+2$, etc., and if the union of these supertiles is the entire
line,\footnote{Note that this condition is translation-invariant, as
  every point in $T$ would then lie in a sequence of unexceptional
  supertiles whose union is the entire line.}  then we can convert
this to a tiling in $X_\aaa$ by replacing each unexceptional
$\eee$-supertile containing the origin with the corresponding
$\aaa$-supertile.  From the definitions of the supertiles, this
operation on $n+1$-supertiles is consistent with the operation on
$n$-supertiles.
 
The measure of the tilings for which the origin is in an exceptional
$n$-supertile is bounded by a constant times $\epsilon_n$, and so 
goes to zero as $n\to\infty$. Thus,
with probability 1, the origin lies in an unexceptional supertile of
some level. Likewise, with probability 1, the union of the supertiles
containing the origin is all of $\R$. Thus we have a map from $X_\eee$
to $X_\aaa$ that is defined except on a set of measure zero.  This map
is readily seen to preserve measure and to commute with translation.
\end{proof}

\begin{prop} \label{fib.odd}
If $\lim_{n \to \infty}N(2n+1) - 2N(2n) = +\infty$, then
  $X_{\sss}$ is topologically weakly mixing.
\end{prop}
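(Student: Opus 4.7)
The plan is to apply Theorem~\ref{eig-criterion} to $X_\sss$: we must show that $\sum_n \eta_n(\alpha)$ diverges for every nonzero $\alpha\in\R$, where $\vvv^n$ is computed in $\sss$. I would split into two cases depending on whether $\alpha$ lies in the Fibonacci eigenvalue set $\Lambda := \tfrac{1}{\sqrt 5}\Z[\phi]$.

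If $\alpha \notin \Lambda$, then by Proposition~\ref{fib.even} $\alpha$ is not a measurable eigenvalue of $X_\sss$; since every topological eigenvalue is also a measurable eigenvalue, $\alpha$ cannot be a topological eigenvalue.

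For $\alpha \in \Lambda \setminus \{0\}$, I would exploit the anomalous return vectors produced by the scrambling. By construction every $E_{2n+2}$-supertile contains exactly one copy of $E_{2n+1}(e)$, whose initial segment is a run of $M_n := F_{N(2n+1)-N(2n)}$ consecutive copies of $E_{2n}(a)$. Since $E_{2n}(a)$ has length $\phi^{N(2n)+1}$, the vectors $\vec v_{n,k} := k\phi^{N(2n)+1}$ for $k = 1,\ldots,M_n-1$ lie in $\vvv^n$ (an $(n+2)$-supertile of $\sss$ is a $(2n+4)$-supertile of $\eee$, hence contains the $(2n+2)$-supertile housing $E_{2n+1}(e)$). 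Writing $\alpha = (p+q\phi)/\sqrt 5$ with $(p,q) \in \Z^2 \setminus \{(0,0)\}$ and using Binet's identity $\phi^m/\sqrt 5 = F_m + (-1)^m\phi^{-m}/\sqrt 5$, a brief computation yields
\[
\|\alpha\phi^m\| \;=\; \frac{|p\phi-q|}{\sqrt 5}\,\phi^{-(m+1)}.
\]
Combined with $M_n \sim \phi^{N(2n+1)-N(2n)}/\sqrt 5$, the hypothesis $N(2n+1)-2N(2n)\to\infty$ gives
\[
M_n\,\|\alpha\phi^{N(2n)+1}\| \;\sim\; \frac{|p\phi-q|}{5}\,\phi^{\,N(2n+1)-2N(2n)-2} \;\longrightarrow\; \infty.
\]
Thus for each sufficiently large $n$ one can choose $k_n \leq M_n$ with $k_n\alpha\phi^{N(2n)+1}$ within $\|\alpha\phi^{N(2n)+1}\|$ of $\tfrac 12 \pmod 1$, forcing $|\exp(2\pi i \alpha \vec v_{n,k_n})-1| \geq 1$. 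Hence $\eta_n(\alpha) \geq 1$ for all large $n$, $\sum_n \eta_n(\alpha)$ diverges, and $\alpha$ is not a topological eigenvalue.

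The main obstacle is careful bookkeeping rather than a new idea: one must locate the $\vec v_{n,k}$ in the correct return-vector set under the $\sss$-to-$\eee$ level relabeling and extract the sharp two-sided asymptotic $\|\alpha\phi^m\| \asymp \phi^{-m}$ for $\alpha \in \Lambda\setminus\{0\}$ from the Pisot behavior of $\phi$. Once this is done, the hypothesis $N(2n+1)-2N(2n)\to\infty$ amplifies the small quantity $\|\alpha\phi^{N(2n)+1}\|$ by the much larger $M_n$, automatically forcing divergence of the sum.
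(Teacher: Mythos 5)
Your treatment of $\alpha\in\frac{1}{\sqrt 5}\Z[\phi]\setminus\{0\}$ is essentially the paper's argument: the paper likewise uses the long run of consecutive identical $E_{2n}$-supertiles inside $E_{2n+1}(e)$ (it uses the $b$-run where you use the $a$-run, which is immaterial) to produce return vectors $k|S_n|$ with $k$ ranging up to a Fibonacci number of order $\phi^{N(2n+1)-N(2n)}$, and then observes that $\|\alpha\,|S_n|\|\asymp\phi^{-N(2n)}$ times $\phi^{N(2n+1)-N(2n)}$ tends to infinity, so some multiple lands far from $1$ on the circle. Your quantitative bookkeeping via Binet's formula is correct, and packaging the conclusion as $\eta_n(\alpha)\ge 1$ infinitely often so that Theorem \ref{eig-criterion} applies is a legitimate (and slightly cleaner) way to finish that half.

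The genuine gap is in your first case. You dispose of $\alpha\notin\frac{1}{\sqrt 5}\Z[\phi]$ by citing Proposition \ref{fib.even}, but that proposition carries its own hypothesis --- that $N(2n)-N(2n-1)$ grows fast enough for $\sum_n\phi^{N(2n-1)-N(2n)}$ to converge --- which is \emph{not} assumed in Proposition \ref{fib.odd}. The two hypotheses are logically independent: for instance, $N(2n)=N(2n-1)+1$ with $N(2n+1)=3N(2n)$ satisfies the hypothesis of Proposition \ref{fib.odd} while violating that of Proposition \ref{fib.even}, and in that regime you have no measurable conjugacy with $X_\fff$ to fall back on, so the claim ``$\alpha$ is not a measurable eigenvalue'' is unsupported. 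The paper avoids this entirely by arguing topologically and directly: for $\alpha\notin\frac{1}{\sqrt 5}\Z[\phi]$, Pisot's theorem gives that $\exp(2\pi i\,\alpha\,\phi^m)$ does not tend to $1$, and since return vectors of length $\phi^m$ occur inside arbitrarily large supertiles (the unscrambled Fibonacci structure persists in $\sss$), the quantities $\eta_n(\alpha)$ do not tend to zero and $\alpha$ cannot be a topological eigenvalue. Replacing your appeal to Proposition \ref{fib.even} with this direct Pisot argument repairs the proof without changing anything else.
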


\begin{proof}First we show that elements of $\frac{1}{\sqrt{5}}\Z[\phi]$
cannot be topological eigenvalues. 
Then we show that real numbers that are not of this
form cannot be topological eigenvalues.

The supertiles $S_n(a)=E_{2n}(a)$ and $S_n(b)=E_{2n}(b)$ have length
$\phi^{N(2n)+1}$ and $\phi^{N(2n)}$, respectively.
For $\alpha \in \frac{1}{\sqrt{5}}\Z[\phi]$,
$|\exp(2 \pi i \alpha |S_n(a)|)-1|$ and
$|\exp(2 \pi i \alpha |S_n(b)|)-1|$ 
are bounded above and below
by constants (depending on $\alpha$) times $\phi^{-N(2n)}$.
Each supertile $E_{2n+1}(e)$ contains the Fibonacci number 
$f_{N(2n+1) - N(2n)}$ consecutive
copies of $E_{2n}(b)$, since that is how many
$N(2n)$-supertiles of type $b$ there are in the $N(2n+1)$-supertile
$F_{N(2n+1)}(b)$.  We thus find at least that many consecutive copies of
$E_{2n}(b)$ in $S_{n+2}(a)$ and $S_{n+2}(b)$, so there exist vectors
$v_k = k |S_n(b)| $ between $n$-supertiles of the same type in the
same $(n+2)$-supertile, where $k$ is any positive integer up to 
$f_{N(2n+1) - N(2n)}$. Since $|\exp(2 \pi i \alpha |S_n(b)|)-1|$ is 
bounded below by a constant times $\phi^{-N(2n)}$, and since 
$f_{N(2n+1)-N(2n)}$ is of order $\phi^{N(2n+1)-N(2n)}$, and  
since $\phi^{N(2n+1)-2N(2n)}$ grows without bound, 
there are $k$ for which for which
$\exp(2 \pi i \alpha v_k)$ is not close to 1. In fact, by taking $n$
sufficiently large and picking $k$ appropriately, 
we can get $\exp(2\pi i \alpha v_k)$  to be as close as we want to any
number on the unit circle.

On the other hand, if $\alpha$ is not in $\frac{1}{\sqrt{5}}\Z[\phi]$,
then by Pisot's theorem, $\exp(2 \pi i \phi^n \alpha)$ does not
approach 1 as $n \to \infty$. Since for arbitrarily large patches $P$
there exist return vectors of length $\phi^n$ for $n$ sufficiently
large, $\alpha$ cannot be a topological eigenvalue.
\end{proof}
\end{ex}

It is simple to construct sequences $N(n)$ that meet the conditions of 
both Propositions (\ref{fib.even}) and (\ref{fib.odd}). For instance, we
could take $N(n)=3^n$. Thus there exist fusion tilings that are 
topologically weakly mixing but are measurably pure point. 

\subsection{Pure point spectrum}
An important and widely studied problem in substitution
sequences and substitution tilings is determining when the
(measure-theoretic) tiling dynamics have pure point spectrum.  A key
tool is Dekking's coincidence criterion \cite{Dekking}, first developed for
1-dimensional substitutions of constant length and later extended to arbitrary
substitutions, with generalizations in higher dimensions such as Solomyak's overlap algorithm
\cite{Sol.self.similar}. 
In this section we explore the extent to which the
analog of Dekking's criterion determines spectral type for fusions.
The differences between substitutions and fusions are already apparent in the
simplest category, namely one dimensional fusions of constant length. 

We say a 1-dimensional fusion (or substitution) has 
{\em constant length} if, for each $n$,
all of the n-supertiles $P_n(i)$ have the same size. This implies
that tiles all have the same length
and that, for fixed $n$, 
each $n$-supertile contains the same number $L_n$ of $(n-1)$-supertiles.
The fusion is
{\em coincident} if, for each $n$, there exists an $N$ such that any
two $N$-supertiles agree on at least one $n$-supertile.  The fusion is
{\em coincident with finite waiting} if there exists a fixed integer
$k$ such that $N=n+k$ works for every $n$. For substitution tilings,
coincidence is equivalent to coincidence with finite waiting, but for
fusions it is not.  

To each fusion of constant length we associate a solenoid $S_\rrr$,
obtained as the inverse limit of the circle $\R/\Z$ under a series of
maps, with the $n$-th map being multiplication by $L_n$. $S_\rrr$ is a
topological factor of $X_\rrr$, with a point in $S_\rrr$
describing where the origin lies in a tile, a 1-supertile, a
2-supertile, etc, but not generally determining which type of
$n$-supertile the origin sits in.  There is a natural translational
action on $S_\rrr$, and the span of the eigenfunctions of this action
is dense in $L^2(S_\rrr)$. If the factor map from $X_\rrr$ to
$S_\rrr$ is a measurable conjugacy, or equivalently if there is 
a set of full measure on $X_\rrr$ where the factor map is 1:1, 
then $X_\rrr$ has pure point
spectrum. If the factor map is not a conjugacy, and if every
eigenfunction on $X_\rrr$ is obtained from an eigenfunction on
$S_\rrr$,\footnote{This is connected to the {\em height} of a
  substitution or fusion. If a substitution has height one, then all
  eigenvalues of $X_\rrr$ are eigenvalues of $S_\rrr$ 
\cite{Queffelec}.
One can similarly define a notion of height for
  fusions.}  then $X_\rrr$ does not have pure point spectrum.  
  
For substitutions of constant length, the situation is clear-cut: 

\begin{thm}[\cite{Dekking}]
A one dimensional tiling space obtained from a primitive and recognizable
  substitution of constant length and height one has pure point
  measurable spectrum if and only if it is coincident.  
\end{thm}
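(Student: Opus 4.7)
The plan is to factor the argument through the solenoid $S_\rrr$ and prove the chain of equivalences: coincidence $\Leftrightarrow$ the factor map $\pi: X_\rrr \to S_\rrr$ is a measurable conjugacy $\Leftrightarrow$ pure point measurable spectrum. Since $S_\rrr$ is a translation on a compact abelian group it has pure point spectrum, and under the height-one assumption every measurable eigenfunction of $(X_\rrr,\mu)$ is pulled back from $S_\rrr$. Together with the Halmos--von Neumann structure theorem applied to $X_\rrr$ and its maximal equicontinuous factor, this reduces everything to identifying coincidence with a.e.\ injectivity of $\pi$.

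For the direction coincidence $\Rightarrow$ $\pi$ a.e.\ $1$--$1$, I would show that for $\mu_{S_\rrr}$-a.e.\ $s \in S_\rrr$ the tile at the origin is determined by $s$. A point $s = (x_n) \in S_\rrr$ records, at each level $n$, the position $y_n = \ell_n x_n$ of the origin inside the $n$-supertile containing it, but not the \emph{type} of that supertile; a preimage under $\pi$ amounts to a consistent choice of supertile types at each level. Coincidence at level $n$, with witness $N$, produces a set $C_{n,N}$ of positions inside each $N$-supertile at which every $N$-supertile type agrees on which $n$-supertile (and hence which tile at position $y_n$) sits there. When the origin lands in $C_{n,N}$, the tile at the origin becomes $s$-measurable. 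Using primitivity and the height-one hypothesis to accumulate coincidences across many levels, I would argue that the measure of the ``bad'' set where no level has produced a coincidence decays to zero. By translation invariance this then determines every tile of a generic tiling from $s$, so $\pi$ is a.e.\ $1$--$1$.

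For the converse, if $X_\rrr$ has pure point spectrum then by Halmos--von Neumann it is measurably conjugate to the translation on the character group of its eigenvalue group. With height one, that group is exactly $S_\rrr$, so the measurable conjugacy must agree with $\pi$, forcing $\pi$ to be a.e.\ $1$--$1$. If coincidence failed at some level $n_0$, then for every $N$ there would exist two $N$-supertiles disagreeing on \emph{every} $n_0$-supertile; assembling tilings built around such pairs produces a positive-measure set of $s \in S_\rrr$ whose fiber contains distinct tilings disagreeing at the origin, contradicting a.e.\ injectivity of $\pi$.

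The main obstacle is the quantitative coincidence estimate in the first direction. A single coincidence position at level $n$ with witness $N$ covers only a fraction $\ell_n/\ell_N$ of the $N$-supertile, so to drive the non-coincidence set to measure zero one must combine coincidences at many successive levels and exploit primitivity to obtain uniform bounds. This is precisely where the stronger substitutive structure, in particular the equivalence of coincidence and coincidence with finite waiting that holds for substitutions but fails for general fusions, enters decisively, and it is the step whose failure in the fusion setting is the whole point of the authors' subsequent discussion.
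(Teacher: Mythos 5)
Your proposal follows essentially the same route as the paper's own (sketched) proof: reduce pure point spectrum to a.e.\ injectivity of the factor map onto the solenoid via the height-one hypothesis, use the fact that for substitutions coincidence implies coincidence with constant waiting $k$ to show that high-order supertiles agree on a fraction of sites tending to one (so the factor map is a.e.\ $1$:$1$), and in the converse direction exhibit a pair of supertiles disagreeing everywhere to defeat injectivity. The argument is correct; the one step you leave as a plan --- driving the bad set to measure zero --- is completed in the paper by the explicit geometric bound that any two $nk$-supertiles agree on at least a fraction $1-\left(\frac{L^k-1}{L^k}\right)^n$ of their length.
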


There are two reasons why this theorem does not apply to general fusions. 
First, a coincident fusion may not be uniquely ergodic. For each ergodic
measure, the question isn't whether a generic point in $S_\rrr$
corresponds to a single tiling, but whether it
corresponds to a single tiling {\em in a suitably chosen set of full measure}. 
Second, a coincidence, or even a coincidence with
finite waiting, only implies that supertiles agree somewhere. Unless
we have some control over the transition matrices, we cannot conclude
that high-order supertiles agree on a fraction approaching 1 of their
length, which is what is needed to obtain a measurable conjugacy between
$X_\rrr$ and $S_\rrr$.

In Example \ref{not-uniquely-ergodic-ex}, 
the fusion is not coincident,
as $P_n(a)$ and $P_n(b)$ disagree at every
site. The map from $X_\rrr$ to $S_\rrr$
is nowhere 1:1, being 4:1 over the orbit 
where there exist two infinite-order supertiles, and 
2:1 over all other orbits. 

However, for each ergodic measure, $X_\rrr$ {\em does} have pure
point spectrum. For instance, for the ergodic
measure that comes from the supertile sequence $\{a,a,\ldots\}$, the
measure of the tilings where the origin sits in a supertile $P_n(a)$
is exponentially close to 1.  With probability 1, for all sufficiently
large $n$ the
$n$-th order supertile containing the origin is of type $a$. 
Also with probability 1, the infinite-order
supertile containing the origin covers the entire line. 
The set of tilings with both these properties has full measure, and 
the factor map to $S_\rrr$ is 1:1 on this set.

\begin{ex} To see how coincidence with finite waiting is insufficient
to prove pure point spectrum we make a fusion based on the substitution   $\sigma(b)=b c^5 b^4$, $\sigma(c)=cb^5c^4$.
Repeated substitution produces words $\sigma^n(b)$ and $\sigma^n(c)$;
by abusing notation we write $\sigma^n(P_{n-1}(b))$ and $\sigma^n(P_{n-1}(c))$ to mean
the fusion of $(n-1)$-supertiles of types $b$ and $c$ in the order given by the letters of
 $\sigma^n(b)$ and $\sigma^n(c)$.   We introduce coincidence with finite waiting by
 defining the fusion rule to be
$$P_n(b)= P_{n-1}(b)
  \sigma^n(P_{n-1}(b)) P_{n-1}(c), \qquad P_n(c)= P_{n-1}(b)
  \sigma^n(P_{n-1}(c)) P_{n-1}(c).$$

The transition matrix
  $M_{n-1,n}=\left ( \begin{smallmatrix} 5&5 \cr 5&5
\end{smallmatrix} \right )^n+\left ( \begin{smallmatrix} 1&1 \cr 1&1
\end{smallmatrix} \right )=(5 \times 10^{n-1}+1)\left ( \begin{smallmatrix} 1&1 \cr 1&1
\end{smallmatrix} \right )$ has rank 1, so the system is uniquely
ergodic.  The length of an $n$-supertile is $\prod_{j=1}^n (10^j +2)$, and $P_n(b)$
and $P_n(c)$ differ on $10^j$ $(n-1)$-supertiles, implying that they differ on $\prod_{j = 1}^n 10^j$ tiles.  Thus $P_n(b)$ and
$P_n(c)$ agree on a positive fraction of their tiles, namely $1 -
\prod_{j=1}^n \frac{10^j}{10^j+2}$.  As $n\to \infty$, this fraction
increases but does not approach 1. 
This implies that one can find disjoint measurable sets of positive measure that map to the same set on the solenoid. Any function
that distinguishes between these sets cannot be approximated by a function on the solenoid, so the span of the eigenfunctions is
not dense and the spectrum is not pure point.
For an example of such a function, let $f(\T)$ equal 1 if the origin is in a $b$ tile and 0 if the origin is in a $c$ tile. 
\end{ex}

\begin{ex}
It is also possible for different ergodic
measures for the same fusion to have different spectral types.
Consider the 1-dimensional non-primitive substitution 
\begin{equation*}
\sigma(a)=a^{10}, \qquad \sigma(b)=bc^5b^4, \qquad \sigma(c)=cb^5c^4
\end{equation*}
of constant length 10. Next consider a 1-dimensional fusion tilings
with three prototiles $a, b, c$, each of unit length.  Using the same abuse of
notation as in the previous example we define the fusion rule
as
\begin{eqnarray*}
P_n(a) &=& P_{n-1}(a) \sigma^n(P_{n-1}(a)) P_{n-1}(b) P_{n-1}(c) \\
P_n(b) &=& P_{n-1}(a) \sigma^n(P_{n-1}(b)) P_{n-1}(b) P_{n-1}(c), \\
P_n(c) &=& P_{n-1}(a) \sigma^n(P_{n-1}(c)) P_{n-1}(b) P_{n-1}(c), 
\end{eqnarray*}
This fusion is coincident with waiting time 1, in that all
$n$-supertiles begin with $P_{n-1}(a)$ and end with $P_{n-1}(b)
P_{n-1}(c)$.  However, that is only 3 out of $L_n=10^n+3$ $(n-1)$-supertiles,
and the $n$-supertiles disagree on the rest! For
large $n$, the supertiles $P_n(a)$, $P_n(b)$ and $P_n(c)$ disagree at
roughly 70\% of their tiles, 97\% of their 1-supertiles, 99.7\% of their
2-supertiles, etc.

The transition matrices
$$ M_{n-1,n} = \begin{pmatrix} 10^n+1 &1&1 \cr 1& 5\cdot 10^{n-1}+1 & 5 
\cdot 10^{n-1}+1 \cr  1& 5\cdot 10^{n-1}+1 & 5 \cdot 10^{n-1}+1 
\end{pmatrix}$$ 
have rank
2. There are two ergodic measures, one coming from the supertile
sequence $\{a,a,\ldots\}$ and the other coming from an arbitrary
sequence of $b$'s and $c$'s.

When we
take the ergodic measure from the sequence $\{a,a,\ldots\}$,
$X_\rrr$ is measurably conjugate to the solenoid $S_\rrr$ 
and has pure
point spectrum. When we take the other ergodic measure, however, there
is a set of full measure where, for all sufficiently large $n$, the
origin is either in $P_n(b)$ or $P_n(c)$, but is not in the two
right-most $n-1$ supertiles within $P_n(b \hbox{ or }c)$.  This set
admits a measure-preserving involution where, for all sufficiently
large $n$, the supertiles $P_n(b)$ containing the origin are replaced
by $P_n(c)$ and vice-versa.  On any set of full measure, the factor map is 
(almost everywhere) at least 2:1, and the tiling dynamical
system does not have pure point spectrum.

In other words, almost every point of the solenoid corresponds to three
tilings. One set of preimages has full measure with respect to the $\{a,a,\ldots\}$
ergodic measure, while the other two preimage sets have full measure with respect 
to the other ergodic measure. Since the first ergodic measure only ``sees''
one preimage, it has pure point spectrum. Since the other measure
``sees'' two preimages, it does not have pure point spectrum.
\end{ex}

To get pure point spectrum from coincidence, we must control the
transition matrices.
\begin{thm}\label{constant-length}
Let $\rrr$ be a primitive, 
recognizable, prototile-regular, 1-dimensional fusion of
constant length. 
If the fusion is coincident with finite waiting, and if 
the transition matrices $M_{n-1,n}$ are uniformly bounded, then
$X_\rrr$ is uniquely ergodic and has pure point spectrum.
\end{thm}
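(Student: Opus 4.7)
The plan is that a single decay estimate, on the maximum disagreement between two $n$-supertiles, will simultaneously deliver unique ergodicity (by collapsing $\poly_\infty$ to a point) and pure point spectrum (by making the factor map to an associated solenoid a measure-theoretic isomorphism). Throughout, let $\ell_n=\prod_{j\le n}L_j$ be the common length of an $n$-supertile and set
\[
\bar\delta_n \;:=\; \max_{a,b}\frac{1}{\ell_n}\bigl|\{p \in [0,\ell_n) : P_n(a)\text{ and }P_n(b)\text{ disagree at }p\}\bigr|.
\]
Because the column sums of $M_{n-1,n}$ all equal $L_n$ and the entries are bounded by some $K$, we have $L_n \le L := JK$ uniformly in $n$. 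Decomposing two $(n+k)$-supertiles into their at most $L^k$ constituent $n$-positions, coincidence with waiting $k$ forces at least one position where the $n$-supertile types agree (contributing no disagreement), while each remaining position contributes at most $\bar\delta_n$; hence
\[
\bar\delta_{n+k} \;\le\; \bigl(1-L^{-k}\bigr)\,\bar\delta_n.
\]
The $k=1$ version of the same inequality gives the monotonicity $\bar\delta_{n+1}\le\bar\delta_n$, and iteration yields $\bar\delta_n\to 0$.

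For unique ergodicity, observe that if two $n$-supertiles agree on a fraction $\ge 1-\bar\delta_n$ of their tiles then their prototile-content vectors differ by at most $2\bar\delta_n$ in total variation, so the columns of $D_{0,n}$ are pairwise within $2\bar\delta_n$ and $\mathrm{diam}(\poly_{0,n})\to 0$. Running the same argument at any level $m$ collapses every $\poly_m$, and therefore $\poly_\infty$, to a single point; Corollary~\ref{poly.parameterizes} then gives unique ergodicity.

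For pure point spectrum, let $S_\rrr$ be the inverse limit of the circles $\R/\ell_n\Z$ under the natural projection maps, and let $\pi : X_\rrr \to S_\rrr$ send a tiling $\T$ to the sequence of positions of the origin in its nested $n$-supertiles. Then $S_\rrr$ is a compact abelian group, $\pi_*\mu$ is its Haar measure (by equivariance and uniqueness), and translation on $L^2(S_\rrr)$ has pure point spectrum spanned by the characters, so it suffices to show $\pi$ is injective on a set of full $\mu$-measure. Call $p\in[0,\ell_n)$ a \emph{coincidence position at level $n$} if every $P_n(a)$ has the same tile at $p$; the fraction of such positions is $\ge 1-\bar\delta_n$, and any coincidence position at level $n$ is automatically a coincidence position at level $n+1$, because the tile at that position inside any $(n+1)$-supertile comes from a fixed slot and does not depend on which $n$-supertile fills it. Let $A_n=\{\T:\text{the origin is at a coincidence position in its }n\text{-supertile}\}$; then $A_n\subseteq A_{n+1}$ and $\mu(A_n)\ge 1-\bar\delta_n$, so $\mu(\bigcup_n A_n)=1$. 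For $\T\in\bigcup_n A_n$ the tile at the origin of $\T$ is determined by $\pi(\T)$ alone, and applying this after each rational translation, for $\mu$-a.e.\ $\T$ every tile of $\T$ is determined by $\pi(\T)$ and hence $\T$ itself is. Thus $\pi$ is essentially injective, hence a measure-theoretic isomorphism, and $X_\rrr$ inherits pure point spectrum from $S_\rrr$.

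The main obstacle is the decay estimate $\bar\delta_n\to 0$, in which both hypotheses enter essentially: without the uniform bound on $L_n$ the shrinking factor $1-L^{-k}$ approaches $1$ and no geometric decay survives, while without coincidence with finite waiting the fraction of agreeing $n$-positions in a large supertile can vanish and no shrinkage is forced at all.
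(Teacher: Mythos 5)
Your proposal is correct and follows essentially the same route as the paper: the hypotheses of coincidence with finite waiting and uniformly bounded transition matrices yield the geometric decay of the maximal disagreement fraction between $n$-supertiles, which gives unique ergodicity by collapsing the frequency polytopes and gives pure point spectrum by showing the factor map to the solenoid $S_\rrr$ is injective off a null set. (The only quibble is cosmetic: the fraction of positions at which \emph{all} $n$-supertiles agree is bounded below by $1-(J-1)\bar\delta_n$ rather than $1-\bar\delta_n$, by a union bound over pairs, which changes nothing.)
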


\begin{proof}Suppose that there are $J$ species of prototiles, that the
fusion is coincident with waiting $k$, and that $M_{n-1,n}(i,j) \le C$ for
all $n,i,j$. Then $L_n \le CJ$. Any two $nk$-supertiles agree on at least
one $(n-1)k$-supertile, at least one $(n-2)k$-supertile in each remaining
$(n-1)k$-supertile, at least one $(n-3)k$-supertile in each remaining
$(n-2)k$-supertile, etc. This means that any two $nk$-supertiles agree on
at least a fraction $1- \left (\frac{C^kJ^k-1}{C^kJ^k}\right )^n$ of their
tiles, a fraction that approaches 1 as $n \to \infty$. In 
particular, the density of 
tiles (and likewise, of $n$-supertiles for any fixed $n$) 
is asymptotically the same in all $N$-supertiles as $N \to \infty$,\
implying unique ergodicity. 

A point in $S_\rrr$ thus determines all but a set of density zero of
the tiles in the infinite-order supertile containing the origin. The
probability of there being an undetermined tile in any fixed bounded
region is thus zero.  Since the real line is a countable union of
bounded regions, and since the probability of having two
infinite-order supertiles in a single tiling is also zero, almost
every point in $S_\rrr$ corresponds to a tiling with {\em no}
undetermined tiles. Thus the factor map $X_\rrr \to S_\rrr$ is a
measurable conjugacy, and $X_\rrr$ has pure point spectrum.
\end{proof}

Theorem \ref{constant-length}, while modest in scope, is typical of
the theorems that can be proven about fusions that are not of constant
length, or that are not 1-dimensional. Given any concidence-based test
for pure point spectrum in the category of substitution tilings (e.g.,
the balanced pair algorithm or the overlap algorithm), one can
construct an analogous test for fusions. However, a positive result
from such a test will only demonstrate pure point spectrum if one can
also show that the coincidences happen frequently enough.  This
requires estimates both on how long one must wait for a coincidence,
and on how much the system has grown in the process.

\subsection{Entropy}
Standard results in symbolic and tiling substitution dynamics say that
such systems cannot have positive entropy.  The obstruction is that the
transitions from level to level do not contain much `new' information.
This continues to be the case for fusion tilings when one assumes that
both the number and shapes of supertiles remain fairly
well-controlled.  This section contains a simple example of a 
minimal and uniquely ergodic fusion rule with positive entropy
and a
sufficient condition for a fusion space to have zero
entropy.

Configurational entropy is based on counting configurations, and for this
we need $G$ to be discrete. We therefore assume that
$G=\Z^d$, so we are essentially dealing with subshifts.
Let $\#_n$ be the number of configurations that can appear
in a $d$-dimensional cube of side $n$ (this is the complexity
function).  The configurational entropy is
$$ \lim_{n \to \infty} \frac{\log \#_n}{n^d} $$
For subshifts, configurational entropy is known to be the same
as topological entropy.

Positive entropy implies that there is a lot of randomness in the
system, while unique ergodicity means that all patterns appear with
well-defined frequencies. These ideas might seem to be in conflict,
but Jewett \cite{Jewett} and Krieger \cite{Krieger}
showed that uniquely ergodic
dynamical systems can exhibit a very wide range of dynamical properties, and
in particular can have positive topological entropy. 
The following example is in the spirit of their construction. 

\begin{ex} {\em A strictly ergodic 
fusion rule with positive entropy.}
  We construct a fusion rule $\rrr$ with $\ppp_0=\{a, b\}$
  recursively.  Let $\ppp_1$ be all words of length 3 in which each
  letter appears at least 1 time but no more than 2 times; we have
  $j_1=6$ distinct 1-supertiles. Now let $\ppp_2$ be all fusions of
  $\frac{3j_1^2}{2}=54$ 1-supertiles in which each supertile appears between
  $j_1$ and $2j_1$ times.  The expected number of 1-supertiles in any
  fusion of $\frac{3j_1^2}{2}$ of them is $\frac{3 j_1}{2}$, 
so we are including the
  highest-probability fusions in our set $\ppp_2$.

  In general, let $j_n$ be the number of $n$-supertiles and let
  $\ppp_{n+1}$ be all fusions of $\frac{3 j_{n}^2}{2}$ $n$-supertiles in which
  each $n$-supertile appears between $j_n$ and $2j_n$ times. Since
  having more than $2j_n$ or fewer than $j_n$ occurrences in a span of
  size $\frac{3j_n^2}{2}$ is already highly improbable, restricting to these
  $(n+1)$-supertiles only reduces the number of configurations
  slightly, and the system so constructed has positive entropy.  The
  transition matrices are enormous and grow super-exponentially but
  always have all nonzero entries, making the system strongly
  primitive and hence minimal.  Moreover, the constant $\delta_n$ used
  in equation (\ref{checkable_condition}) (to measure how balanced the
  columns of the transition matrices are) is always $\frac{j_n}{2j_n} =1/2$,
  so the tiling space is uniquely ergodic.
\end{ex}

The fusion rule $\rrr$ is not recognizable, but we can build a recognizable
fusion rule  $\rrr'$ from $\rrr$ as in Example \ref{recognize-ex}.  Since the
  entropy of the factor $X_{\rrr}$ is bounded by the entropy of
  $X_{\rrr'}$, $X_{\rrr'}$ has positive entropy. It is easy to check that
  the addition of subscripts does not affect unique ergodicity.

This example involved the number $j_n$ 
of $n$-supertile types growing exponentially
with the size of the supertiles. If the growth is slower than exponential,
and if the shapes of the supertiles are not too distorted, then
the system will have zero entropy. 

\begin{prop}\label{zero.entropy}
Let $d_n$ be the diameter of the largest $n$-supertile, let $j_n$
be the number of $n$-supertile types and suppose that there exist
constants $\beta, K$ such that each cube of side $\beta d_n$ touches
at most $K$ $n$-supertiles. If $\lim_{n \to \infty} \frac{\log j_n}{d_n^d} = 0$,
then the configurational entropy of $X_\rrr$ is zero. 
\end{prop}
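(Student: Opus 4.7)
The plan is to bound the complexity $\#_s$ by counting the possible $m$-supertile clusters that can cover a $d$-dimensional cube $Q$ of side $s$, for a level $m=m(s)$ chosen to balance $d_m$ against $s$. I would take $m(s)$ to be the largest integer with $\beta d_m \le s$; the standing assumption that $X_\rrr$ be nonempty forces $d_m \to \infty$, so $m(s) \to \infty$ as $s \to \infty$. Covering $Q$ by $\lceil s/(\beta d_m)\rceil^d$ cubes of side $\beta d_m$, the hypothesis gives that $Q$ meets at most
\[
M \;\le\; K \lceil s/(\beta d_m)\rceil^d \;\le\; C_1(s/d_m)^d
\]
$m$-supertiles, where $C_1$ depends only on $\beta$, $K$, and $d$.

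The same hypothesis bounds the number of $m$-supertiles neighboring any given supertile: a cube of side $2d_m$, which contains any supertile plus all of its neighbors, is covered by $\lceil 2/\beta\rceil^d$ cubes of side $\beta d_m$, so contains at most $K_2 := K\lceil 2/\beta\rceil^d$ $m$-supertiles, a constant independent of $m$. Thus the adjacency graph of $m$-supertiles has vertex degree bounded by $K_2$, and the standard polyomino-type estimate for $K_2$-regular graphs yields at most $K_2^M$ connected cluster shapes of size $M$ containing a fixed reference cell (one enumerates them by canonical BFS from the reference, so that no spurious $M!$ factor appears).

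A patch in $Q$ is completely determined by the cluster of $m$-supertiles touching $Q$, which is specified by (i) the position of $Q$ within the reference supertile ($O(d_m^d)$ choices), (ii) the cluster shape ($\le K_2^M$ choices), and (iii) the type assignment to each of the $M$ cells of the cluster ($j_m^M$ choices). Hence $\#_s \le C_2 d_m^d \cdot (K_2 j_m)^M$. Taking logarithms and dividing by $s^d$, using $M/s^d \le C_1/d_m^d$ and $s \ge \beta d_m$, one obtains
\[
\frac{\log \#_s}{s^d} \;\le\; \frac{O(\log d_m)}{d_m^d} \;+\; \frac{C_1(\log K_2 + \log j_m)}{d_m^d},
\]
which tends to $0$ by the hypothesis $\log j_m/d_m^d \to 0$, together with the trivial fact $\log d_m/d_m^d \to 0$ and $\log K_2$ constant.

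The main subtle point is the polyomino-style bound on cluster shapes: it depends crucially on the uniform bound $K_2$ on the number of neighboring supertiles. Without the geometric hypothesis, the arrangement factor could grow super-exponentially in $M$ and overwhelm the assumption on $j_m$; the hypothesis is precisely what keeps the combinatorics at the supertile level as tame as that of ordinary FLC tilings with tiles of bounded shape.
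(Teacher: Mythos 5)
Your proposal is correct in outline but takes a genuinely different, and more self-contained, route than the paper's. The paper's proof is a three-line count: it examines only cubes of side exactly $\beta d_n$, observes that such a cube meets at most $K$ $n$-supertiles, and bounds the number of configurations by $j_n^K\cdot V^K$ with $V\le d_n^d$; this controls $\log \#_s/s^d$ only along the subsequence $s=\beta d_n$, and so implicitly relies on the existence of the configurational-entropy limit (subadditivity for $\Z^d$ subshifts) to conclude. You instead bound $\log \#_s/s^d$ for \emph{every} $s$ by interpolating a level $m(s)$ and covering the cube by $\lceil s/(\beta d_m)\rceil^d$ small cubes. This buys the full $\limsup$ without invoking subadditivity, at the cost of having to count clusters of $M\approx C_1(s/d_m)^d$ supertiles rather than a bounded number of them, which is where the new work lies.

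The one step that needs repair is the claim that the patch in $Q$ is determined by the position of $Q$ in the reference supertile, the abstract cluster shape, and the type assignment. The adjacency graph of the cluster does not determine its geometric realization: two adjacent $m$-supertiles of given types can meet in many distinct relative positions, and the number of such two-supertile adjacency patterns is \emph{not} uniformly bounded in $m$ (this is precisely the phenomenon exhibited by the non-Pisot DPV of Example \ref{nonpisot}). So the polyomino factor $K_2^M$, together with $j_m^M$, undercounts the patches. The fix is cheap: encode the cluster by a degree-bounded spanning-tree traversal in which each step records not only which neighbor is taken but also the new supertile's type and its relative position with respect to its parent; the relative position takes at most $O(d_m^d)$ values, so the count becomes at most $O(d_m^d)\cdot\bigl(CK_2\,j_m\,d_m^d\bigr)^{2M}$, and the extra factor contributes $O(M\log d_m)/s^d\le O(\log d_m)/d_m^d\to 0$. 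With that amendment your argument is complete. (Two cosmetic points: a supertile together with all of its neighbors sits in a cube of side $3d_m$, not $2d_m$, and the standard connected-subgraph bound is $(eK_2)^M$ or $(K_2+1)^{2M}$ rather than $K_2^M$; neither affects the conclusion.)
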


\begin{proof} To determine the configuration in a cube of side
$\beta d_n$, one must specify the kinds of $n$-supertiles that intersect that
cube, and also specify the locations of those supertiles. There are at 
most $j_n^K$ choices for the first, and at most $V^K$ choices for the second,
where $V$ is the volume of the largest $n$-supertile, which is bounded by
$d_n^d$. Thus the log of the
number of configurations, divided by the volume of the cube, is bounded by
$\frac{K \log(j_n) + K d \log d_n}{\beta^d d_n^d}$, which goes to zero as
$n \to \infty$.
\end{proof}

The upshot of Proposition \ref{zero.entropy} is that positive entropy 
either requires the number $j_n$ of $n$-supertiles to grow 
exponentially with volume, 
or for the shapes and relative sizes of supertiles to be so wild, and for 
the ways that supertiles fit together be so varied, that there are many ways
for supertiles to fit together.  The geometric issues do not apply to 
dimension 1, where supertiles simply concatenate, but could in principle apply
in dimensions 2 or more.  However, we know of no examples where positive 
entropy is achieved without $j_n$ growing exponentially fast.

\subsection{Strong mixing}
A (measurable) dynamical system is {\em strongly mixing} if for any pair
of  measurable sets  $A,B$, and for any sequence of vectors $\vec v_n$ 
tending to infinity, 
$\lim \mu(A \cap (B-\vec v_n))= \mu(A)\mu(B)$.  The dynamical systems of
primitive substitution sequences
and self-similar tilings are never strongly mixing \cite{Dekking-Keane,
Sol.self.similar}. 
Because of the rigidity of the substitution process, knowing the location of one
copy of a patch gives a higher probability that it will be seen
again at certain intervals.  However, there are ``staircase''
cut-and-stack transformations in one and several dimensions that have
been shown to be strongly mixing \cite{Adams, Adams-Silva}, thus it is
possible to have strongly mixing fusion tiling systems.  As in the
case of entropy, this is only possible when the system has increasing
complexity at higher levels of the hierarchy. 

In this section we establish sufficient criteria for fusion tilings
not to be strongly mixing. These  criteria involve both uniform
bounds on the number of supertiles and on the transition
matrices, and are not necessary criteria.
For instance, the accelerated Fibonacci fusion discussed in Example
\ref{scrambledFib} does not have bounded matrices, but is essentially
the same as ordinary Fibonacci and is not strongly mixing.

\begin{thm}
  The dynamical system  of a strongly primitive
  van Hove fusion rule with a constant number of supertiles at each
  level and bounded transition matrices, and with group $G=\R^d$, 
cannot be strongly mixing.
\end{thm}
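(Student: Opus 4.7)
The plan is to refute strong mixing by exhibiting a cylinder set $A$ and a sequence $\vecw_n \to \infty$ with $\liminf_n \mu(A \cap (A - \vecw_n))$ strictly greater than $\mu(A)^2$. By Corollary~\ref{primitive.ergodic} the hypotheses imply that $(X_\rrr, \R^d)$ is uniquely ergodic; write $J$ for the common value of $j_n$ and $K$ for a uniform upper bound on the entries of $M_{n-1,n}$.

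First I build the return vectors. Strong primitivity forces every $(n+1)$-supertile to contain one copy of each $n$-type, and each $(n+2)$-supertile to contain each $(n+1)$-type, so every $(n+2)$-supertile contains at least $J\ge 2$ copies of each $n$-supertile type (one per $(n+1)$-sub-supertile). For each $n$, let $j_n^{*}$ maximize $V_n(j):=Vol(P_n(j))$, and choose an $(n+2)$-type $i_n^{*}$ whose ``mass'' $\nu_{n+2}(i_n^{*}):=\rho_{n+2}(i_n^{*})V_{n+2}(i_n^{*})$ is at least $1/J$; such an $i_n^{*}$ exists by pigeonhole since these masses sum to $1$. Take $\vecw_n$ to be the translation between two copies of $P_n(j_n^{*})$ in the preferred decomposition of $P_{n+2}(i_n^{*})$. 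Because strong primitivity forces the inner radius of every $P_{n+1}(i)$ to be at least as large as that of every $P_n(j)$, the minimum inner radius at level $n$ inherits the growth to infinity from the standing assumption, and therefore $|\vecw_n| \ge 2 r_n \to \infty$.

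Next, fix a patch $Q$ (say a single tile) and a small open neighborhood $U$ of the origin, and set $A:=X_{Q,U}$, so that $\mu(A) = freq_\mu(Q)\,Vol(U)$. Every occurrence of $P_{n+2}(i_n^{*})$ in a tiling contributes at least $c_n(j_n^{*},Q):=\#(Q\text{ in }P_n(j_n^{*}))$ pairs of $Q$-occurrences separated by exactly $\vecw_n$, namely one pair per copy of $Q$ in the first $P_n(j_n^{*})$. Hence
\begin{equation*}
\mu(A \cap (A-\vecw_n)) \;\ge\; \rho_{n+2}(i_n^{*}) \, c_n(j_n^{*},Q) \, Vol(U).
\end{equation*}
The bounded-matrix hypothesis gives $V_{n+2}(i_n^{*}) = \sum_j M_{n,n+2}(j,i_n^{*}) V_n(j) \le J^2K^2 V_n(j_n^{*})$, and unique ergodicity applied to the finitely many van Hove sequences $\{P_n(j)\}_n$ yields $c_n(j,Q)/V_n(j) \to freq_\mu(Q)$ uniformly in $j$. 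Combining with $\nu_{n+2}(i_n^{*})\ge 1/J$,
\begin{equation*}
\rho_{n+2}(i_n^{*}) \, c_n(j_n^{*},Q) \;=\; \frac{\nu_{n+2}(i_n^{*})}{V_{n+2}(i_n^{*})}\,c_n(j_n^{*},Q) \;\ge\; \frac{freq_\mu(Q)}{J^3K^2} + o(1),
\end{equation*}
so $\liminf_n \mu(A\cap(A-\vecw_n)) \ge \mu(A)/(J^3K^2)$. Since $\mu(A)^2 = freq_\mu(Q)\,Vol(U)\cdot\mu(A)$, choosing $Vol(U)$ small enough that $freq_\mu(Q)\,Vol(U) < 1/(J^3K^2)$ makes $\mu(A)^2$ strictly less than this liminf, contradicting strong mixing.

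The main obstacle, and the reason both hypotheses are genuinely used, is keeping the ratio $V_n(j_n^{*})/V_{n+2}(i_n^{*})$ bounded below: picking $j_n^{*}$ to maximize $V_n$ makes this work, and the bound on the size of the $(n+2)$-supertile relative to its largest $n$-sub-supertile uses the boundedness of the transition matrices essentially. The accelerated and scrambled Fibonacci constructions of Example~\ref{scrambledFib} show that when the matrices are unbounded this quantitative control collapses, and very different spectral and mixing behavior becomes possible.
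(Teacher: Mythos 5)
Your argument is correct and follows essentially the same route as the paper's proof: a high-mass $(n+2)$-supertile, return vectors between two copies of a fixed $n$-supertile type supplied by strong primitivity, volume-ratio control from the bounded transition matrices together with the constant number $J$ of supertile types, and a cylinder set chosen small enough that the doubled-patch frequency beats $\mu(A)^2$. The only cosmetic differences are your choice of the volume-maximizing $n$-supertile (the paper instead uses the mass-maximizing type at every level and bounds the largest-to-smallest volume ratio) and your explicit inner-radius argument that $|\vecw_n|\to\infty$, which the paper leaves implicit.
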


\begin{proof}
Our proof is an adaptation of Solomyak's \cite{Sol.self.similar},
which in turn is an adaptation of Dekking and Keane's
\cite{Dekking-Keane}. By Corollary \ref{primitive.ergodic},
$X_\rrr$ is uniquely ergodic, so for any patch $P$, 
$freq_\mu(P)$ can be computed from the actual frequency of $P$ in
any increasing sequence of supertiles. 
We will find a patch $P$ and a sequence of vectors $\vec v_n$,
tending to infinity, such that the frequency of $P \cup (P+\vec v_n)$
is bounded away from zero.  Then, supposing that $freq_\mu(P)=\delta$ and
$freq_\mu(P \cup (P+\vec v_n)) > \epsilon$, we pick a set $U \subset \R^d$
whose volume is less than $\frac{\epsilon}{2\delta^2}$, 
and which is small enough that
$\mu(X_{P,U})=freq_\mu(P) Vol(U)$.  Let $A=B=X_{P,U}$. 
Since $X_{P \cup (P+\vec v_n), U} \subset A \cap (B-\vec v_n)$, we have 
\begin{equation}
\mu(A \cap (B-\vec v_n)) \ge \mu(X_{P \cup P+\vec v_n, U}) 
\ge \epsilon Vol(U) > 2 \delta^2 Vol(U)^2 = 2 \mu(A)\mu(B),
\end{equation}
so $\mu(A \cap B - \vec v_n)$ cannot approach $\mu(A)\mu(B)$ as $n \to \infty$.

To find the vectors $\vec v$, we suppose the number of supertiles at each 
level is the constant $J$.
Since $X_\rrr^n$ can be expressed as the union of $J$ cylinder sets
  defined by which $n$-supertile is at the origin, it must be that at
  least one of those cylinder sets has measure at least $1/J$.  For
  each $n$, choose $l_n \in \{1, 2, ... J\}$ corresponding to an
  $n$-supertile with this property, so that $Vol(P_n(l_n)) \rho_n(l_n)
  \ge 1/J$, where $\rho_n$ is the supertile frequency vector.
  Now choose $\vecv_n \in \vvv^n$ to be a return vector for $P_n(l_n)$
  inside $P_{n+2}(l_{n+2})$, as in Theorem \ref{eig-criterion}.
  Because of strong primitivity and the fact that our transition
  matrices are uniformly bounded, we can find a $\delta'> 0$ for which
  $\frac{Vol(P_n(l_n))}{Vol(P_{n+2}(l_{n+2}))} \ge \delta' \text{ for
    all } n.$ (Specifically, if each
$n$-supertile contains at most $K$ $(n-1)$-supertiles, then the ratio of
volume between the largest and smallest $n$-supertile is at most $K$, and 
$Vol(P_n(l_n))/Vol(P_{n+2}(l_{n+2})) \ge 1/K^3$.)

The patch $P$ is arbitrary.
By choosing $n$ large we can make $\frac{\#(P \text{ in }
    P_n(l_n))}{Vol(P_n(l_n))}$ arbitrarily close to $freq_\mu(P)$, and hence
greater than a fixed constant $freq_\mu(P)/2$ for all $n$. 
The reader can refer to Figure
  \ref{counting.P.again} to see that $\#(P \cup P+\vecv_n \text{ in } 
P_{n+2}(l_{n+2})) \ge \#(P \text{ in } P_n(l_n))$. 
Since the fraction of volume from the supertiles $P_n(l_n)$ is at least $1/J$,
this implies that the frequency
of $P \cup (P+\vec v_n)$ is at least $\frac{freq_\mu(P) Vol(P_n(l_n))}
{2 J Vol(P_{n+2}(l_{n+2}))}$,
hence at least $\frac{\delta' freq_\mu(P)}{2J}$.
\begin{figure}[h]
\includegraphics[width=4.5in]{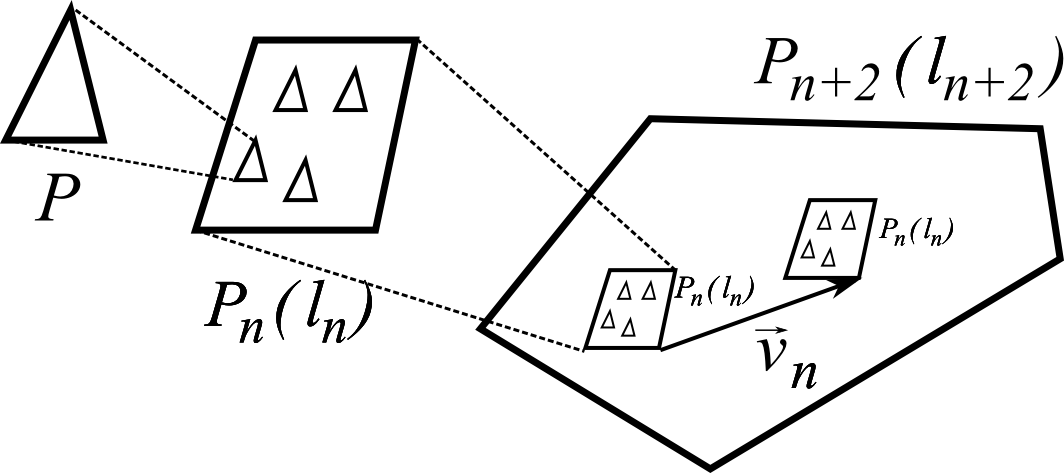}
\caption{Each copy of $P$ in $P_n(l_n)$ makes a copy of $P \cup (P +
  \vecv_n)$ in $P_{n+2}(l_{n+2})$.}
\label{counting.P.again}
\end{figure}
\end{proof}

\begin{prop}
  The dynamical system  of a strongly primitive
  van Hove fusion rule with a constant number of supertiles at each
  level and bounded transition matrices, and with group $G=\Z^d$, 
cannot be strongly mixing.
\end{prop}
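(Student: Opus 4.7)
The plan is to run essentially the same argument used in the preceding $\R^d$ theorem, with the one simplification that, in the $\Z^d$ setting, cylinder sets are defined without any ``smearing'' set $U$: if $X_P$ denotes the set of tilings containing a specified placement of the patch $P$ at the origin, then $\mu(X_P) = freq_\mu(P)$ directly, and $X_P \cap (X_P - \vec v) = X_{P \cup (P+\vec v)}$, so $\mu(X_P \cap (X_P - \vec v)) = freq_\mu(P \cup (P+\vec v))$. Thus disproving strong mixing reduces to exhibiting a patch $P$ and a sequence $\vec v_n \to \infty$ in $\Z^d$ for which $freq_\mu(P \cup (P+\vec v_n))$ does not tend to $freq_\mu(P)^2$.

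First I would invoke Corollary \ref{primitive.ergodic} to conclude that $(X_\rrr,\Z^d)$ is uniquely ergodic under our hypotheses, so abstract frequencies agree with actual frequencies measured in any increasing sequence of supertiles. Next, exactly as in the $\R^d$ proof, for each $n$ use pigeonhole to pick an $n$-supertile label $l_n\in\{1,\dots,J\}$ with $Vol(P_n(l_n))\rho_n(l_n)\ge 1/J$, and then pick a return vector $\vec v_n\in\vvv^n$ realizing two copies of $P_n(l_n)$ in a common $P_{n+2}(l_{n+2})$. Because the transition matrices are uniformly bounded, the ratios $Vol(P_n(l_n))/Vol(P_{n+2}(l_{n+2}))$ are bounded below by some fixed $\delta'>0$. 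Because the fusion rule is van Hove and the matrices bounded, the inradii of the $n$-supertiles tend to infinity, and since two distinct copies of $P_n(l_n)$ inside $P_{n+2}(l_{n+2})$ must be disjoint in their interiors, we can arrange $|\vec v_n|\to\infty$.

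Now fix any patch $P$ admitted by $\rrr$ with $freq_\mu(P)>0$. By unique ergodicity, for all sufficiently large $n$, $\#(P\text{ in }P_n(l_n))/Vol(P_n(l_n)) \ge freq_\mu(P)/2$. Exactly the counting argument illustrated in Figure \ref{counting.P.again} shows that each copy of $P$ inside one of the two distinguished copies of $P_n(l_n)$ inside $P_{n+2}(l_{n+2})$ produces a copy of $P\cup(P+\vec v_n)$ inside $P_{n+2}(l_{n+2})$. Combining these estimates with the fact that the $P_n(l_n)$-supertiles fill a fraction at least $1/J$ of large regions,
\[
freq_\mu\bigl(P\cup(P+\vec v_n)\bigr)\ \ge\ \frac{\delta'\,freq_\mu(P)}{2J}.
\]

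Finally, choose the patch $P$ to have small frequency; concretely, take $P$ to be a sufficiently high-order supertile, whose frequency is at most a constant times $1/Vol(P)$ by van Hove, so that $freq_\mu(P)<\delta'/(4J)$. Then
\[
freq_\mu\bigl(P\cup(P+\vec v_n)\bigr)\ \ge\ \frac{\delta'\,freq_\mu(P)}{2J}\ >\ 2\,freq_\mu(P)^2,
\]
so with $A=B=X_P$ we obtain $\mu(A\cap(B-\vec v_n)) > 2\mu(A)\mu(B)$ for all large $n$, contradicting strong mixing. The only slightly nonroutine point, compared with the $\R^d$ case, is verifying that $|\vec v_n|\to\infty$ using only van Hove plus bounded transition matrices (rather than relying on any continuous scaling); everything else is a transcription of the previous proof with the constant $Vol(U)$ replaced by $1$.
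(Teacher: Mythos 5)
Your proposal is correct and follows essentially the same route as the paper: rerun the $\R^d$ counting argument to get $freq_\mu(P\cup(P+\vec v_n))\ge \delta' freq_\mu(P)/2J$ for every patch, then choose $P$ with $freq_\mu(P)<\delta'/(4J)$ and take $U$ to be a single point so that $\mu(A\cap(A-\vec v_n))\ge 2\mu(A)^2$. The extra details you supply (why $|\vec v_n|\to\infty$, and realizing the low-frequency patch as a high-order supertile) are consistent with, and slightly more explicit than, what the paper writes.
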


\begin{proof}The previous proof does not apply to $\Z^d$ actions because we 
cannot choose $U$ arbitrarily small. However, we have already shown that
for {\em any} patch $P$ and any sufficiently large $n$, there exist large
$\vec v$ with
$freq_\mu(P \cup (P+\vec v)) \ge \delta'
freq_\mu(P)/2J$. We then find a patch $P$ whose frequency is less than 
$\delta'/4J$, 
so that
$freq_\mu((P \cup (P+\vec v_n)) \ge 2 freq_\mu(P)^2$. 
Taking $U$ to consist of one point, 
this implies that $\mu(A \cap (A-\vec v_n)) \ge 2 \mu (A)^2$, where $A=X_{P,U}$. 
\end{proof}

\section{Inverse limit structures, collaring, and cohomology}
\label{Topology.section}

In this section we consider topological properties of spaces $X_\rrr$ of
fusion tilings, including their structure as inverse limit spaces,
their \v Cech cohomology groups, and the significance of these groups. 

{\bf Standing assumptions for Section \ref{Topology.section}:} We assume that $G=\R^d$. Unlike in
Section \ref{Spectral.section}, this is more for convenience than from necessity.  Modifications for other
groups can be done exactly as for substitution tilings \cite{ORS,Tilings.book}. 
We also assume that our
fusion rules are recognizable and, as always, have finite local complexity. 

Tiling spaces can always be represented as inverse limits of CW
complexes  \cite{BBG,Sadun.inverse.limit}.
The challenge is finding a representation that allows for 
efficient calculation and for the proving of theorems.   To this end
we present generalizations of the Anderson-Putnam complex \cite{AP} and of the
partial collaring scheme of Barge, Diamond, and their 
collaborators \cite{BD, BDHS}.  (See also \cite{Gahler-Maloney} for another method of computing
the \v Cech cohomology of transition-regular 1-dimensional fusion tiling spaces that meet additional
assumptions.)

In all cases, we construct a sequence of spaces and maps
$$ \Gamma_0 \xleftarrow{f_0} \Gamma_1 \xleftarrow{f_1} \Gamma_2 \xleftarrow{f_2}
\Gamma_3 \xleftarrow{f_3} \cdots,
$$
where each {\em approximant} $\Gamma_i$ describes a region of the
tiling, each $\Gamma_{i+1}$ describes a larger region of the tiling,
and $f_i: \Gamma_{i+1} \to \Gamma_i$ is the {\em forgetful map} that
loses the additional information carried in $\Gamma_{i+1}$. The {\em
  inverse limit} $\ilim (\Gamma, f)$ is the set of infinite sequences
$(x_0, x_1, \ldots)$ such that each $x_i \in \Gamma_i$ and each
$x_i=f_i(x_{i+1})$. Such a sequence is a set of consistent
instructions for tiling larger and larger regions of $\R^d$. If the
union of these regions is all of $\R^d$ for all sequences in the
inverse limit, then there is a natural homeomorphism between
$\ilim(\Gamma, f)$ and $X_\rrr$.  The key is to make sure that every
tiling in $X_\rrr$ can be built up from the approximants in a unique
way.  A common obstruction is when the approximants can build an
infinite tiling that covers only a portion of $\R^d$.
``Border-forcing'' fusions, discussed next, do not have this
obstruction.  Later we will describe the technique of ``collaring'' to
make fusion rules become border-forcing.

\subsection{Forcing the border}
A fusion rule always tells us how $n$-supertiles make up the interiors
of larger $N$-supertiles.  But sometimes the $N$-supertiles also
determine which $n$-supertiles are on their exterior as well.  When
this happens we say the fusion rule forces the border, and we have a
natural way to see the space as an inverse limit.
\begin{dfn} A fusion rule {\em forces the border} if for each integer
$n$ there exists an $N$ with the following property: If $S_1$ and $S_2$ 
are two $N$-supertiles of the same type appearing in tilings $T_1$ and $T_2$
in $X_\rrr$, then the patch of $n$-supertiles
that touch $S_1$ in $T_1$ is equivalent to the patch of $n$-supertiles touching
$S_2$ in $T_2$.
\end{dfn}

\begin{ex}{\em Compare and contrast: border forcing.} The
  1-dimensional substitution $a\to abb$, $b \to abbb$ forces the
  border in that every $n+1$-supertile of type $a$ is preceded by an
  $n$-supertile of type $b$ and followed by an $n$-supertile of type
  $a$, and likewise every $n+1$-supertile of type $b$ is also prededed
  by an $n$-supertile of type $b$ and followed by an $n$-supertile of
  type $a$.  By contrast, the substitution $a \to ab$, $b \to aa$ does
  not force the border, since an $N$-supertile of type $a$ can be
  preceded either by an $n$ supertile of type $a$ or $b$.
\end{ex}

\subsection{The Anderson-Putnam complex}

To build $\Gamma_0$, we start out with one copy of each prototile from
$\ppp_0$. Then, if somewhere in some tiling two prototiles meet, we
identify the corresponding points on their boundaries. The resulting
branched manifold is compact \cite{AP}.  (If we take the periodic
tiling of unit squares lined up edge-to-edge, it is easy to see that
$\Gamma_0$ is the torus.)  We build $\Gamma_1$ by taking one copy of
each supertile from $\ppp_1$ and identifying the boundaries whenever
they meet as above, and continue making each approximant $\Gamma_n$
similarly.  Put another way, $\Gamma_n$ for the space $X_\rrr$ is
$\Gamma_0$ for $X^n_\rrr$.

There is a natural map from $X_\rrr$ to $\Gamma_n$ that maps a tiling to
the location of the origin within its $n$-supertile. Thus, a point in
$\Gamma_n$ can be viewed as a set of instructions for placing an $n$-supertile
containing the origin.
Or course, if we know the $(n+1)$-supertile containing the origin, then
we necessarily know the $n$-supertile containing the origin, so the 
forgetful map $f_n$ is well-defined. 
 
\begin{thm}\label{border-forcing-homeo}
If the recognizable 
fusion rule $\rrr$ forces the border, then $X_\rrr$ is homeomorphic
to the inverse limit $\ilim (\Gamma_n, f_n)$ of Anderson-Putnam complexes. 
\end{thm}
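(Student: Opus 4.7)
The plan is to build a candidate homeomorphism $\pi: X_\rrr \to \varprojlim(\Gamma_n, f_n)$ by setting $\pi(\T) = (\pi_0(\T), \pi_1(\T), \ldots)$, where $\pi_n(\T) \in \Gamma_n$ records the location of the origin inside the unique $n$-supertile of $\T$ that contains it. Recognizability is exactly what makes each $\pi_n$ well-defined, and the compatibility condition $f_n \circ \pi_{n+1} = \pi_n$ holds by construction, since the $n$-supertile at the origin is the $n$-supertile at the origin within the $(n+1)$-supertile at the origin. Continuity of $\pi_n$ comes from the fact that two tilings that agree on a ball of radius $r_n$ around the origin (the recognizability radius) have the same $n$-supertile at the origin, placed in the same position; hence $\pi$ is continuous into the inverse limit.

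For injectivity, suppose $\pi(\T)=\pi(\T')$. Then for every $n$ the $n$-supertile $S_n$ containing the origin is the same in $\T$ and $\T'$. Border-forcing, applied at each level, lets us enlarge the region of agreement: for each $n$ there is an $N$ such that knowledge of the $N$-supertile at the origin determines the full patch of $n$-supertiles touching it, and these extended patches are also common to $\T$ and $\T'$. Iterating (border-forcing the border) and using the standing assumption that supertiles of sufficiently high level have arbitrarily large inner radii, I would conclude that $\T$ and $\T'$ agree on balls of arbitrary radius around the origin, so $\T=\T'$.

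For surjectivity, given a consistent sequence $(x_0, x_1, \ldots)\in \varprojlim(\Gamma_n, f_n)$, each $x_n$ specifies both an $n$-supertile $S_n$ and a placement of that supertile so that the origin sits in its interior (or on its boundary with the appropriate identifications). Compatibility under $f_n$ gives $S_n \subset S_{n+1}$, and border-forcing guarantees that each $S_n$ extends unambiguously to a patch $S_n^+$ of $n$-supertiles in a neighborhood of $S_n$. These patches are compatible across levels and exhaust $\R^d$, so we obtain a tiling $\T$ with $\pi(\T)=(x_n)$. Finally, since $X_\rrr$ is compact (being closed in the FLC tiling space) and the inverse limit is Hausdorff, the continuous bijection $\pi$ is automatically a homeomorphism.

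The hard part will be the exhaustion of $\R^d$ in the surjectivity step, since a priori a chain $(x_n)$ could have the supertiles $S_n$ all contained in a bounded region or in a half-space, never growing to cover $\R^d$. Here border-forcing is the essential input: combined with the standing inner-radius assumption, it forces the collared patch $S_n^+$ coming from $x_{N(n)}$ to contain a ball around the origin whose radius tends to infinity with $n$. Making this growth argument uniform along an arbitrary chain, rather than for a generic tiling, is the only nontrivial point in the proof; the remaining ingredients are formal consequences of recognizability and the definitions of $\Gamma_n$ and $f_n$.
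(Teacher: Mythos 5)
Your proposal is correct and follows essentially the same route as the paper, which constructs the inverse bijection from chains in $\ilim(\Gamma_n,f_n)$ to tilings and, exactly as you anticipate, settles the exhaustion issue by iterating border forcing: a point of $\Gamma_N$ with $N\ge n_i$ determines the $n_{i-1}$-supertiles touching the $N$-supertile, then the $n_{i-2}$-supertiles touching those, and so on, yielding at least $i-1$ layers of ordinary tiles around the origin. The one small correction is that the growth of the determined region comes from accumulating these nested layers of tiles (of definite thickness, since there are finitely many prototiles), not from the inner radii of the supertiles in the chain, which need not surround the origin.
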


\begin{proof}We will construct a homeomorphism from the inverse limit
  to $X_\rrr$ by constructing maps from each approximant to partial
  tilings of $\R^d$.  Pick an increasing sequence of integers $n_1,
  n_2, \ldots$ such that all the $n_i$-supertiles bounding an
  $n_{i+1}$-supertile are determined by the type of the
  $n_{i+1}$-supertile. Our map takes a point $x_N$ in $\Gamma_N$ to
  the $N$-supertile with the origin where $x_N$ is, together with all
  of the lower order supertiles that are determined by that
  $N$-supertile. If $N \ge n_i$, then this includes all the
  $n_{i-1}$-supertiles touching the $N$-supertile, all the $n_{i-2}$
  supertiles touching the $n_{i-1}$ supertiles, all the $n_{i-3}$
  supertiles touching the $n_{i-2}$ supertiles, and so on.  If $x_N$
  is on the boundary of an $N$-supertile, then there are multiple
  tilings that can come from this process, but they all agree on the
  $n_{i-1}$-supertiles in all directions around the origin, as well as
  the lower-order supertiles determined by the
  $n_{i-1}$-supertiles. In particular, $x_N$ determines at least $i-1$
  layers of supertiles of various sizes around the origin, and so
  determines at least $i-1$ layers of ordinary tiles around the
  origin.  By choosing $N$ large enough, we can get $i$ to be
  arbitrarily large.  Thus as $N \to \infty$ the points in the
  approximants determine tilings of larger and larger balls around the
  origin, so a point in the inverse limit gives a set of consistent
  directions for tiling increasing regions of $\R^d$ whose union is
  all of $\R^d$. Such instructions are clearly in 1:1 correspondence
  with tilings of $\R^d$.  Checking that the topology of $X_\rrr$
  corresponds to the topology of the inverse limit (as a subset of the
  infinite product $\prod \Gamma_n$) is a straightforward exercise
  that is left to the reader.
\end{proof}

\begin{ex} {\em A short \v Cech cohomology computation.} Consider a
  transition-regular fusion rule in one dimension, with two tile types $a$
  and $b$.  Let $P_n(a)=P_{n-1}(a)P_{n-1}(b) P_{n-1}(b)$ and let
  $P_n(b)$ be a permutation of two $P_{n-1}(a)$'s and three
  $P_{n-1}(b)$'s, with the permutation depending on the level. As long
  as a permutation beginning in $P_{n-1}(a)$ occurs infinitely often
  and a permutation ending in $P_{n-1}(b)$ occurs infinitely often,
  this fusion rule forces the border. The approximant $\Gamma_n$
  consists of two intervals, one representing $P_{n}(a)$ and one
  representing $P_n(b)$, with all four endpoints identified to form a
  figure-8. The map $f_n$ wraps the $P_{n+1}(a)$ circle around the
  $P_n(a)$ circle once and then around the $P_n(b)$ circle twice. It also
  wraps the $P_{n+1}(b)$ circle around the $P_n(a)$ circle twice and
  around the $P_n(b)$ circle three times, in an order determined by the
  fusion rule at level $n+1$.  By Theorem \ref{border-forcing-homeo},
  $X_\rrr$ is the inverse limit of these figure-8's under these maps.

  From a \v Cech cohomology standpoint we can see the figure-8 as the
  chain complex of each approximant, so that both $\check
  H_1(\Gamma_n)$ and $\check H^1(\Gamma_n)$ are isomorphic to $\Z^2$.
  The first \v Cech cohomology of the inverse limit (and of $X_\rrr$)
  is the direct limit of $\Z^2$ under the pullback of the maps $f_n$,
  which always come out to be $\left ( \begin{smallmatrix} 1&2 \cr
      2&3 \end{smallmatrix}\right )$ even though the order for the
  $P_n(b)$'s varies.  Since that matrix is invertible over $\Z$, we
  see that $\check H^1(X_\rrr)=\Z^2$.
\end{ex}

\subsection{Anderson-Putnam collaring}

Most fusion rules, like most substitution rules, do not force the
border.  However there is a simple trick, due to Anderson and Putnam
in the setting of substitutions, for replacing an arbitrary fusion
rule $\rrr_0$ with a hierarchical rule $\rrr$ that forces the border, such
that $X_{\rrr_0}$ is homeomorphic (and topologically conjugate) 
to $X_\rrr$.  We can then express our original tiling space
$X_{\rrr_0}$ as the inverse limit of the Anderson-Putnam complexes of
$\rrr$.

A {\em collared tile to distance $r$}, or an $r$-collared tile, is a
tile together with a label that describes the types and relative
positions of all of that tile's neighbors out to distance $r$.  For
instance, in a 1-dimensional tiling with patch $abbaba$, the three
$b$'s are all different as collared tiles to distance 1, as one is
preceded by an $a$ and followed by a $b$, one is preceded by a $b$ and
followed by an $a$, and one is both preceded and followed by $a$'s.
Likewise, a collared $n$-supertile to distance $r$ is an
$n$-supertile, together with a label indicating the pattern of nearby
$n$-supertiles out to distance $r$.

Take an infinite increasing sequence of radii $r_0 < r_1 < \cdots$,
tending to infinity.  We take $\ppp_n$ 
to be the set of collared $n$-supertiles to distance $r_n$.
Clearly, any complete tiling can be
equally well-described in terms of (super)tiles or collared
(super)tiles.  However, by construction, $\rrr$ forces the border,
since if $r_N$ is greater than $r_n$ plus the diameter of the largest
$n$-supertile, then a collared $N$-supertile determines its
surrounding $r_n$-collared $n$-supertiles.

Note that the label of a collared $n$-supertile contains information about
all the neighboring $n$-supertiles out to distance $r_n$, and in particular
determines all of the neighboring $(n-1)$-supertiles out to distance $r_{n-1}$.
This means that each collared $n$-supertile is uniquely decomposed as a union
of collared $(n-1)$-supertiles, and gives a well-defined map from $X^n_\rrr$ to 
$X^{n-1}_\rrr$. The hierarchical rule $\rrr$ is a generalized fusion in
the sense of Footnote \ref{labeled.supertiles},
since the $(n-1)$-supertiles contained in an $n$-supertile do
not determine the $n$-supertile. The collared $n$-supertiles have strictly
more information than the collared $(n-1)$-supertiles, which is the whole
point of collaring!

If the shapes of the supertiles are not too wild, one can pick
the $r_n$'s to grow slowly compared to the size of the supertiles, so
that collaring to distance $r_n$ just means specifying the nearest
neighbors of the $n$-supertile, as is usually done for substitution
tilings.  However, for some fusion rules it is possible that knowing
the $n$-supertiles containing the origin and the ones touching this
supertile, for all $n$, will not determine the tiling of all of
$\R^d$.

The process of collaring does have its drawbacks, as $\rrr$ may not
have the same transition-regularity or even prototile-regularity
properties as $\rrr_0$.  Collaring increases the number of tile types,
and there is no reason to expect the increase to be the same at all
levels. Indeed, even if the number of uncollared supertiles is
uniformly bounded it is entirely possible that the number of collared
$n$-supertiles will grow without bound as $n \to \infty$. This happens,
for instance, in Example \ref{nonpisot}.

\subsection{Barge-Diamond collaring}

The idea behind Barge-Diamond collaring \cite{BD, BDHS}
is to collar points rather
than tiles. As before, pick an increasing sequence of radii $r_0 <
r_1< \cdots$ tending to infinity. Take a tiling $\T$, and identify
points $\vec x$ and $\vec y$ if $[B_{r_0}]^{\T-\vec x}= [B_{r_0}]^{\T-\vec y}$. 
That is, if
the tiling looks the same around $\vec x$ and $\vec y$ to distance $r_0$ (with
$\vec x$ and $\vec y$ playing corresponding roles).  Let $\Gamma_0$ be the
quotient space. To get $\Gamma_1$, identify points for which the
corresponding tiling in $X^1_\rrr$ agrees to distance $r_1$. That is,
points $\vec x$ and $\vec y$ for which all of the 1-supertiles that exist
within a distance $r_1$ of $\vec x$ and $\vec y$ agree. Likewise, $\Gamma_n$ is
$\R^d$ modulo identification of points $\vec x$ and $\vec y$ for which all of
the $n$-supertiles within distance $r_n$ of $\vec x$ and $\vec y$ agree.

As before, we have a map from $X_\rrr$ to $\Gamma_n$, taking a tiling
to a description of how a ball of radius $r_n$ around the origin sits
in one or more $n$-supertiles. Since $r_n \to \infty$ as $n \to
\infty$, a point in the inverse limit is a consistent set of
instructions for tiling all of $\R^d$, so the inverse limit is
homeomorphic to $X_\rrr$ as long as the orbit closure of $\T$ is $X_\rrr$ (which is always the case when $X_\rrr$ is minimal).

When the fusion is asymptotically self-similar or self-affine, 
one can take $r_n$ to be much smaller than
the size of an $n$-supertile, but still to go to infinity. For
2-dimensional tilings, this means that there are three kinds of
points.  Most points are farther than $r_n$ from the nearest $n$-supertile
boundary. These points are identified with corresponding points of
other $n$-supertiles, without regard for the supertile's neighbors.
Some points are within $r_n$ of one of the supertile's edges. These
points are identified with corresponding points of other
$n$-supertiles that have the same $n$-supertile neighbor across the
specific edge.  Finally, some points are within $r_n$ of two or more
edges, and hence are close to a vertex. 
There is a stratification of $\Gamma$ into
points-near-vertices, points-near-edges, and interior points, and this
stratification makes for much easier computations of tiling cohomology
than Anderson-Putnam collaring.

\section{Direct product variations}
\label{Examples}
An easy way to make higher-dimensional substitution sequences is to
take the direct product of two or more one-dimensional substitutions.
To break the direct product structure, one can rearrange the
substitution carefully so that at each stage the blocks still fit,
creating what is called a {\em direct product variation} or {\em DPV}.
Introduced as examples of combinatorial substitutions in
\cite{My.primer}, DPVs are quite flexible when viewed as examples of
fusion rules.
\begin{ex}{\em The Fibonacci DPV.}\label{FibDPV}
This simple example of a prototile- and transition-regular fusion rule
in two dimensions is based on the Fibonacci substitution $0 \to 01, 1 \to 0$.
We use it to illustrate almost all of the ideas and computations discussed
for fusion rules.

The prototile set consists of four unit-square tiles with label set
$\{a, b, c, d\}$ and so $\ppp_0 =
\left\{\raisebox{-.2cm}{\includegraphics[width=.6cm]{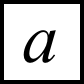}},
\raisebox{-.2cm}{\includegraphics[width=.6cm]{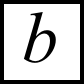}},
\raisebox{-.2cm}{\includegraphics[width=.6cm]{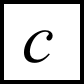}},
\raisebox{-.2cm}{\includegraphics[width=.6cm]{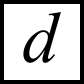}}\right\}$.\footnote{There is some flexibility
with the geometry of the prototiles.  They could be parallelograms or rectangles, and
there are two vertical and two horizontal degrees of freedom for the lengths of the sides.}  

For the $1$-supertiles we choose $\ppp_1 =
\left\{\raisebox{-.4cm}{\includegraphics[width=1.2cm]{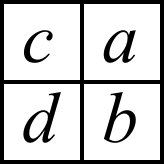}},
\raisebox{-.4cm}{\includegraphics[width=1.2cm]{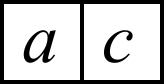}},
\raisebox{-.4cm}{\includegraphics[width=.6cm]{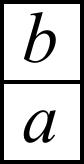}},
\raisebox{-.4cm}{\includegraphics[width=.6cm]{fibprototile-a.png}}
\right\},$ where we list the supertiles in the obvious order
$\{P_1(a), P_1(b), P_1(c), P_1(d)\}$.  To make the $2$-supertiles we
concatenate the $1$-supertiles in combinatorially the same way:
$$ \ppp_2 = \left\{\raisebox{-.8cm}{\includegraphics[width=1.8cm]
{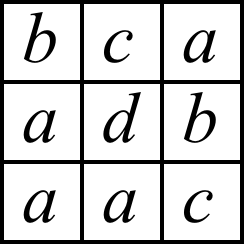}},\raisebox{-.8cm}{\includegraphics[width=1.8cm]
{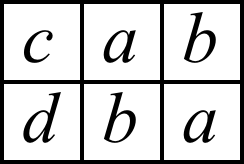}},\raisebox{-.8cm}{\includegraphics[width=1.2cm]
{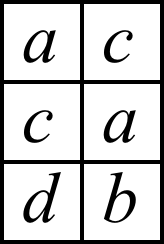}},
  \raisebox{-.8cm}{\includegraphics[width=1.2cm]{fiblevel1-a.png}}\right\}
=
\left\{\raisebox{-.8cm}{\includegraphics[width=2.05cm]{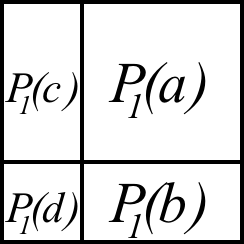}},
\raisebox{-.8cm}{\includegraphics[width=2.05cm]{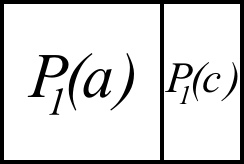}},
\raisebox{-.8cm}{\includegraphics[width=1.2cm]{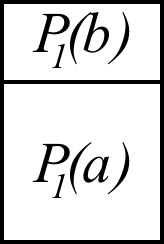}},
\raisebox{-.8cm}{\includegraphics[width=1.2cm]{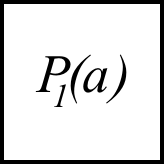}}
\right\}
$$
In general we construct $\ppp_{n+1}$ from $\ppp_n$ with exactly the
same combinatorics as the rightmost version of the $2$-supertiles
shown above.  It is not difficult to show that the sides of $P_n(a)$
and the long sides of $P_n(b)$ and $P_n(c)$ are the Fibonacci numbers
$f_{n+2}$, while the sides of the $n$-supertile of type $d$ and the
short sides of the $b$ and $c$ supertiles are the Fibonacci numbers
$f_{n+1}$ (using the convention that $f_0 = 0$ and $f_1 = 1$).  This
means that at each stage, the supertiles fit together to form squares
and rectangles with Fibonacci side lengths.

Recognizability is straightforward and proceeds by induction. 
The $P_{n+1}(a)$ supertiles are determined by the presence of a $P_n(d)$,
each $P_{n+1}(b)$ is determined by a $P_n(c)$ that is not in a $P_{n+1}(a)$,
each $P_{n+1}(c)$ is determined by a $P_n(b)$ that is not in a $P_{n+1}(a)$,
and each remaining $P_n(a)$ is a $P_{n+1}(d)$.  

The Fibonacci DPV is transition-regular with $M =M_{n-1,n} =
\left(\begin{smallmatrix}
    1&1&1&1\\1&0&1&0\\1&1&0&0\\1&0&0&0 \end{smallmatrix} \right)$. This is
a primitive matrix so $M_{n,N}=M^{N-n}$ is asymptotically rank 1; the dynamical
system is uniquely ergodic.  The Perron-Frobenius eigenvalue is
$\phi^2$, where $\phi$ is the golden mean; this number represents the
asymptotic volume expansion of the supertiles from level to level.

To compute the ergodic measure, it suffices to compute the 
frequencies of the $n$-supertiles and then use equation
(\ref{measure.from.frequency}) of Theorem \ref{measures.equiv.rns}
to get the frequencies of arbitrary
patches.  The vectors $\rho_n$ are the volume-normalized directions of the asymptotic
columns of $M_{n,N} =M^{N-n}$, and thus they are collinear with the right Perron-Frobenius
eigenvector of $M$, which is $\left ( \begin{smallmatrix} \phi^2 \cr  \phi \cr  \phi \cr  1 \end{smallmatrix} \right )$.
Since we have chosen unit square
prototiles, the volumes of the $n$-supertiles are $f_{n+2}^2, (f_{n+2}f_{n+1}), (f_{n+2}f_{n+1}),$ and $ 
f_{n+1}^2$ respectively.   We compute
$\rho_n = \phi^{-(2n+4)} \left ( \begin{smallmatrix}
\phi^2 \cr  \phi \cr  \phi \cr  1 \end{smallmatrix} \right )$.

Next we turn to computing the topological and measure-theoretic
spectrum.  Technically we should take the induced fusion rule that
composes two levels at once to get strong primitivity, then go up
two more levels at a time to find all of the return vectors in
$\vvv^n$.  Fortunately, in this example there
are always return vectors of the form $(f_n, 0)$ and $(0, f_n)$ (see
Figure \ref{fib.returns}).
\begin{figure}[ht]
\includegraphics[width=3.5cm]{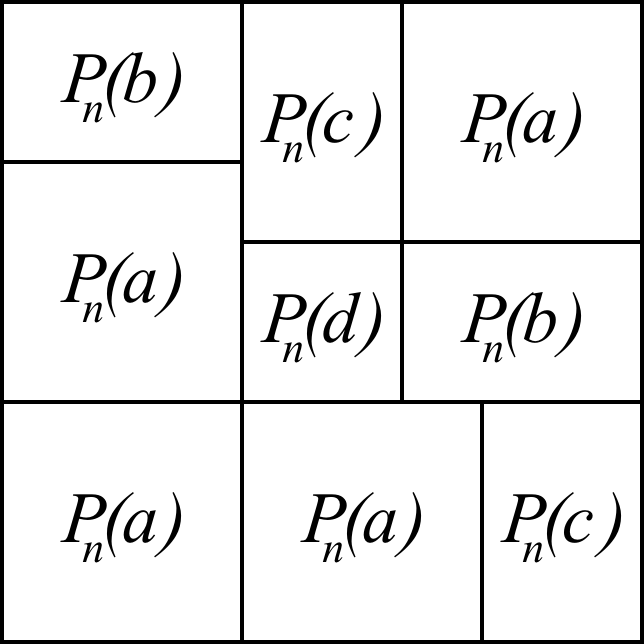}
\caption{The induced fusion for $P_{n+2}(a)$.}
\label{fib.returns}
\end{figure}
Any eigenvalue ${\vec\alpha} = (\alpha_1,\alpha_2)$, topological or
measure-theoretic, must have the property that $\lim_{n \to \infty}
{\vec\alpha} \cdot \vecv_n = 0 \pmod 1$, and this means that
$\lim_{n \to \infty} \alpha_k f_n = 0 \pmod 1$ for $k =
1,2$.  Pisot's theorem then implies that ${\vec\alpha} \in \Z[\phi] \times
\Z[\phi]$\footnote{The absence of the $\sqrt{5}$ that is present in
  Example \ref{scrambled.fib.1d} is due to the integer size of the
  prototiles. }.  For such $\vec \alpha$, the criterion for
topological eigenvalues in Theorem \ref{eig-criterion} is satisfied
because the convergence of $\eta_n({\vec\alpha})$ to 0 is exponential.
In short, the computations for both topological and measure-theoretic
eigenvalues are exactly the same as for the product of two Fibonacci
tilings (built from unit length prototiles), and we have that both
eigenvalue sets are $\Z[\phi]\times \Z[\phi]$. In particular, any
measurable eigenfunction can be chosen to be continuous.

Next we compute the \v Cech cohomology of $X_\rrr$. We use
Barge-Diamond collaring \cite{BD,BDHS}, picking the collaring radius $r_n$ to
grow slowly with $n$.  We stratify our tiling space into three pieces
$\Xi_0 \subset \Xi_1 \subset \Xi_2 = X_\rrr$. $\Xi_2$ is the entire
tiling space, $\Xi_1$ is the set of tilings where the origin is within
$r_n$ of the boundary of an $n$-supertile for every $n$, and $\Xi_0$
is the set of tilings where the origin is within a distance $r_n$ of
two supertile edges, and hence is near a supertile corner. The cohomology
of $\Xi_0$ is the cohomology of a CW complex with one cell for each 
possible pattern by which three or more 
high-order supertiles can meet at a vertex. There are 78 
such patterns. The relative cohomology of the pair $(\Xi_1, \Xi_0)$
is computed from a CW complex containing 52 cells that describe the ways that
two supertiles can meet along a common edge. The relative cohomology
of the pair $(\Xi_2, \Xi_1)$ is computed from the matrix $M$. The techniques
for generating these cells and computing the cohomology are similar to 
those for substitution tilings, and yield 
\begin{eqnarray*}
 \check H^0(\Xi_0)=\Z; \qquad & \check H^1(\Xi_0)=0; \qquad & \check H^2(\Xi_0)
=\Z^{42} \\
\check H^0(\Xi_1, \Xi_0) = 0; \qquad & \check H^1(\Xi_1, \Xi_0)= \Z^4; 
\qquad & \check H^2(\Xi_1, \Xi_0)=\Z^{18} \\
\check H^0(\Xi_1) = \Z; \qquad & \check H^1(\Xi_1)= \Z^4; 
\qquad & \check H^2(\Xi_1)=\Z^{60} \\
\check H^0(\Xi_2, \Xi_1) = 0; \qquad & \check H^1(\Xi_2, \Xi_1)= 0; 
\qquad & \check H^2(\Xi_2, \Xi_1)=\Z^{4} \\
\check H^0(X_\rrr) = \Z; \qquad & \check H^1(X_\rrr)= \Z^4; 
\qquad & \check H^2(X_\rrr)=\Z^{64}.
\end{eqnarray*}

The generators of $\check H^1(X_\rrr)=\Z^4$ are easily described. Pick a value
of $n \ge 4$.  Each edge of an $n$-supertile either has length $f_{n+1}$
or $f_{n+2}$. The first generator counts the horizontal edges of the first type,
the second generator counts the horizontal edges of the second type, and 
the third and fourth generators similarly count vertical edges. The boundaries
of two supertiles may overlap on intervals of size $f_n = f_{n+2}-f_{n+1}$ or 
$f_{n-1} = 2 f_{n+1} - f_{n+2}$. The first (or third) 
generator assigns the numbers $-1$
and $2$ to these partial edges, while the second (or fourth) 
assigns the numbers $1$ and $-1$, as these are the coefficients of $f_{n+1}$
and $f_{n+2}$.  Picking different values of $n$ gives different generators, but
the group they generate is the same.

Deformations of a tiling, by changing the shape and size of (possibly
collared) tiles, are parametrized up to mutual local derivability by $\check
H^1(X_\rrr, \R^d)$ \cite{Clark-Sadun2}.   For the Fibonacci
DPV, $\check H^1$ is the same as for the product of two 1-dimensional Fibonacci
tiling spaces, and the deformations are the same. Thus, {\em any}
deformation of the sizes and shapes yields a tiling space that is topologically
conjugate to a linear transformation of $\R^2$ applied to the original tiling
space. In particular, a self-similar version of the DPV, in which the
$a$, $b$, $c$, and $d$ tiles have dimensions $\phi \times \phi$, $\phi
\times 1$, $1\times \phi$ and $1\times 1$, is topologically conjugate
to a DPV where all tiles are congruent squares (of side $\sqrt{5}/\phi$).

The difference between the Fibonacci DPV and the product of two 1-dimensional
Fibonacci tilings is seen in the second \v Cech cohomology, where that of 
the DPV has rank 64 and that of the product has rank 4. The rank of the 
top cohomology is closely related to the independent appearance of patterns
in the tiling, via the following theorem:

\begin{thm}[\cite{Exact.regularity}]
If the rank of $\check H^d$ of a $d$-dimensional tiling space is $k$, then
there exist $k$ patterns $P_1, \ldots, P_k$, such that for any patch $P$
there exist rational numbers $c_1, \ldots, c_k$ and $c_P$ 
such that, for any region $R$ in any tiling $T$, 
$$ \#(P\text{ in }R) = \sum_{i=1}^k c_i \#(P_i \text{ in } R) + 
e(P,R),
$$
where the error term $e(P,R)$ is computable from the patterns on the boundary
of $R$, and is bounded by $c_P$ times the $(d-1)$-volume of the boundary
of $R$. 
\end{thm}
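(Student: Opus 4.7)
The plan is to prove the theorem by representing patch counts as pairings between pattern-equivariant cochains and top-dimensional chains in the Anderson-Putnam approximants. Concretely, for each patch $P$ pick a reference point in $P$ and define a $\Z$-valued pattern-equivariant $d$-cochain $\xi_P$ that assigns to each $d$-cell of (a sufficiently high-order) $\Gamma_n$ the number of copies of $P$ whose reference point lies in that cell. By pattern-equivariant \v Cech cohomology (Kellendonk--Putnam), $\xi_P$ descends to a class $[\xi_P]\in\check H^d(X_\rrr;\Z)$, and $\langle\xi_P,[R]\rangle$ equals $\#(P\text{ in }R)$ up to corrections coming from copies of $P$ whose reference point lies in $R$ but whose support exits $R$ (or vice versa). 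Because $P$ has finite diameter, these corrections are bounded by a constant $c_P^{(0)}$ times $\mathrm{vol}_{d-1}(\partial R)$ and are locally determined by the tiling near $\partial R$.

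Now use the hypothesis that $\check H^d(X_\rrr)$ has rank $k$. Pick patches $P_1,\dots,P_k$ such that $[\xi_{P_1}],\dots,[\xi_{P_k}]$ form a $\Q$-basis of $\check H^d(X_\rrr;\Q)$; this is possible because the classes $\{[\xi_P]\}$ span $\check H^d(X_\rrr;\Q)$ (every top-dimensional class in an approximant comes from a top cell, hence from some patch). Given any patch $P$, solve the linear system
\begin{equation*}
[\xi_P]=\sum_{i=1}^{k} c_i\,[\xi_{P_i}]\quad\text{in }\check H^d(X_\rrr;\Q),
\end{equation*}
obtaining unique rationals $c_1,\dots,c_k$. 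Cohomological equality means that $\xi_P-\sum_i c_i\xi_{P_i}=\delta\eta$ for some pattern-equivariant $(d-1)$-cochain $\eta$ on some approximant $\Gamma_N$ (after clearing denominators inside $\eta$).

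Pair this coboundary with the chain $[R]$: by the combinatorial Stokes identity, $\langle\delta\eta,[R]\rangle=\langle\eta,\partial R\rangle$. Therefore
\begin{equation*}
\#(P\text{ in }R)-\sum_{i=1}^{k} c_i\,\#(P_i\text{ in }R)=\langle\eta,\partial R\rangle+\text{(reference-point boundary corrections)}.
\end{equation*}
Both terms on the right are pattern-equivariant functions of the boundary patch alone, so the total error $e(P,R)$ depends only on the tiling in a fixed-radius neighborhood of $\partial R$. Since $\eta$ is pattern-equivariant on a finite complex its sup-norm is finite, and the number of boundary $(d-1)$-cells is at most a uniform constant times $\mathrm{vol}_{d-1}(\partial R)$, yielding a bound $|e(P,R)|\le c_P\,\mathrm{vol}_{d-1}(\partial R)$ as required.

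The principal obstacle is the bookkeeping connecting the geometric count $\#(P\text{ in }R)$ with the algebraic pairing $\langle\xi_P,[R]\rangle$: one must fix a convention for which cell ``contains'' a copy of $P$, verify that discrepancies between this convention and true containment in $R$ are localized near $\partial R$, and ensure the resulting boundary contributions combine with $\langle\eta,\partial R\rangle$ into a single function of the boundary alone. A secondary subtlety is rationality of the $c_i$, which is immediate from solving a $\Z$-linear system over $\Q$, but depends on the $[\xi_{P_i}]$ being chosen as a $\Q$-basis rather than merely a $\Z$-basis of $\check H^d(X_\rrr;\Z)/\text{torsion}$.
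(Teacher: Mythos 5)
This theorem is imported from \cite{Exact.regularity}; the present paper states it without proof, so the only meaningful comparison is with the argument in that reference, and yours is essentially it. Representing patch counts as pattern-equivariant $d$-cochains $\xi_P$, noting that such classes span $\check H^d(X_\rrr;\Q)$ so that rank $k$ lets you pick $P_1,\ldots,P_k$ with $[\xi_P]=\sum_i c_i[\xi_{P_i}]$ over $\Q$, and then converting the coboundary $\delta\eta$ into a boundary term $\langle\eta,\partial R\rangle$ bounded by $\|\eta\|_\infty$ times the number of boundary cells is exactly the intended mechanism, and your treatment of the reference-point conventions and the rationality of the $c_i$ is sound.
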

We call $P_1, \ldots, P_k$ {\em control patches}. 
For the product of two 1-dimensional Fibonacci tilings, we can take our
control patches to be the four basic tiles. 
For the Fibonacci DPV, however,
there are 60 additional control patches. They can be chosen from the generators
of $\check H^2(\Xi_0)$ and $\check H^2(\Xi_1, \Xi_0)$. That is, we have
9 control patches that involve supertiles meeting along horizontal edges,
9 that involve supertiles meeting along vertical edges, and 42 that involve
three or four supertiles meeting at a vertex. 
\end{ex}

\begin{ex} 
{\em A scrambled Fibonacci DPV.} We can construct a scrambled 
version of the Fibonacci DPV in much the same way as the 1-dimensional
scrambled Fibonacci tiling of Example \ref{scrambledFib}. We pick
an increasing sequence $N(n)$ and induce on this sequence to get an 
accelerated scrambled Fibonacci rule $\aaa$. 
We then introduce an exceptional supertile $\eee_n(e)$
at each odd level, whose population in terms of $(n-1)$-supertiles is the
same as $\aaa_n(d)$, but rearranged so that all of the $\eee_{n-1}(a)$ tiles
appear in the lower left corner, all the $\eee_{n-1}(b)$ appear in the lower
right, all the $\eee_{n-1}(c)$ appear in the upper left, and all the 
$\eee_{n-1}(d)$ appear in the upper right. On even levels, the $n$-supertiles
are built from the $(n-1)$-supertiles exactly as for the accelerated DPV, 
only with one $\eee_{n-1}(d)$ in each $n$-supertile replaced by an 
$\eee_{n-1}(e)$. Finally, we induce on even levels to obtain a prototile-regular
fusion $\sss$.

As before, if we choose the sequence $N(n)$ to grow sufficiently fast,
and if we give the prototiles the same shape as the asymptotic
supertiles, with the $a,b,c,d$ prototiles having dimensions
$\phi \times \phi$, $\phi
\times 1$, $1\times \phi$ and $1\times 1$, then the scrambled Fibonacci DPV
space is topologically weakly mixing but has pure point measurable spectrum,
being measurably conjugate to the unscrambled Fibonacci DPV.  
However, if we choose the prototiles to be unit squares, then every
$\alpha \in \Z \times \Z$ is manifestly a topological eigenvalue. 

This discrepancy means that the deformation theory for the scrambled
DPV is {\em not} the same as for the unscrambled DPV. Either the first
cohomologies are different, or, more likely, the cohomologies are
isomorphic but the two tilings have different ``asymptotically
negligible'' \cite{Clark-Sadun2} subspaces of $\check H^1(X, \R^2)$
that describe deformations that are topological conjugacies but that are 
not mutually locally derivable from the original. 
As for $\check H^2$, the rank must be
at least 64, since the control patches for the Fibonacci DPV are still
present in the scrambled DPV.
\end{ex}

This example suggests two directions for future work. One is to
understand deformation theory better, and in particular the role of
the asymptotically negligible classes. These are well-understood for
substitution tilings, but not for fusions. Another is to develop new
techniques for computing tiling cohomology for spaces that do not come from
substitutions. The Anderson-Putnam and Barge-Diamond complexes were 
defined in Section 5 for all tilings, but almost every
existing method for studying these complexes relies on 
an underlying substitution. 

\begin{ex}{\em A non-Pisot DPV.} \label{nonpisot} We base this DPV on
  the one-dimensional substitution $a \to abbb, b \to a$, which
  despite its apparent similarity to the Fibonacci DPV exhibits
  significantly different dynamical behavior.  The prototile set is
  the same as for the Fibonacci DPV, and again we choose the fusion to
  be both prototile- and transition-regular.  This time we choose our
  fusion rule at each stage to be given by $$\ppp_{n+1} =
  \left\{\raisebox{-2cm}{\includegraphics[width=4.2cm]{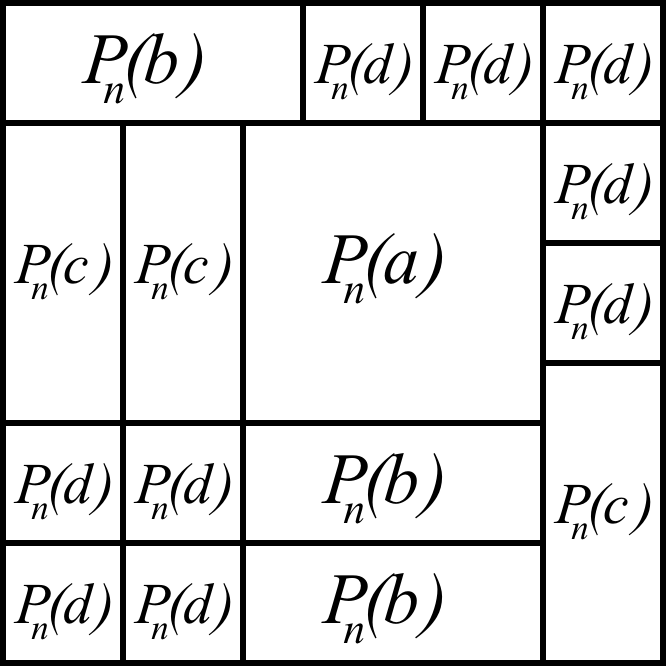}},
\raisebox{-1cm}{\includegraphics[width=4cm]{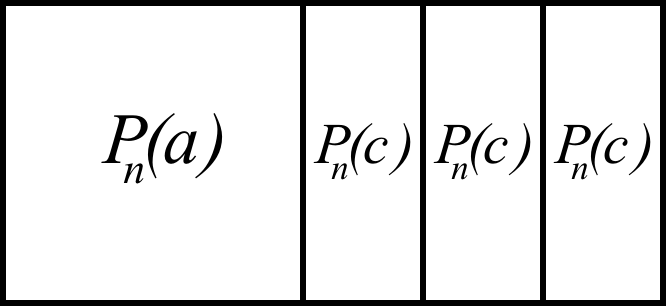}},
\raisebox{-2cm}{\includegraphics[width=1.8cm]{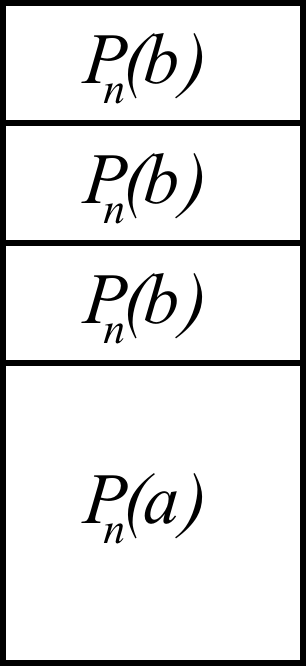}},
\raisebox{-1cm}{\includegraphics[width=1.8cm]{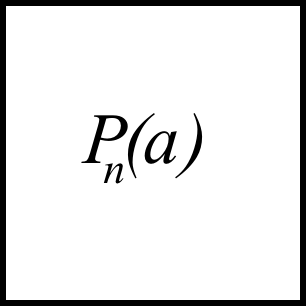}}\right\}$$
Recognizability is easily established, almost exactly as with the 
Fibonacci DPV. 

  The transition matrix has $((1 + \sqrt{13})/2)^2$ as its largest
  eigenvalue, which is not a Pisot number.  The side lengths of the
  supertiles grow as solutions to the recursion $l_{n+1} = l_n + 3s_n,
  s_{n+1} = l_n$, which are nontrivial
  linear combinations of $((1 \pm \sqrt{13})/2)^n$.  Since both of
  those numbers are greater than one in modulus, the side lengths are
  not well-approximated by powers of the positive eigenvalue.  The
  effect this has on the system is profound.  It means that the
  combinatorics of the fusion tilings are exceptionally complicated,
  in that the number of ways that $n$-supertiles can be adjacent to
  one another grows without bound as $n \to \infty$.  

This increasing complexity with scale shows up in the topology of $X_\rrr$.
Both $\check H^1$ and $\check H^2$ are infinitely generated, the
first indicating that there are infinitely many ``interesting'' deformations
of size and shape, and the second indicating that there are infinitely 
many control patterns.
Unlike the Fibonacci DPV, changes in tile sizes, while preserving
the fusion rule, can change the dynamics and in fact the topology of the 
tiling space. 
If we were to choose irrationally related side lengths for our
  prototiles, then the resulting tiling would not have finite local
  complexity \cite{Me.Robbie}.
\end{ex}

Examples \ref{FibDPV} and \ref{nonpisot} lead us to a discussion of 
the combinatorial
and geometric behavior of supertiles as $n \to \infty$.
In some cases one or the other will approach a limit as $n\to\infty$.  
Consider a prototile-regular fusion rule, and
suppose that there
is some invertible linear map $L: \R^d \to \R^d$ such that $\lim_{n
  \to \infty} L^{-n} P_n(p)$ exists for each prototile type.  If in
addition the combinatorics of how the $(n-1)$-supertiles lie
inside their $n$-supertiles stabilizes for large values of $n$,
then we call the fusion rule {\em asymptotically self-affine} (or
-similar if $L$ is a similarity).   This means that there is a self-affine
tiling that is related to the fusion tiling.   The precise nature of the
relationship varies, and no general theorems about it are known to the
authors at this time.    Both of the previous examples are asymptotically
self-similar, with the limiting prototile sets having edge lengths in the
ratios $\phi:1$ in the Fibonacci case and $(1+ \sqrt{13}):2$ in the non-Pisot
case.

A fusion tiling may have finite local complexity in the usual sense
while failing to be locally finite in an asymptotic sense. We call a
fusion rule {\em asymptotically FLC} if there is a constant $B$
such that each pair of $n$-supertiles can form at most $B$ connected
two-supertile patches. Example \ref{FibDPV} is asymptotically FLC, but
Example \ref{nonpisot} is not.  If an asymptotically self-affine tiling
is not asymptotically FLC, then the
self-affine
tiling obtained from the limiting shapes will have infinite local complexity.

\end{document}